\numberwithin{equation}{section}
\def\titlerunning#1{\gdef\titrun{#1}}
\def\author#1{\gdef\autrun{\def\and{\unskip, }#1}\gdef\@author{#1}}
\def\address#1{{\def\and{\\\hspace*{18pt}}\renewcommand{\thefootnote}{}%
\footnote {#1}}%
\markboth{\autrun}{\titrun}}
\def\email#1{e-mail: #1}
\def\subjclass#1{{\renewcommand{\thefootnote}{}%
\footnote{\emph{Mathematics Subject Classification (2010):} #1}}}
\def\keywords#1{\par\medskip
\noindent\textbf{Keywords.} #1}
\theoremstyle{plain}
\newtheorem{Thm}{Theorem}[section]
\newtheorem{Lem}[Thm]{Lemma}
\newtheorem{Cor}[Thm]{Corollary}
\newtheorem{Prop}[Thm]{Proposition}
\newtheorem*{Thm*}{Theorem}
\newtheorem*{claim*}{Claim}
\newtheorem*{Cor*}{Corollary}
\newtheorem*{Ques*}{Question}
\newtheorem*{Prob*}{Problem}
\newtheorem*{OProb*}{Open Problem}
\theoremstyle{definition}
\newtheorem*{Def*}{Definition}
\newtheorem{Rem}[Thm]{Remark}
\DeclareMathOperator{\real}{Re}
\DeclareMathOperator{\vol}{vol}
\DeclareMathOperator{\supp}{supp}
\DeclareMathOperator{\Vol}{Vol}
\DeclareMathOperator{\Spin}{Spin}
\DeclareMathOperator{\SO}{SO}
\DeclareMathOperator{\Aut}{Aut}
\newcommand{\equ}{equation}
\newcommand{\R}{\mathbb{R}}
\newcommand\ch{\mathcal{H}}
\newcommand\cj{\mathcal{J}}
\newcommand\ck{\mathcal{K}}
\newcommand\cl{\mathcal{L}}
\newcommand\cm{\mathcal{M}}
\newcommand\cs{\mathcal{S}}
\newcommand{\inp}[2]{\left\langle#1,#2\right\rangle}
\newcommand{\normm}[1]{{\left\vert\kern-0.25ex\left\vert\kern-0.25ex\left\vert #1 
		\right\vert\kern-0.25ex\right\vert\kern-0.25ex\right\vert}}
\def\mbs{\mathbb{S}}
\def\msn{\mathscr{N}}
\def\msp{\mathscr{P}}
\def\ig{\textit{g}}
\def\ov{\overline}
\def\pa {\partial}
\def\op{\oplus}
\def\De{\Delta}
\def\ka{\kappa}
\def\al{\alpha}
\def\bt{\beta}
\def\de{\delta}
\def\Ga{\Gamma}
\def\ga{\gamma}
\def\lm{\lambda}
\def\La{\Lambda}
\def\om{\omega}
\def\sa{\sigma}
\def\vr{\varepsilon}
\def\va{\varphi}
\begin{document}
	
\titlerunning{On the B\"ar-Hijazi-Lott invariant}

\title{On the B\"ar-Hijazi-Lott invariant for the Dirac operator and a spinorial proof of the Yamabe problem}

\author{Yannick Sire \quad \& \quad Tian Xu}

\date{}

\maketitle

\address{
	Y. Sire: Department of Mathematics, Johns Hopkins University, 3400 N. Charles Street, Baltimore, Maryland 21218;
	\email{ysire1@jhu.edu}
	\and
	T. Xu: Department of Mathematics, Zhejiang Normal University, Jinhua, Zhejiang, 321004, China; \email{xutian@amss.ac.cn} 
}

\subjclass{Primary 53C27; Secondary 35R01}

\begin{abstract}

Let $M$ be a closed spin manifold of dimension $m\geq6$ equipped with a Riemannian metric
$\ig$ and a spin structure $\sa$.  Let $\lm_1^+(\tilde\ig)$ be the smallest positive eigenvalue of the Dirac operator $D_{\tilde\ig}$ on $M$ with respect to a metric $\tilde\ig$ conformal to $\ig$. The B\"ar-Hijazi-Lott invariant is defined by $\lm_{min}^+(M,\ig,\sa)=\inf_{\tilde\ig\in[\ig]}\lm_1^+(\tilde\ig)\Vol(M,\tilde\ig)^\frac{1}{m}$. In this paper, we show that
\[
\lm_{min}^+(M,\ig,\sa)<\lm_{min}^+(S^m,\ig_{S^m},\sa_{S^m})=\frac m2\Vol(S^m,\ig_{S^m})^{\frac1m}
\]
provided that $\ig$ is not locally conformally flat.  This estimate is a spinorial analogue to an estimate by
T. Aubin, solving the Yamabe problem in this setting.

	\vspace{.5cm}
	\keywords{Dirac operator; Spinorial Yamabe equation;  conformal invariant.}
\end{abstract}


\section{Introduction}

A central question in the study of geometric operators is that of how much information is needed to estimate the eigenvalues of an operator. Among the most important operators are the conformal Laplacian and the Dirac operator. On an $m$-dimensional closed Riemannian manifold $(M,\ig)$, $m\geq3$, we denote the first eigenvalue of the conformal Laplacian $L_\ig:=4\frac{m-1}{m-2}\De_\ig+S_\ig$ (also called the Yamabe operator) by $\lm_1(L_\ig)$. Let $[\ig]$ be the collection of all metrics conformally equivalent to $\ig$, we define
\begin{\equ}\label{Yamabe invariant}
Y(M,\ig):=\inf_{\tilde\ig\in[\ig]}\lm_1(L_{\tilde\ig})\Vol(M,\tilde\ig)^{\frac2m}.
\end{\equ}
This quantity plays a crucial role in the analysis of the famous Yamabe problem. If $(M,\ig)$ is not conformally equivalent to the standard sphere $(S^m,\ig_{S^m})$, it is proven that
\begin{\equ}\label{Yamabe-inequ}
Y(M,\ig)<Y(S^m,\ig_{S^m})=m(m-1)\om_m^{\frac2m},
\end{\equ}
where $\om_m$ stands for the volume of the standard sphere $S^m$. This estimate was the final step in solving the Yamabe problem (cf. \cite{Aubin, Schoen, Trudinger, Yamabe} and \cite{LeeParker} for a good overview).

In the setting of spin geometry there exists a conformally covariant operator, the Dirac operator, which enjoys analogous properties to the conformal Laplacian. This operator was formally introduced by M.F. Atiyah in 1962 in connection with his elaboration of the index theory of elliptic operators.

Let $(M,\ig,\sa)$ be an $m$-dimensional closed spin manifold, $m\geq2$, with a fixed Riemannian metric $\ig$ and a fixed spin structure $\sa:P_{\Spin}(M)\to P_{\SO}(M)$. The Dirac operator $D_\ig$ is defined in terms of a representation $\rho:\Spin(m)\to\Aut(\mbs_m)$ of the spin group which is compatible with Clifford multiplication. Let $\mbs(M):=P_{\Spin}(M)\times_\rho\mbs_m$ be the associated bundle, which we call the spinor bundle over $M$. Then the Dirac operator $D_\ig$ acts on smooth sections of $\mbs(M)$, i.e. $D_\ig: C^\infty(M,\mbs(M))\to C^\infty(M,\mbs(M))$.

Let $L^2(M,\mbs(M))$ be the Hilbert space defined as the completion of the space $C^\infty(M,\mbs(M))$ with respect to $(\cdot,\cdot)_2:=\int_M(\cdot,\cdot)_\ig d\vol_{\ig}$, where $(\cdot,\cdot)_\ig$ is the hermitian product on $\mbs(M)$ induced from $\ig$. The classical spectral theory and the self-adjointness of the Dirac operator in $L^2(M,\mbs(M))$ imply that $D_\ig$ has discrete real spectrum with finite multiplicities. Moreover the eigenvalues tend to both $+\infty$ and $-\infty$.

For any metric $\tilde\ig=e^{2u}\ig\in[\ig]$, we obtain a Dirac operator $D_{\tilde\ig}$. We denote the smallest (i.e. first) positive eigenvalue of $D_{\tilde\ig}$ by $\lm_1^+(\tilde\ig)$. Then the {\it B\"ar-Hijazi-Lott invariant} of $(M,\ig,\sa)$ is a non-negative real number $\lm_{min}^+(M,\ig,\sa)$ defined by (see for instance \cite{Ammann})
\[
\lm_{min}^+(M,\ig,\sa):=\inf_{\tilde\ig\in[\ig]}\lm_1^+(\tilde\ig)\Vol(M,\tilde\ig)^{\frac1m}.
\]
It is clear that the expression on the right-hand-side is chosen so as to remain conformally invariant, in a similar way as the Yamabe invariant \eqref{Yamabe invariant}. 

Finding bounds for this conformal invariant has attracted much interest during the last decades. A non-exhaustive list is \cite{Ammann2003, AGHM, AHM, Bar92, Hij86, Lott}. It can be seen from B\"ar's inequality \cite{Bar92} and from Hijazi's inequality \cite{Hij86} that
\[
\lm_{min}^+(M,\ig,\sa)^2\geq \begin{cases}
	2\pi\chi(M) & \text{for } m=2, \\[0.5em]
	\frac{m}{4(m-1)}Y(M,\ig) & \text{for } m\geq3,
\end{cases}
\]
where $\chi(M)$ is the Euler characteristic for Riemann surfaces.  When $M=S^m$, $m\geq2$, this implies that the B\"ar-Hijazi-Lott invariant is strictly positive. For general positivity, Lott \cite{Lott} derived the existence of a positive lower bound for $\lm_{min}^+(M,\ig,\sa)$ as long as the Dirac operator is invertible for some metric in the conformal class. His estimate does not require that $Y(M,\ig)>0$, but needs the weaker assumption that $D_\ig$ is invertible. An important improvement of this estimate was obtained by Ammann \cite{Ammann2003}. In particular, he showed that Lott's result extends to the case that the Dirac operator has non-trivial kernel, and hence
\[
\lm_{min}^+(M,\ig,\sa)>0
\]
for any closed Riemannian spin manifold $(M,\ig,\sa)$.
	
From the variational point of view, for a section $\psi\in C^\infty(M,\mbs(M))$, we can introduce a  functional in the form of the Rayleigh quotient:
\begin{\equ}\label{Rayleigh quotient}
J(\psi)=\frac{\Big(\int_M|D_{\ig}\psi|_\ig^{\frac{2m}{m+1}}d\vol_{\ig}\Big)^{\frac{m+1}{m}}}{\int_M(D_{\ig}\psi,\psi)_\ig d\vol_{\ig}}.
\end{\equ}
Based on some idea from \cite{Lott}, Ammann proved in \cite{Ammann} that 
\begin{\equ}\label{BHL-invariant}
\lm_{min}^+(M,\ig,\sa)=\inf_\psi J(\psi)
\end{\equ}
where the infimum is taken over the set of all smooth spinor fields for which
\[
\int_M(D_{\ig}\psi,\psi)_\ig d\vol_{\ig}> 0.
\]
Then upper bound estimates can be obtained by constructing suitable test spinors and evaluating the corresponding Rayleigh quotients in \eqref{Rayleigh quotient}. In a similar way as for the Yamabe invariant, the B\"ar-Hijazi-Lott invariant cannot be greater than that of the sphere: for $m\geq2$,
\begin{\equ}\label{spinorial Aubin inequ}
\lm_{min}^+(M,\ig,\sa)\leq\lm_{min}^+(S^m,\ig_{S^m},\sa_{S^m})=\frac m2 \om_m^{\frac1m}
\end{\equ}
where $\sa_{S^m}$ stands for the unique spin structure on $S^m$. The proof relies on a suitable cut-off argument performed on Killing spinors on the gluing of a sphere with large radius to the manifold, see \cite{Ammann2003, AGHM} for details. Then we are led to the following open problem

\begin{OProb*} 
Whether or not \eqref{spinorial Aubin inequ} is a strict inequality when $(M,\ig,\sa)$ is not conformally equivalent to the standard sphere $(S^m,\ig_{S^m},\sa_{S^m})$ of the same dimension?
\end{OProb*}

If the dimension of $M$ is $2$, there is a useful criterion for proving the strict inequality in \eqref{spinorial Aubin inequ}. The idea is based on the spinorial Weierstrass representation (see for instance \cite{KS96, Bar98, Fredrich98}). Examples of Riemann surfaces whose B\"ar-Hijazi-Lott invariant is strictly smaller than $2\sqrt{\pi}$ can be found in \cite{Ammann, Ammann2009}, see also \cite[Section 7]{SX2020} for a completely different approach on the two-dimensional torus. Apart from these, much less is known for higher dimensions. In their paper \cite{AHM},  Ammann, Humbert and Morel attacked this problem in the locally conformally flat setting. More precisely, if the spin manifold $(M,\ig,\sa)$ is  conformally flat around some point $p\in M$ and has an invertible Dirac operator $D_\ig$ and if, a further datum, the so-called {\it mass endomorphism} (a metric-induced self-adjoint endomorphism field of the spinor bundle, see \cite[Def. 2.10]{AHM}) has a non-zero eigenvalue at $p$, then \eqref{spinorial Aubin inequ} is a strict inequality. At this point one should be aware that the mass endomorphism of $(S^m,\ig_{S^m},\sa_{S^m})$ vanishes and that it does not characterize the round sphere since flat tori $\mathbb{T}^m$ also have vanishing mass endomorphism. We refer to \cite{AHM} for more details. In order to describe the dependence of the mass endomorphism on the Riemannian metrics, let $\cm_{U,\text{flat}}(M)$ be the set of all Riemannian metrics $\ig$ on $M$, which are flat on an open subset $U \subsetneq M$, and $\cm_{U,\text{flat}}^{\text{inv}}(M)\subset \cm_{U,\text{flat}}(M)$  the subset of metrics with invertible Dirac operators. Then it was proven in \cite{ADHH}, see also \cite{Hermann10}, that for dimension $m\geq3$, the subset $\cm_{p,U,\text{flat}}^{\neq0}(M)\subset \cm_{U,\text{flat}}^{\text{inv}}(M)$ of all Riemannian metrics with non-vanishing mass endomorphism at $p\in U$ is dense in $\cm_{U,\text{flat}}(M)$ with respect to the $C^\infty$-topology provided that $\cm_{U,\text{flat}}^{\text{inv}}(M)\neq\emptyset$ (it has been proved in \cite{Maier97} that $\cm_{U,\text{flat}}^{\text{inv}}(M)\neq\emptyset$ for any closed spin $3$-manifold, and for general cases, the $\al$-genus on the spin bordism class of $M$ describes whether the set $\cm_{U,\text{flat}}^{\text{inv}}(M)$ is admissible or not, see \cite{ADH}). However, for an arbitrarily given metric $\ig$ on $M$, examining the non-vanishing of its associated mass endomorphism (without any perturbation) is still a difficult issue.

We note that the proofs in \cite{AHM} and \cite{Hermann10} were given under the condition that $(M,\ig,\sa)$ is conformally flat in a neighborhood $U$ of a point $p\in M$, particularly, the behavior of $\ig$ outside~$U$ is rather irrelevant. The results obtained there describe that, somehow, the  non-vanishing of the mass endomorphism at $p\in U$ is induced from a perturbation of the metric $\ig$ outside $U$. In a very recent paper \cite{Is-Xu}, the authors constructed a family of non-locally conformally flat metrics $\ig_\vr$, $\vr>0$, on the $m$-sphere $S^m$ equipped with its unique spin structure $\sa_{S^m}$ such that the strict inequality
\begin{\equ}\label{inequality Is-Xu}
\lm_{min}^+(S^m,\ig_\vr,\sa_{S^m})<\lm_{min}^+(S^m,\ig_{S^m},\sa_{S^m})
\end{\equ}
holds for dimension $m\geq4$. The significance of this result is that the metric $\ig_\vr$ is a $C^\infty$-perturbation of the standard round metric $\ig_{S^m}$, i.e., $\ig_\vr\to\ig_{S^m}$ in $C^\infty$-topology on $S^m$ as $\vr\to0$, and $\ig_\vr|_U=\ig_{S^m}|_U$ for an open subset $U\subset S^m$. Comparing with the aforementioned results in \cite{AHM} and \cite{Hermann10}, it can be seen that even if the metric $\ig_\vr$ is conformally flat somewhere on $S^m$, the appearance of the non-locally conformally flat part is the key reason to obtain the strict inequality \eqref{inequality Is-Xu}. Moreover, the proof does not stand on the invertibility of the Dirac operators $D_{\ig_\vr}$, $\vr>0$.

It is well known that, for $m\geq4$, the conformal flatness of $(M,\ig)$ is characterized by the nullity of the Weyl tensor. However, what is the exact role of the Weyl tensor in the general inequality \eqref{spinorial Aubin inequ}  when $(M,\ig)$ is non-locally conformally flat remains unexplored.

By looking at the variational problem  \eqref{BHL-invariant}, we can see that any solution of the Euler-Lagrange equation can be rescaled to a solution of any of the following two equations
\[
D_\ig\psi=\lm_{min}^+(M,\ig,\sa)|\psi|_\ig^{\frac2{m-1}}\psi \quad \text{with } \int_M|\psi|_\ig^{\frac{2m}{m-1}}d\vol_\ig=1
\]
or (since $\lm_{min}^+(M,\ig,\sa)$ is always positive)
\[
 D_\ig\psi=|\psi|_\ig^{\frac2{m-1}}\psi \quad \text{with } \int_M|\psi|_\ig^{\frac{2m}{m-1}}d\vol_\ig=\lm_{min}^+(M,\ig,\sa)^m. 
\]
Hence, in order to resolve the aforementioned open problem, it is sufficient to find non-trivial solutions of
\begin{\equ}\label{Euler-Lagrange BHL}
	D_\ig\psi=|\psi|_\ig^{\frac2{m-1}}\psi \quad \text{with } \int_M|\psi|_\ig^{\frac{2m}{m-1}}d\vol_\ig<\big(\frac m2\big)^m\om_m.
\end{\equ}
In this paper we introduce a new variational approach into this problem and we give an affirmative answer for non-locally conformally flat Riemannian spin manifold with dimension at least~$6$. Hence our main result reads as
\begin{Thm}\label{main thm}
	Let $(M,\ig,\sa)$ be a closed spin manifold of dimension $m\geq6$. If $(M,\ig)$ is not locally conformally flat, then there exists a non-trivial solution to Eq. \eqref{Euler-Lagrange BHL}; moreover, there holds 
	\begin{\equ}\label{strict-inequality}
	\lm_{min}^+(M,\ig,\sa)<\lm_{min}^+(S^m,\ig_{S^m},\sa_{S^m})=\frac m2 \om_m^{\frac1m},
	\end{\equ}
where $\sa:P_{\Spin}(M)\to P_{\SO}(M)$ can be any choice of spin structure on $M$.
\end{Thm}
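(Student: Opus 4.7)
The plan is to adapt Aubin's strategy for the Yamabe problem to the Rayleigh-quotient characterization \eqref{BHL-invariant} of $\lm_{min}^+$. Since $m\geq 4$ and $(M,\ig)$ is not locally conformally flat, fix a point $p\in M$ with $W_\ig(p)\neq 0$ and work in conformal normal coordinates centered at $p$; in these coordinates $\sqrt{\det\ig}=1+O(|x|^N)$ for every $N$, the Ricci and scalar curvatures vanish to high order at $p$, and the leading non-trivial local invariant of the metric at $p$ is $|W_\ig(p)|^2$.

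I would then construct a test spinor $\widetilde\psi_\vr$ concentrating at $p$. On $\R^m$ the extremals of the Euclidean analogue of $J$ are obtained from Killing spinors on $S^m$ via inverse stereographic projection; they have the explicit pointwise norm $|\psi(y)|^2=|\psi_0|^2(1+|y|^2)^{-(m-1)}$ for a fixed spinor $\psi_0\in\mbs_m$, satisfy $D_\delta\psi=|\psi|^{2/(m-1)}\psi$, and realize $\frac{m}{2}\om_m^{1/m}$. Rescaling $\psi_\vr(y):=\vr^{-(m-1)/2}\psi(y/\vr)$, transporting to $U_p\subset M$ via the identification of $\mbs(M)|_{U_p}$ with $\mbs_m\times U_p$ induced by a $\ig$-synchronous radial frame, and multiplying by a radial cut-off yields $\widetilde\psi_\vr$.

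The heart of the proof is the asymptotic expansion
\[
J(\widetilde\psi_\vr)=\frac{m}{2}\om_m^{1/m}-c_m\,|W_\ig(p)|^2\,\vr^4+o(\vr^4)
\]
with $c_m>0$, valid for $m\geq 6$. To obtain it, I would expand both the volume form and the zeroth-order remainder $D_\ig-D_\delta$ (written in the chosen frame) in powers of $|x|$ using the CNC metric expansion, plug into \eqref{Rayleigh quotient}, and reduce numerator and denominator to weighted $L^{2m/(m-1)}$-integrals of $\psi_\vr$ via the identity $D_\delta\psi_\vr=|\psi_\vr|^{2/(m-1)}\psi_\vr$. After collecting the order-$\vr^4$ terms, the coefficient factorizes as $|W_\ig(p)|^2$ times a universal integral in $\psi$. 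The restriction $m\geq 6$ arises because the weighted integrals of $\psi$ that carry the Weyl coefficient are convergent precisely in this range; in dimensions $4$ and $5$ they diverge and boundary contributions from the cut-off of comparable size obstruct a definite sign.

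Granted this expansion, \eqref{BHL-invariant} yields $\lm_{min}^+(M,\ig,\sa)\leq J(\widetilde\psi_\vr)<\frac{m}{2}\om_m^{1/m}$ for small $\vr$, which is \eqref{strict-inequality}; the existence of a non-trivial solution to \eqref{Euler-Lagrange BHL} then follows from a concentration-compactness alternative for minimizing sequences of $J$, since loss of compactness can occur only through bubbling of a sphere-profile whose Rayleigh quotient equals $\frac{m}{2}\om_m^{1/m}$, a scenario just ruled out. The main obstacle is the asymptotic expansion itself: the non-quadratic structure of $J$ together with the non-parallelism of the Killing-spinor bubble means that the Weyl contribution does not fall out of a single curvature identity, in the way that $\int S_\ig u^2\,dV$ delivers it for the Yamabe functional. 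Instead it must be extracted by simultaneously tracking the spin-connection deviation of $D_\ig-D_\delta$, the deformation of the volume form, and the chosen radial frame trivialization, and then verifying that after all the cancellations the coefficient of $|W_\ig(p)|^2\vr^4$ is strictly negative.
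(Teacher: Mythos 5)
Your proposal founders at exactly the point the authors flag in their introduction: the Rayleigh quotient $J$ from \eqref{Rayleigh quotient} does \emph{not} drop below $\frac m2\om_m^{1/m}$ when evaluated on the natural concentrating test spinor. The paper records the actual computation: with the Killing-spinor bubble $\bar\va_\vr$ transplanted via the Bourguignon--Gauduchon trivialization, one finds
\[
\Big(\int_M|D_\ig\bar\va_\vr|_\ig^{\frac{2m}{m+1}}d\vol_\ig\Big)^{\frac{m+1}{m}}=\big(\tfrac m2\big)^{m+1}\om_m^{\frac{m+1}{m}}+o(\vr^4),
\qquad
\int_M(D_\ig\bar\va_\vr,\bar\va_\vr)_\ig\,d\vol_\ig=\big(\tfrac m2\big)^{m}\om_m-C\vr^4+o(\vr^4)
\]
for some $C>0$, so $J(\bar\va_\vr)>\frac m2\om_m^{1/m}$ for small $\vr$. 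The Weyl-tensor correction at order $\vr^4$ sits in the denominator with a \emph{negative} sign, and the fractional exponent $\tfrac{2m}{m+1}\in(1,2)$ in the numerator kills the compensating contribution after raising to the power $\tfrac{m+1}m$. Your claimed expansion $J(\widetilde\psi_\vr)=\frac m2\om_m^{1/m}-c_m|W_\ig(p)|^2\vr^4+o(\vr^4)$ with $c_m>0$ therefore has the wrong sign; the argument as written proves nothing. Relatedly, your appeal to a concentration-compactness alternative for minimizers of $J$ would need to come to grips with the fact that $J$ is not $C^2$ (the exponent $\tfrac{2m}{m+1}$ is not an integer), so second-order expansions around the bubble are not available for $J$ — this is the second reason the authors give for abandoning the Rayleigh-quotient route.

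The paper instead works with the strongly indefinite action functional $\Phi(\psi)=\frac12\int_M(D_\ig\psi,\psi)\,d\vol_\ig-\frac1{2^*}\int_M|\psi|^{2^*}d\vol_\ig$ on $H^{1/2}(M,\mbs(M))$, where the Weyl correction enters $\frac12\int(D\bar\va_\vr,\bar\va_\vr)$ with the favorable sign and the term $-\frac1{2^*}\int|\bar\va_\vr|^{2^*}$ is essentially unchanged at order $\vr^4$. Because $\Phi$ has infinite Morse index and co-index and $\bar\va_\vr$ is not itself a critical point, a mere evaluation $\Phi(\bar\va_\vr)$ does not bound a critical level; the authors therefore develop an abstract reduction (Proposition \ref{reduction}, Theorem \ref{abstract thm}) producing a Nehari-type manifold $\msn$ and the estimate $\ga\leq\cl(z_n)+O(\|\nabla\cl(z_n)\|^2)$ for any almost-critical sequence, which converts the two Lemmas \ref{derivative-esti} and \ref{energy-esti} into the strict bound $\ga<\ga_{crit}=\frac1{2m}(\frac m2)^m\om_m$. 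Existence then follows from the local Palais--Smale property (Lemma \ref{PS Dirac}) below $\ga_{crit}$, and a separate argument (via the nonlinear projection $T$ onto $\ker D_\ig$ in $L^{2^*}$) handles the case of a non-trivial Dirac kernel, an issue your proposal does not touch. So while the ingredients you identify — conformal normal coordinates, the Killing-spinor bubble, the Bourguignon--Gauduchon frame, the dimension restriction $m\geq6$ from convergence of weighted integrals — all appear, the variational framework you propose to assemble them in cannot deliver the strict inequality.
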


Note that we can approximate in the $C^\infty$-topology any given locally conformally flat metric on $M$ by a sequence of metrics, which are not conformally flat in a sufficiently small neighborhood of a point $p\in M$. Thus, as an immediate consequence of Theorem \ref{main thm}, we obtain

\begin{Cor}\label{main Cor}
		Let $(M,\ig,\sa)$ be a closed spin manifold of dimension $m\geq6$. The set of all Riemannian metrics for which the strict inequality \eqref{strict-inequality} holds is dense in the set of all Riemannian metrics on $M$ with respect to the $C^\infty$-topology.
\end{Cor}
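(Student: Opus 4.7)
The plan is to deduce the corollary directly from Theorem \ref{main thm} via a density argument for non-locally conformally flat metrics. Let $\cm(M)$ denote the space of Riemannian metrics on $M$ equipped with the $C^\infty$-topology, and let $\mathcal{N}\subset\cm(M)$ be the subset of metrics that fail to be locally conformally flat. By Theorem \ref{main thm}, every metric in $\mathcal{N}$ satisfies the strict inequality \eqref{strict-inequality}, so it suffices to prove that $\mathcal{N}$ is $C^\infty$-dense in $\cm(M)$.

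First I would observe that $\mathcal{N}$ is $C^\infty$-open. Indeed, the Weyl tensor $W_\ig$ is a universal polynomial expression in $\ig$, $\ig^{-1}$ and derivatives of $\ig$ up to order two, and hence $\ig\mapsto W_\ig$ is continuous in the $C^\infty$-topology; the condition that $W_\ig$ does not vanish identically is therefore open. Consequently, only the approximation step remains: given any locally conformally flat metric $\ig_0$, I must produce a $C^\infty$-small perturbation of $\ig_0$ that lies in $\mathcal{N}$.

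To construct such a perturbation, fix a point $p\in M$ and a coordinate chart $U\ni p$, and choose a symmetric $(0,2)$-tensor field $h$ compactly supported in a small ball inside $U$ such that the linearization
\[
L_h(p):=\left.\frac{d}{dt}\right|_{t=0}W_{\ig_0+th}(p)
\]
is nonzero as an algebraic Weyl tensor at $p$. Such an $h$ exists whenever $m\geq 4$, because the linearized Weyl operator is a linear second-order differential operator from symmetric $2$-tensors to the bundle of algebraic Weyl tensors whose principal symbol is pointwise surjective; in particular its image contains elements produced by compactly supported $h$. Setting $\ig_\vr:=\ig_0+\vr h$ for small $\vr>0$ gives a Riemannian metric whose Weyl tensor at $p$ equals $\vr\,L_h(p)+O(\vr^2)$ and hence is nonzero for all sufficiently small $\vr$. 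Since $\ig_\vr\to\ig_0$ in the $C^\infty$-topology, this exhibits $\mathcal{N}$ as dense near $\ig_0$, and the corollary follows.

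The main obstacle is essentially cosmetic: the corollary is soft relative to Theorem \ref{main thm}, and the only nontrivial point is the existence of the Weyl-breaking perturbation $h$. If one prefers to avoid the explicit construction, the same conclusion can be reached by invoking the classical fact that, for $m\geq 4$, the subset of non-locally conformally flat metrics on $M$ is residual in $\cm(M)$ with respect to the $C^\infty$-topology.
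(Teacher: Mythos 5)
Your proof follows the same route the paper uses (the paper's ``proof'' is just the one-sentence remark preceding the corollary: any locally conformally flat metric can be $C^\infty$-approximated by metrics that are not conformally flat near some point $p$, and then Theorem \ref{main thm} applies); you simply spell out the perturbation argument. One technical claim along the way is wrong, though, and deserves a flag: the principal symbol of the linearized Weyl operator at a single nonzero covector $\xi$ is \emph{not} pointwise surjective onto the bundle of algebraic Weyl tensors. Already for $m=4$ the symbol of the linearized Riemann curvature $\sigma_\xi: S^2\R^m\to\mathscr{R}$ has kernel $\{\xi\odot v:\,v\in\R^m\}$ of dimension $m$, so its image has dimension at most $m(m+1)/2-m=6$, whereas the space of algebraic Weyl tensors has dimension $10$; the composition with the Weyl projection therefore cannot be onto. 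Fortunately this claim is not needed: what you actually use is only that the map from the $2$-jet of a compactly supported $h$ at $p$ to $L_h(p)$ is a nonzero linear map, and this is immediate from the explicit formula for the linearized Riemann tensor (e.g.\ take $h_{ij}=-\tfrac13 (W_0)_{ikjl}x^kx^l$ in flat coordinates for any nonzero algebraic Weyl tensor $W_0$, times a cutoff). With that small repair, or with your proposed fallback via residuality of non-locally conformally flat metrics, the argument is correct and matches the paper's.
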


\begin{Rem}
In this remark we will summarize the reasons, for which we are interested in the strict inequality \eqref{strict-inequality}. In fact, such a strict inequality has several applications:
	\begin{itemize}
		\item Inequality \eqref{strict-inequality} implies that the invariant $\lm_{min}^+(M,\ig,\sa)$ is attained by a generalized conformal metric $\tilde\ig=|f|^{\frac2{m-1}}\ig$, i.e., there exists a spinor $\tilde\va$ on $(M,\tilde\ig,\sa)$ such that
		\[
		D_{\tilde\ig}\tilde\va=\lm_{min}^+(M,\ig,\sa)\tilde\va, \quad |\tilde\va|_{\tilde\ig}\equiv1
		\] 
		on $M\setminus f^{-1}(\{0\})$. Here $f\in C^2(M)$ may have some zeros but  $\supp f=M$, that is, the zero set of $f$ does not contain any non-empty open set of $M$; see \cite[Chapter 4]{Ammann}.

		\item As was mentioned before, if $m\geq3$, the Hijazi inequality \cite{Hij86} relates $\lm_{min}^+(M,\ig,\sa)$ to the Yamabe invariant $Y(M,\ig)$ via
		\[
		\lm_{min}^+(M,\ig,\sa)^2\geq\frac{m}{4(m-1)}Y(M,\ig).
		\]
		Hence inequality \eqref{strict-inequality} implies 
		\[
		Y(M,\ig)<m(m-1)\om_m^{\frac2m},
		\]
		which played the crucial role in the solvability of the Yamabe problem. Therefore, our results Theorem \ref{main thm} and Corollary \ref{main Cor} provide an alternative proof on the existence of a metric with constant scalar curvature in the conformal class of $[\ig]$ in the spinorial setting, which is of particular interest in differential geometry.
		
		\item Last but not least, using inequality \eqref{strict-inequality} one improves the existence theory for the spinorial analogue of the Brezis-Nirenberg problem
		\begin{\equ}\label{spinorial B-N}
			D_\ig\psi=\mu\psi+|\psi|_\ig^{\frac2{m-1}}\psi, \quad \mu\in\R
		\end{\equ}
		on $(M,\ig,\sa)$. This problem has been studied in \cite{Isobe11}, and an improvement was made recently in  \cite{Ba-Xu-JFA21}. It follows from the existence results in \cite{Ba-Xu-JFA21} that, for every $\mu>0$, a ground-state solution $\psi_\mu$ (which has the smallest energy among all solutions) of Eq. \eqref{spinorial B-N} always exists, and particularly, different values of $\mu$ correspond to distinguished  solution branches bifurcating from positive eigenvalues $\lm_k^+(\ig)$'s of the Dirac operator $D_\ig$. By virtue of inequality \eqref{strict-inequality} and the variational approach developed in \cite{Ba-Xu-JFA21}, we can obtain the existence result for $\mu=0$ and we can also see a new phenomenon that the solution branch bifurcating from the first positive eigenvalue $\lm_1^+(\ig)$ can be continued to $\mu<0$ in the sense that the energy changes continuously. Setting $\cs_k$, $k=1,2,\dots$, the energy branches bifurcating from the $k$-th positive eigenvalue of $D_\ig$, we can visualize in Figure 1 the updated existence theory. Here we omit the specific proofs since it is of independent interest, and we refer the readers to \cite{Ba-Xu-JFA21} for more details about Eq. \eqref{spinorial B-N}.
		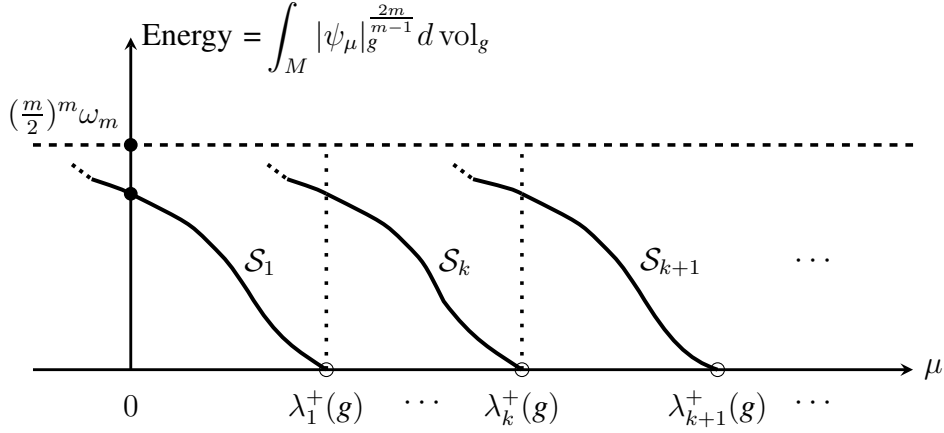
\begin{figure}[ht]	\begin{center}
			\begin{tikzpicture}[scale=1.3]
				\draw[thick,->,>=stealth,line width=0.2ex] (-1,0) -- (8,0);
				\draw[thick,->,>=stealth,line width=0.2ex] (0,0) -- (0,3.4);
				\draw[dashed,line width=0.25ex] (-1,2.3) -- (8,2.3);
				\draw[loosely dotted,line width=0.25ex] (2,0) -- (2,2.3);
				\draw[loosely dotted,line width=0.25ex] (4,0) -- (4,2.3);
				\coordinate [label=135: $(\frac m2)^m\om_m$] (cc) at (0,2.3);
				\draw (cc);
				\coordinate [label=-90:$0$] (o) at (0,-0.15);
				\draw (o);
				\coordinate [label=-90:$\lm_1^+(\ig)$] (a) at (2,-0.1);
				\draw (a);
				\coordinate [label=-90:$\cdots$] (z) at (3,-0.15);
				\draw (z);
				\coordinate [label=-90:$\lm_k^+(\ig)$] (b) at (4,-0.1);
				\draw (b);
				\coordinate [label=-90:$\lm_{k+1}^+(\ig)$] (c) at (6,-0.1);
				\draw (c);
				\coordinate [label=-90:$\cdots$] (zz) at (7,-0.15);
				\draw (zz);
				\coordinate [label=0:$\mu$] (e) at (8,0);
				\draw (e);
				\coordinate [label=0:$\displaystyle\text{Energy\,\,=}\int_M|\psi_\mu|_\ig^{\frac{2m}{m-1}}d\vol_\ig$] (f) at (0,3.4);
				\draw (f);
				\foreach \p in {a,b,c} \draw (\p)++(0,0.1) circle (2pt);
				\foreach \p in {cc} \fill (\p) circle (2pt);
				\draw[style=dotted,line width=1.5pt] (-0.6,2.1) -- (-0.4, 1.95);
				\draw[line width=1.5pt] (-0.4, 1.95) .. controls (-0.1, 1.85) .. (0,1.8);
				\draw[line width=1.5pt] (0,1.8) .. controls (0.6,1.5)
				.. (0.8,1.3) .. controls (1,1.1)
				.. (1.2,0.8) .. controls (1.5,0.3) and (1.7,0.2) .. (2, 0);
				\coordinate [label=-180: $\ $] (cc0) at (0,1.5);
				\draw (cc0);
				\foreach \p in {cc0} \fill (\p)++(0,0.3) circle (2pt);
				\coordinate [label=-180: $\cs_1$] (s1) at (1.6,1.1);
				\draw (s1);
				\draw[style=dotted,line width=1.5pt] (1.4,2.1) -- (1.6, 1.95);
				\draw[line width=1.5pt] (1.6, 1.95) .. controls (1.9, 1.85) .. (2,1.8);
				\draw[line width=1.5pt] (2,1.8) .. controls (2.6,1.5)
				.. (2.8,1.3) .. controls (3,1.1)
				.. (3.2,0.7) .. controls (3.5,0.3) and (3.7,0.2) .. (4, 0);
				\coordinate [label=-180: $\cs_k$] (sk) at (3.6,1.1);
				\draw (sk);
				\draw[style=dotted,line width=1.5pt] (3.3,2.1) -- (3.5, 1.95);
				\draw[line width=1.5pt] (3.5, 1.95) .. controls (3.9, 1.85) .. (4,1.8);
				\draw[line width=1.5pt] (4,1.8) .. controls (4.6,1.5)
				.. (4.8,1.3) .. controls (5,1.1)
				.. (5.2,0.8) .. controls (5.5,0.3) and (5.7,0.1) .. (6, 0);
				\coordinate [label=-180: $\cs_{k+1}$] (skk) at (5.95,1.1);
				\draw (skk);
				\coordinate [label=-90:$\cdots$] (zzz) at (7,1.3);
				\draw (zzz);
			\end{tikzpicture}
			\caption{\it Ground-state energy branch $\cs_1$ is continued to $\mu<0$}
	\end{center}\end{figure}
	\end{itemize}
\end{Rem}

\begin{Rem}
In case the dimension $m\not\equiv 3$ mod $4$, due to the symmetries of the Dirac spectrum the inequality \eqref{strict-inequality} is equivalent to 
\begin{\equ}\label{strict-inequality-negative}
	\lm_{min}^-(M,\ig,\sa)=\inf_{\tilde\ig\in[\ig]}|\lm_1^-(\tilde\ig)|\Vol(M,\tilde\ig)^{\frac1m}<\frac m2 \om_m^{\frac1m}
\end{\equ}
where $\lm_1^-(\tilde\ig)$ stands for the largest negative eigenvalue of $D_{\tilde\ig}$ with $\tilde\ig\in[\ig]$. At this moment, we actually have
\[
\lm_{min}^-(M,\ig,\sa)=\lm_{min}^+(M,\ig,\sa).
\]
If $m\equiv3$ mod $4$, the spectrum of the Dirac operator is no longer symmetric  (see for instance \cite[Example 2.15]{AHM} where the real projective space $\R P^{4k+3}$ was studied). The validity of \eqref{strict-inequality-negative} can be then turned into the study of the ``negatively signed" Dirac equation
\[
D_\ig\psi=-|\psi|_\ig^{\frac2{m-1}}\psi \quad \text{with} \quad 
\int_M|\psi|_\ig^{\frac{2m}{m-1}}d\vol_\ig<\big(\frac m2\big)^m\om_m
\]
and vice versa.
\end{Rem}

From the analytical view point,  $\psi\in C^1(M,\mbs(M))$ is a solution to Eq. \eqref{Euler-Lagrange BHL} if and only if $\psi$ is a critical point of the functional 
\begin{\equ}\label{the functional}
\Phi(\psi)=\frac12\int_M(D_\ig\psi,\psi)_\ig d\vol_{\ig}-\frac{m-1}{2m}\int_M|\psi|_\ig^{\frac{2m}{m-1}}d\vol_{\ig}.
\end{\equ}
Note that the spectrum of the Dirac operator $D_\ig$ consists of an infinite sequence of eigenvalues tending to both $+\infty$ and $-\infty$, the functional $\Phi$ is of strongly indefinite type (its leading part is respectively coercive and anti-coercive on infinite-dimensional subspaces of the energy space). And hence, a critical point of $\Phi$ has infinite Morse index and infinite co-index. Intuitively, this variational structure is much more complicated than the Rayleigh quotient given in \eqref{Rayleigh quotient} and the associated minimizing characterization \eqref{BHL-invariant}. Here, our choice to study \eqref{the functional} instead of \eqref{Rayleigh quotient}-\eqref{BHL-invariant} is based on two major reasons:
\begin{itemize}
	\item[1.]  Looking at the functional $J$ in \eqref{Rayleigh quotient}, one sees that the exponent $\frac{2m}{m+1}\in(1,2)$ in the numerator is not an integer for any $m\geq2$. Therefore, the second Fr\'echet derivative of $J$ does not exist in general, particularly, it can not exist if the variable $\psi$ vanishes on an open subset of $M$. In other words, the functional $J$ is not smooth enough to be approximated by the Taylor expansion with higher order terms.

	\item[2.] In a recent work of the second author with T. Isobe \cite{Is-Xu}, a family of non-locally conformally flat metrics has been constructed on the $S^m$, $m\geq4$, so that the strict inequality \eqref{strict-inequality} holds true. This result provides the introductory examples of spin manifolds that justify the validity of  \eqref{strict-inequality}, and the approach in \cite{Is-Xu} is based on bifurcation techniques and the explicit expression of the functional $\Phi$ in \eqref{the functional}. Motivated by this work, we tend to use the functional $\Phi$ as it may better reflect the influence of non-locally conformally flatness in the energy estimates.
%
%
\end{itemize} 

In order to prove Theorem \ref{main thm}, our strategy is to make a careful analysis of the functional $\Phi$ in \eqref{the functional} which is twice differentiable on $H^{1/2}(M,\mbs(M))$, the Sobolev space of $1/2$-order. As was mentioned before,  the proof relies on constructing a suitable test spinor $\bar\va_\vr\in C^{\infty}(M,\mbs(M))$. However, since the functional $\Phi$ is strongly indefinite, critical points of $\Phi$ are generally saddle points. In this way, it is not enough to simply evaluate $\Phi(\bar\va_\vr)$. One needs more structural information of $\Phi$ in a small neighborhood of $\bar\va_\vr$. To do this, we construct a necessary constraint for $\Phi$, the Nehari-type manifold $\msn$. Every critical point of $\Phi$ has to lie on $\msn$ and $\Phi|_\msn$ is bounded from below. Hence minimizers of $\Phi$ on $\msn$ may be called least energy solutions, or ground states, of Eq. \eqref{Euler-Lagrange BHL}. In order to prove $\inf_\msn\Phi<\frac m2\om_m^{1/m}$, we construct our test spinor $\bar\va_\vr$  via spinorial techniques provided by Bourguignon and Gauduchon \cite{BG}. Since these spinors $\bar\va_\vr$, $\vr>0$, do not lie on $\msn$, we have to find modifications of them that lie on the Nehari-type manifold, and we have to control the energy of these modifications. This is the main technical difficulty that we have to overcome. Another difficulty comes from the appearance of the Dirac-harmonic spinors, that is, the non-trivial kernel of the Dirac operator $D_\ig$. It can be seen that the elements in $\ker(D_\ig)$ have an influence on the second integration in \eqref{the functional}. In this case, we use a new idea, replacing the test spinor $\bar\va_\vr$ by $\bar\va_\vr-T(\bar\va_\vr)$ where $T(\psi)$ stands for the best approximation of an element $\psi\in L^{\frac{2m}{m-1}}(M,\mbs(M))$ in $\ker(D_\ig)$ with respect to the $L^{\frac{2m}{m-1}}$-norm.

\medskip

The article is organized as follows. In Sect. \ref{sec: functional settings}, we will give some preliminary materials on the functional $\Phi$. This section contains two parts. Firstly, we recall the $H^{1/2}$-spinors and the variational settings that are needed for $\Phi$ to be defined. Secondly, in an abstract setting, we prove an energy estimate around the ground state critical level of a specific class of strongly indefinite functionals, which includes $\Phi$ as a special case. This kind of energy estimate is a continuation of previous work \cite{Xu-CAG} by the second author, and we present here an abstract version so that it can apply to a wider range of situations. In Sect. \ref{sec: geometric preliminaries}, we recapture a specific local trivialization of the spinor bundle, called the Bourguignon-Gauduchon-trivialization. And for latter use, we adapt the detailed expansion of the Dirac operator in this trivialization. In the following, i.e. Sect. \ref{sec: proof the main results}, we give the complete proof of our Theorem \ref{main thm}. The detailed proof is divided into three major steps: 
\begin{itemize}
	\item[1.] The construction of a ``good" test spinor. In fact, under stereographic projection, a Killing spinor on the $m$-sphere yields a solution to $D_{\ig_{\R^m}}\psi=|\psi|_{\ig_{\R^m}}^{{2}/{m-1}}\psi$ on the Euclidean space $\R^m$. This solution will be rescaled, cut-off and later transplanted to a neighborhood of a given point $p_0\in M$. Some fundamental estimates are derived from elementary computations using this test spinor.
	
	\item[2.] The proof for the case there is no Dirac-harmonic spinors, that is $\ker(D_\ig)=\{0\}$.
	
	\item[3.] The proof the case $\ker(D_\ig)\neq\{0\}$. 
\end{itemize}
Both of the last two steps are the consequences of the abstract result developed in Sect. \ref{sec: functional settings}.

\section{Analytical preliminaries}\label{sec: functional settings}

In this section, we assume $(M,\ig,\sa)$ is a general closed Riemannian spin manifold, we fix the Riemannian metric $\ig$ and the spin structure $\sa$.

\subsection{Functional settings}

To treat the Eq. \eqref{Euler-Lagrange BHL} from a variational point of view, it is necessary to set up a functional framework. A suitable function space  $H^{1/2}(M,\mbs(M))$ of spinor fields is introduced, for example, in \cite{Isobe11, Isobe13}. For being self-contained, we give the definitions as follows.

Recall that the Dirac operator $D_\ig$ is self-adjoint on $L^2(M, \mbs(M))$ and has compact resolvents (see \cite{Friedrich00, LM}). We obtain a complete orthonormal basis $\psi_1,\psi_2, . . .$ of the Hilbert space $L^2(M, \mbs(M))$ consisting of the eigenspinors of $D_\ig$: $D_\ig\psi_k = \lm_k\psi_k$. Moreover, $|\lm_k| \to\infty$ as $k \to\infty$.

For $s\geq0$, we define the (unbounded) operator $|D_\ig|^s:L^2(M, \mbs(M))  \to L^2(M, \mbs(M))$ by
\[
\psi=\sum_{k=1}^\infty\al_k\psi_k \quad \mapsto \quad |D_\ig|^s\psi=\sum_{k=1}^\infty |\lm_k|^s\al_k\psi_k,
\]
where the domain of $|D_\ig|^s$ is given by
\[
H^s(M,\mbs(M)):=\Big\{\psi=\sum_{k=1}^\infty\al_k\psi_k\in L^2(M,\mbs(M)):\,  \sum_{k=1}^\infty|\lm_k|^{2s}|\al_k|^2<\infty\Big\}.
\] 
We remark here that $H^s(M,\mbs(M))$ coincides with the usual Sobolev space of order $s$, that is $W^{s,2}(M,\mbs(M))$ (cf. \cite{Adams, Ammann}). Furthermore, let $P^0:L^2(M,\mbs(M))\to\ker(D_\ig)$ denote the $L^2$-orthogonal projector, we can equip $H^s(M,\mbs(M))$ with the inner product
\[
\inp{\psi}{\va}_{s,2}:=\real(|D_\ig|^s\psi,|D_\ig|^s\va)_2+\real(P^0\psi,P^0\va)_2
\]
and the induced norm $\|\cdot\|_{s,2}$. This inner product turns $H^s(M,\mbs(M))$ into a Hilbert space.  In this paper, we are mainly concerned with $s=\frac12$ and the space $H^\frac12(M,\mbs(M))$.

Clearly $E := H^{1/2}(M,\mbs(M))$ with the inner product $\inp{\cdot}{\cdot}=\inp{\cdot}{\cdot}_{1/2,2}$ has the orthogonal decomposition into three subspaces
\begin{\equ}\label{decomposition}
	E = E^+\op E^0\op E^-
\end{\equ}
where $E^\pm$ is the positive (resp.\ negative) eigenspace of $D_\ig$, and $E^0=\ker(D_\ig)$ is its kernel (which may be trivial). By $P^\pm:E\to E^\pm$ and $P^0:E\to E^0$ we denote the orthogonal projections. If it is clear from the context, we simply write $\psi^\pm=P^\pm(\psi)$ and $\psi^0=P^0(\psi)$ for $\psi\in E$. And throughout this paper, we shall denote $L^q:=L^q(M,\mbs(M))$ with the norm $\|\psi\|_{L^q}^q= \int_M |\psi|_\ig^qd\vol_{\ig}$ for $q\geq1$ and denote $2^*=\frac{2m}{m-1}$  the critical Sobolev exponent of the embedding $H^{1/2}(M,\mbs(M))\hookrightarrow L^q(M,\mbs(M))$ for $1\leq q\leq 2^*$. From now on, when no confusion can arise, we will write $\inp{\cdot}{\cdot}$ instead of $\inp{\cdot}{\cdot}_{1/2,2}$  for the inner product on $H^{1/2}(M,\mbs(M))$, and $\|\cdot\|$ will stand for $\|\cdot\|_{1/2,2}$.

In this setting, the functional $\Phi$ in \eqref{the functional} can be reformulated as
\begin{\equ}\label{the functional in E}
\Phi(\psi)=\frac12\big( \|\psi^+\|^2-\|\psi^-\|^2 \big)-\frac1{2^*}\|\psi\|_{L^{2^*}}^{2^*} \quad \text{for } \psi\in E.
\end{\equ}
And, by elementary calculations, we have $\Phi$ is of $C^2$ (that is the second Fr\'echet derivative exists) and
\[
\inp{\nabla\Phi(\psi)}{\va}=\inp{\psi^+}{\va^+}-\inp{\psi^-}{\va^-}-\real\int_M|\psi|_\ig^{2^*-2}(\psi,\va)_\ig d\vol_{\ig}
\]
and
\[
\aligned
\inp{\nabla^2\Phi(\psi)[\phi]}{\va}&=\inp{\phi^+}{\va^+}-\inp{\phi^-}{\va^-}-\real\int_M|\psi|_\ig^{2^*-2}(\phi,\va)_\ig d\vol_{\ig}  \\
&\quad -(2^*-2)\int_M|\psi|_{\ig}^{2^*-4}\real(\psi,\phi)_\ig\real(\psi,\va)_\ig d\vol_{\ig}
\endaligned
\]
for $\psi,\phi,\va\in E$.
The quadratic part of $\Phi$ is of strongly indefinite type and, particularly, a critical point $\Phi$ has infinite Morse index and infinite co-index since both $E^\pm$ are infinite dimensional.  There is, in addition, one further point that is the lack of compactness for the functional $\Phi$ due to the non-compact embedding $H^{1/2}(M,\mbs(M))\hookrightarrow L^{2^*}(M,\mbs(M))$. In other words, this means that the functional $\Phi$ does not satisfy the Palais-Smale condition in the total energy range $(-\infty,+\infty)$. Such a compactness condition was originally introduced by Palais and Smale in \cite{PS} and has proved to be extremely useful in variational methods. To describe a compactness condition of this type, it is usually convenient to recall the following concept
\[
\aligned
&\text{A sequence $\{\psi_n\}\subset E$ is a $(PS)$-sequence for $\Phi$ if $\Phi(\psi_n)$ is bounded,}\\
&\text{uniformly in $n$, while $\nabla\Phi(\psi_n)\to0$ as $n\to\infty$.}
\endaligned
\]
And if $\Phi(\psi_n)\to c\in\R$ and $\nabla\Phi(\psi_n)\to0$, then $\{\psi_n\}$ is called a $(PS)_c$-sequence. In terms of this definition, the compactness condition may be then phrased as: any $(PS)$-sequence (or $(PS)_c$-sequence) has a (strongly) convergent subsequence. It is clear that if a $(PS)$-sequence of $\Phi$ converges to $\psi$, then $\psi$ is a critical point of $\Phi$. (Let us remind the readers here that the above definitions can be easily carried over to any $C^1$ functional on a Banach space).

The following lemma shows that $\Phi$ satisfies a local version of Palais-Smale condition (its proof can be found in \cite[Section 5]{Isobe11}). 

\begin{Lem}\label{PS Dirac}
	Let $\ga_{crit}=\frac1{2m}\big(\frac m2\big)^m \om_m$, where $\om_m$ stands for the volume of the standard sphere $S^m$. Then the functional $\Phi$ satisfies the Palais-Smale condition in the energy range $(-\infty, \ga_{crit})$ in the sense that any $(PS)_c$-sequence for $\Phi$ with $c<\ga_{crit}$ is compact in $E$ and $c$ is a critical value.
\end{Lem} 

\medskip

\begin{Rem}\label{remark 1}
Thanks to the explicit formulation of $\Phi$ in \eqref{the functional in E}, if $\psi\in E$ is a critical point of $\Phi$, then we have
\[
\Phi(\psi)=\Phi(\psi)-\frac12\inp{\nabla\Phi(\psi)}{\psi}=\Big(\frac12-\frac1{2^*}\Big)\int_M|\psi|_\ig^{2^*}d\vol_{\ig}=\frac1{2m}\int_M|\psi|_\ig^{2^*}d\vol_{\ig}.
\]
Hence all possible critical values of $\Phi$ are non-negative and, in particular, if $0<c<\ga_{crit}$ is a critical value of $\Phi$ and $\psi$ be the corresponding critical point then 
\[
0<\int_M|\psi|_\ig^{2^*}d\vol_{\ig}=2mc<\big(\frac m2\big)^m\om_m.
\]
Therefore, to prove Theorem \ref{main thm}, it is sufficient to find a specific critical value of $\Phi$ which stays below the threshold $\ga_{crit}$.
\end{Rem}

\subsection{A local energy estimate via reduction techniques}

In this subsection, we intend to establish an approximate calculation for the lowest/smallest minimax level of $\Phi$ so that one can insert test elements to obtain an accurate upper bound estimate. In order to do this, we are going to prove an abstract result for a strongly indefinite functional $\cl$ on a general Hilbert space $\ch$, the underlying idea consisting of doing separate studies of the ``geometry" of $\cl$ and its approximate critical sequences. Let us mention here that our abstract result here is a continuation of the work \cite{Xu-CAG}, and can be applied to more general situations.

In what follows, let us assume that $\ch$ is equipped with the scalar product $\inp{\cdot}{\cdot}$ and the induced norm $\|\cdot\|$. Let $\ch$ be decomposed into $\ch=X\op Y$ with $X$ and $Y$ being infinite dimensional orthogonal subspaces of $\ch$. Then each element $z\in\ch$ possesses the representation $z=z^X+z^Y$, where $z^X\in X$ and $z^Y\in Y$ are projections of $z$ in $X$ and $Y$ respectively. In this setting, let us consider the functional
\[
\cl(z)=\frac12\big( \|z^X\|^2-\|z^Y\|^2 \big)-\Psi(z)
\]
with $\Psi\in C^2(\ch,\R)$, a twice continuously Fr\'echet differentiable term, satisfying
\begin{itemize}
	\item[(H1)] There is a continuous function $\varrho:[0,\infty)\to[0,\infty)$ such that 
	$\normm{\nabla^2 \Psi(z)}\leq \varrho(\|z\|)$,
	where $\normm{\cdot}$ denotes the operator norm for continuous linear operators on $(\ch,\inp{\cdot}{\cdot})$.
	
	\item[(H2)] $\Psi(0)=0$ and there exists $p>2$ such that $\inp{\nabla\Psi(z)}{z}\geq p\Psi(z)$ for all $z\in\ch$.
	
	\item[(H3)] $\Psi\not\equiv0$ on $\ch$ and is convex.
	
	\item[(H4)] There exist $K>0$ and $\mu\in(\frac12,1)$ such that $\|\nabla\Psi(z)\|\leq K\inp{\nabla\Psi(z)}{z}^\mu+\mu\|z\|$ for all $z\in\ch$.
	
	\item[(H5)] There exists $\ka>1$ such that, for arbitrary $z,w\in\ch$, 
	\[
	\inp{\nabla^2 \Psi(z)[z+w]}{z+w}-2\inp{ \nabla\Psi(z)}{w}   
	\geq \ka\inp{ \nabla\Psi(z)}{z}.
	\]
\end{itemize}

Clearly, the functional $\cl$ has a trivial critical point at the origin. It can be seen from (H2) that the function $\Psi(tz)/t^p$, $t>0$, is nondecreasing in $t$ for all $z\in\ch$. This, combined with the convexity in (H3) and the requirement $\nabla\Psi(0)=0$ in (H4), it follows that $\Psi\geq0$ (i.e. $0$ is the global minimum of $\Psi$) and 
\[
\Psi(z)=0  \Longrightarrow \Psi(tz)=0 \text{ for all } t\in[0,1].
\] 
Furthermore, from (H4) and the obvious inequality
\[
ab\leq \mu a^{\frac1\mu}+(1-\mu)b^{\frac1{1-\mu}}, \quad \text{for } a,b\geq0
\]
we deduce that
\[
\|\nabla\Psi(z)\|\leq \epsilon\mu\|\nabla\Psi(z)\|+ \epsilon^{-\frac{\mu}{1-\mu}} (1-\mu)K^{\frac1{1-\mu}} \|z\|^{\frac\mu{1-\mu}} + \mu\|z\|
\]
where $\epsilon>0$ is arbitrary. If $\epsilon$ is fixed small, then we have the following boundedness of $\nabla\Psi$:
\begin{\equ}\label{N bdd}
	\|\nabla\Psi(z)\|\leq\frac{\epsilon^{-\frac{\mu}{1-\mu}} (1-\mu)K^{\frac1{1-\mu}}}{1-\epsilon\mu}\|z\|^{\frac\mu{1-\mu}} + \frac{\mu}{1-\epsilon\mu}\|z\| \quad \text{for all }z\in\ch.
\end{\equ}
Beyond the positivity and the convexity of $\Psi$,  (H5) implies that $\nabla^2 \Psi(z)\neq 0$ when $\Psi(z)>0$.

In what follows, we begin with the study of the ``geometry" of $\cl$ by performing a generalized version of the Lyapunov-Schmidt reduction procedure, developed for strongly indefinite functionals. This enable us to study an equivalent reduced functional whose critical points are in one-to-one correspondence with those of $\cl$. This kind of idea can be traced back to \cite{Amann, CL} in the study of periodic solutions of Hamiltonian systems and nonlinear wave equations in one space dimension, and some later developments can be found in \cite{Ackermann, BJS, SW2010} for  semilinear elliptic equations. Here, our purpose is to point out that the variational structure of $\cl$ may be set up with much weaker conditions on $\Psi$ to ensure an energy estimate for a minimax value of $\cl$.  
By taking the advantage of the positiveness and convexity of $\Psi$, we have a very good geometric behavior of the functional $\cl$ in the following sense:
\begin{itemize}
	\item[(i)] Since $\Psi$ is non-negative, for a given $\va\in X$, the functional $\cl_\va$ defined on $Y$ by
	\[
	\cl_\va(w)=\frac12\|\va\|^2-\frac12\|w\|^2-\Psi(\va+w), \quad w\in Y
	\]
	is anti-coercive, i.e., $\cl_\va(w)\to-\infty$ as $\|w\|\to\infty$.
	
	\item[(ii)] Since $\Psi$ is convex, $\cl_\va$ is strictly concave, in other words, the quadratic form 
	\[
	\inp{\nabla^2\cl_\va(w)[\,\cdot\,]}{\,\cdot\,}: Y\times Y\to\R
	\]
	is strictly negative definite.
\end{itemize}
Having the above two properties available, for a fixed $\va\in X\setminus\{0\}$, we almost arrive at the point where we can try to maximize the functional $\cl$ on $\R^+\va\op Y$, $\R^+=(0,\infty)$. However, just assuming the hypothesis (H1)-(H5), the supremum of $\cl|_{\R^+\va\op Y}$ is not necessarily finite (for instance, if we take $\Psi(z)=\frac1p\|z^Y\|^p$, $p>2$, we shall have $\cl(t\va)\to+\infty$ as $t\to\infty$ for every $\va\in X\setminus\{0\}$). The situation will be more interesting and important if there exists some $\va\in X\setminus\{0\}$ such that $\sup\cl|_{\R^+\va\op Y}<+\infty$ and the supremum is attained. Remark that $\va$ is varying in the space $X\setminus\{0\}$, we can then restrict ourselves to the choice $\va\in S^X:=\{\va\in X:\, \|\va\|=1\}$ without changing the supremum of $\cl$ on $\R^+\va\op Y$. By virtue of \eqref{N bdd} in which we can fix $\epsilon\leq 1-\mu$, we deduce from the second part of (H2) that 
\begin{\equ}\label{Psi-al upper}
	\Psi(z)\leq\frac1p\inp{\nabla\Psi(z)}{z}\leq\frac1p\|z\|^2+C\|z\|^{\frac1{1-\mu}}
\end{\equ}
for all $z\in\ch$, where $C>0$ is a constant. Indeed, this is a direct consequence of $\frac{\mu}{1-\epsilon\mu}<\frac\mu{1-\epsilon}\leq 1$.
And so, by the fact $\frac1{1-\mu}>2$, we have the existence of $\tau>0$ such that
\begin{\equ}\label{linking1}
	\max_{z\in\R^+\va\op Y}\cl(z)\geq\max_{t>0}\cl(t\va)\geq \max_{t>0} \Big( \frac{p-2}{2p} t^2- C t^{\frac1{1-\mu}} \Big) \geq \tau
	\quad \text{for all } \va\in S^+.
\end{\equ}
Therefore, we have found a candidate min-max scheme for $\cl$ which is strictly positive:  first maximize the functional $\cl$ on $\R^+\va\op Y$ and then minimize with respect to $\va\in S^X$ (we will only be interested in the case where such a min-max value of $\cl$ is finite). 

As was mentioned before, for strongly indefinite problems, critical points are of saddle type in general. Among all these critical levels, to have a better understanding of the structural information about the ground state energy, an efficient way is to reduce the functional $\cl$ to a subspace which contributes mostly to its critical points. Specifically, let $P:\ch\to X$ denote the orthogonal projection onto $X$, that is $Pz=z^X$ for $z\in\ch$, we have the following reduction principle.

\begin{Prop}\label{reduction}
	\begin{itemize}
		\item[$(1)$] There exists $\bt\in C^1(X,Y)$ such that for $\va\in X$
		\[
		w\in Y,\ w\neq\bt(\va) \Longrightarrow
		\cl(\va+w)<\cl(\va+\bt(\va)),
		\]
		that is $\bt(\va)$ is the unique maximizer of $\cl_\va$ on $Y$ and hence
		\begin{\equ}\label{reduction equ}
			\bt(\va)=-(I-P)\nabla\Psi(\va+\bt(\va));
		\end{\equ}
		furthermore, for $\va\in\ch^+$,
		\begin{\equ}\label{reduction bdd}
			\|\bt(\va)\|^2\leq 2\Psi(\va)
		\end{\equ}
		and
		\begin{\equ}\label{reduction derivative bdd}
			\normm{\nabla\bt(\va)}\leq \normm{\nabla^2 \Psi(\va+\bt(\va))}.
		\end{\equ}
		
		\item[$(2)$]  Let $\cj: X\to\R$ be given by $\cj(\va)=\cl(\va+\bt(\va))$, then $\|\nabla \cj(\va)\|=\|\nabla\cl(\va+\bt(\va))\|$ for all $\va\in X$.
	\end{itemize}
\end{Prop}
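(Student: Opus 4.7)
The plan is to first handle the existence, uniqueness and first-order characterization of $\bt(\va)$ using the two geometric features of $\cl_\va$ already noted before the statement: anti-coercivity on $Y$ (from $\Psi\ge 0$) and \emph{strict} concavity (convexity of $\Psi$ together with the strictly negative $-\tfrac12\|w\|^2$ makes $\inp{\nabla^2\cl_\va(w)[\,\cdot\,]}{\,\cdot\,}$ strictly negative definite on $Y$). These two features force a unique maximizer $\bt(\va)\in Y$ of $\cl_\va$ together with the strict inequality $\cl_\va(w)<\cl_\va(\bt(\va))$ for $w\ne\bt(\va)$; the critical-point condition for $\cl_\va$ on $Y$, namely $(I-P)\nabla\cl(\va+\bt(\va))=0$, rewrites exactly as \eqref{reduction equ}. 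For the $C^1$ regularity of $\va\mapsto\bt(\va)$ I would invoke the implicit function theorem for
\[
F:X\times Y\to Y, \qquad F(\va,w)=w+(I-P)\nabla\Psi(\va+w),
\]
whose partial derivative $D_w F(\va,w)=\mathrm{Id}_Y+A(\va,w)$, with $A(\va,w):=(I-P)\nabla^2\Psi(\va+w)\big|_Y$, is uniformly invertible: (H3) gives $\nabla^2\Psi\ge 0$, so $\inp{A(\va,w)\,v}{v}=\inp{\nabla^2\Psi(\va+w)v}{v}\ge 0$ on $Y$, and $(\mathrm{Id}_Y+A)^{-1}$ has operator norm at most $1$. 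Since $\Psi\in C^2$, $F\in C^1$ and the theorem yields $\bt\in C^1(X,Y)$.

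Next I would establish the two quantitative bounds. The estimate \eqref{reduction bdd} is immediate from the maximizer inequality $\cl_\va(\bt(\va))\ge\cl_\va(0)$: cancelling the common $\tfrac12\|\va\|^2$ and using $\Psi(\va+\bt(\va))\ge 0$ leaves $\tfrac12\|\bt(\va)\|^2\le\Psi(\va)-\Psi(\va+\bt(\va))\le\Psi(\va)$. For \eqref{reduction derivative bdd} I would differentiate \eqref{reduction equ} in a direction $\phi\in X$; writing $v:=\nabla\bt(\va)[\phi]\in Y$ the chain rule gives
\[
(\mathrm{Id}_Y+A)[v]=-(I-P)\nabla^2\Psi(\va+\bt(\va))[\phi],
\]
and since $\|(\mathrm{Id}_Y+A)^{-1}\|\le 1$, one concludes $\|v\|\le\|(I-P)\nabla^2\Psi(\va+\bt(\va))[\phi]\|\le\normm{\nabla^2\Psi(\va+\bt(\va))}\|\phi\|$, which is exactly \eqref{reduction derivative bdd}.

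Finally, for part $(2)$ I would differentiate $\cj(\va)=\cl(\va+\bt(\va))$ to get
\[
\inp{\nabla\cj(\va)}{\phi}=\inp{\nabla\cl(\va+\bt(\va))}{\phi+\nabla\bt(\va)[\phi]}.
\]
By the very definition of $\bt$ the $Y$-component $(I-P)\nabla\cl(\va+\bt(\va))$ vanishes, and $\nabla\bt(\va)[\phi]\in Y$, so the cross term drops out and $\nabla\cj(\va)=P\nabla\cl(\va+\bt(\va))$. Combined with the vanishing $Y$-part of $\nabla\cl(\va+\bt(\va))$, the decomposition $\ch=X\op Y$ then gives $\|\nabla\cj(\va)\|=\|\nabla\cl(\va+\bt(\va))\|$ and trivially $\cl(\va+\bt(\va))=\cj(\va)$, so any $(PS)$-sequence for $\cj$ lifts to a $(PS)$-sequence for $\cl$ at the same level.

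The subtle point I anticipate is making the implicit function step truly global in $\va\in X$ rather than only local. For strongly indefinite problems this is generally delicate, but it works cleanly here because convexity of $\Psi$ makes the linearization $\mathrm{Id}_Y+A(\va,w)$ uniformly invertible at every base point, so the local $C^1$ solution supplied by the theorem at each $\va\in X$ can be unambiguously glued into a single global $C^1$ map thanks to the uniqueness statement coming from strict concavity.
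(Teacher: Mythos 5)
Your proposal is correct and follows essentially the same route as the paper: existence and uniqueness of $\bt(\va)$ from anti-coercivity and strict concavity of $\cl_\va$, the implicit function theorem applied to $F(\va,w)=w+(I-P)\nabla\Psi(\va+w)$ with the uniform invertibility of $\mathrm{Id}_Y+(I-P)\nabla^2\Psi$ coming from convexity, the energy comparison $\cl_\va(\bt(\va))\ge\cl_\va(0)$ for \eqref{reduction bdd}, and differentiation of \eqref{reduction equ} for \eqref{reduction derivative bdd}. Your treatment of part $(2)$ — identifying $\nabla\cj(\va)=P\nabla\cl(\va+\bt(\va))$ in one stroke and then using $(I-P)\nabla\cl(\va+\bt(\va))=0$ — is a marginally tidier packaging of the paper's two-sided inequality argument, but it is the same content; the concluding remark on gluing the local IFT charts via global uniqueness is the standard justification that the paper leaves implicit.
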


\begin{proof}
	For a given $\va\in X$, since $\cl_\va$ is strictly concave on $Y$, we find that for any $w\in Y$ and any sequence $\{w_n\}$ in $Y$ such that $w_n\rightharpoonup w$ weakly in $Y$ there holds
	\begin{\equ}\label{upper-semi-c}
	\cl_\va(w)\geq\limsup_{n\to\infty}\cl_\va(w_n).
	\end{\equ}
	By virtue of this observation, thanks to the anti-coerciveness of $\cl_\va$, we can choose a bounded maximizing sequence $\{w_n\}$ of $\cl_\va$ so that $w_n\rightharpoonup w_\va$ weakly in $Y$ for some $w_\va$. Then we can use \eqref{upper-semi-c} to conclude that $w_\va$ is indeed a maximizer of $\cl_\va$. Notice that, for any $c\in\R$, the set $\{w\in Y:\, \cl_\va(w)\geq c\}$ is strictly convex due to the strict concaveness of $\cl_\va$ on $Y$. Hence, $w_\va$ is also the unique critical point of $\cl_\va$ (since any critical point of $\cl_\va$ must be a local maximizer, there can be no more than one of such point). In the sequel, let us write $\bt(\va)=w_\va\in Y$ for $\cl_\va$. Note that since $\bt(\va)$  satisfies $\inp{\nabla\cl_\va(\bt(\va))}{w}=0$ for all $w\in Y$, we then have
	\[
	\bt(\va)+(I-P)\nabla\Psi(\va+\bt(\va))=0.
	\]
	By taking derivative of the left hand side with respect to the slot of $\bt(\va)$, we get a self-adjoint operator on $Y$:
	\begin{\equ}\label{the operator}
		w\mapsto w+(I-P)\nabla^2 \Psi(\va+\bt(\va))[w], \quad \forall w\in Y.
	\end{\equ}
	Since
	\[
	\aligned
	&\inp{w+(I-P)\nabla^2 \Psi(\va+\bt(\va))[w]}{w} \\[0.3em]
	&\qquad = \inp{w+\nabla^2 \Psi(\va+\bt(\va))[w]}{w} \\[0.3em]
	&\qquad \geq
	\|w\|^2   \qquad \text{by the convexity of } \Psi 
	\endaligned
	\]
	for all $w\in Y$, we see that the operator in \eqref{the operator} is invertible and the operator norm of its inverse is bounded by $1$. Hence, the implicit function theorem shows $\bt\in C^1(X,Y)$ and
	\[
	\normm{\nabla\bt(\va)}\leq  \normm{\nabla^2 \Psi(\va+\bt(\va))}.
	\]
	Meanwhile, since $\bt(\va)$ maximizes $\cl_\va$, 
	\[
	\aligned
	0&\leq \cl_{\va}(\bt(\va))-\cl_\va(0) 
	=-\frac12\|\bt(\va)\|^2-\Psi(\va+\bt(\va))+\Psi(\va) \\[0.3em]
	&\leq -\frac12\|\bt(\va)\|^2+\Psi(\va) 
	\qquad \text{by the non-negativeness of }\Psi 
	\endaligned
	\]
	we conclude the boundedness of $\|\bt(\va)\|$. This finishes the proof of $(1)$.
	
	Finally, to check $(2)$, we first rewrite \eqref{reduction equ} as $(I-P)\nabla\cl(\va+\bt(\va))=0$. Note that for $\phi\in\ch$, we have
	\[
	\aligned
	&\inp{\nabla \cl(\va+\bt(\va))}{\phi}  \\[0.3em]
	&\qquad =\inp{\nabla \cl(\va+\bt(\va))}{P\phi}+\inp{\nabla \cl(\va+\bt(\va))}{(I-P)\phi} \\[0.3em]
	&\qquad =\inp{\nabla \cj(\va)}{P\phi}
	\endaligned
	\]
	which implies $\|\nabla \cj(\va)\|\geq \|\nabla\cl(\va+\bt(\va))\|$. And on the other hand, we find for $v\in X$
	\[
	\aligned
	\inp{\nabla \cj(\va)}{v}&=\inp{\nabla\cl(\va+\bt(\va))}{v}+\inp{\nabla\cl(\va+\bt(\va))}{\nabla\bt(\va)[v]} \\[0.3em]
	&=\inp{\nabla\cl(\va+\bt(\va))}{v}
	\endaligned
	\]
	which suggests $\|\nabla \cj(\va)\|\leq \|\nabla\cl(\va+\bt(\va))\|$. Hence we have  $\|\nabla \cj(\va)\|= \|\nabla\cl(\va+\bt(\va))\|$, and this completes the proof.
\end{proof}

It can be seen from Proposition \ref{reduction} that a critical point $\va$ of $\cj$ (if exists) induces a critical point of $\cl$ via the mapping $\va\mapsto \va+\bt(\va)$, and the corresponding values of $\cj$ and $\cl$ coincide. However, the reverse is not necessarily true in general cases unless one can show that the restricted functional $\cl|_{\R^+\va\op Y}$ has at most one (non-trivial) critical point for each $\va\in S^X$ (notice that the uniqueness of $\bt(\va)$ implies that a critical point of $\cl|_{\R^+\va\op Y}$ must be of the form $t\va+\bt(t\va)$ for some $t>0$, in this way, it is sufficient to show that the function $t\mapsto \cj(t\va)$ has only one critical point on $(0,+\infty)$ and this will be seen in Remark \ref{rem:uniquenss-Nehari} below). The advantage of Proposition \ref{reduction} is that it allows us to restrict our attention to the critical points of $\cj$ for which the subspace $Y$ collapse to the graph $\{(\va,\bt(\va)):\, \va\in X\}$. In the sequel, we call $\cj$ the reduced functional of $\cl$. And the following result is an immediate consequence of Proposition \ref{reduction}.

\begin{Cor}\label{reduction-corollary}
	If $\{\va_n\}\subset X$ is a $(PS)$-sequence for the reduced functional $\cj$
	then $\{\va_n+\bt(\va_n)\}$ is a $(PS)$-sequence for $\cl$.
\end{Cor}
\begin{proof}
Since we have $\cj(\va_n)=\cl(\va_n+\bt(\va_n))$ and $\|\nabla\cj(\va_n)\|=\|\nabla\cl(\va_n+\bt(\va_n))\|$, the conclusion follows directly from the definition of a $(PS)$-sequence.
\end{proof}

In what follows, the essence of our analysis is to consider the minimax value
\begin{\equ}\label{ga-al1}
	\ga=\inf_{\va\in X\setminus\{0\}}\max_{t>0}\cj(t\va).
\end{\equ}
Since $\cj(\va)\geq\cl(\va)$ for all $\va\in X$, together with \eqref{Psi-al upper} and \eqref{linking1},  we have the existence of $r>0$ such that
\[
\|\va\|\leq r \Longrightarrow 
\cj(\va)\geq0
\quad \text{and}\quad
\|\va\| = r \Longrightarrow 
\cj(\va)\geq\tau.
\]
It is clear that if there exists additionally $\phi\in X$, $\|\phi\|>r$, such that $\cj(\phi)<0$, then the reduced functional $\cj$ possesses the so-called mountain pass geometry (see \cite{AR, Willem}), and hence $\ga\in[\tau,+\infty)$ will be a candidate energy level of $\cj$ at which there might exist non-trivial critical points (just remind here that the functional $\cj$ could be lack of compactness, but it is still possible to use the deformation lemma \cite[Lemma 2.3]{Willem} to obtain a $(PS)_\ga$-sequence of $\cj$). The mountain pass geometry of $\cj$ will be fulfilled when the $\Psi$-term satisfies stronger assumptions, say for instance, $\Psi(z)$  vanishes if and only if $z=0$ (cf. \cite{Ackermann, BJS, SW2010}). Unfortunately, since the hypothesis (H1)-(H5) are rather weak (we allow $\Psi$ to vanish on certain part of $\ch$), we do not have a clear linking structure for $\cj$. Here, our strategy consists of finding critical point of $\cj$ on the associated Nehari-type manifold $\msn$ defined as
\[
\msn=\big\{ \va\in X\setminus\{0\}:\, \inp{\nabla \cj(\va)}{\va}=0 \big\}.
\]
Different from the usual concept of Nehari manifold, we emphasize here that the set $\msn$ above is not homeomorphic to the sphere $S^X$ in general. In fact we allow that there exists $\va\in X\setminus\{0\}$ such that $\Psi(t\va)\equiv 0$ for all $t>0$, then, by Proposition \ref{reduction}, we have $\bt(t\va)\equiv0$, and hence $\cj(t\va)=\frac{t^2}2\|\va\|^2$, i.e. $\cj(t\va)$ is a quadratic function and  have no critical points for $t\in[0,\infty)$ other than $t=0$. 

\begin{Lem}\label{lemma 1}
	If $\va\in\msn$, then $\Psi(\va)>0$ and $\Psi(\va+\bt(\va))>0$.
\end{Lem}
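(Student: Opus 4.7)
The plan is to translate the Nehari-type condition $\inp{\nabla\cj(\va)}{\va}=0$ into a pointwise identity for $\nabla\Psi$ at $\va+\bt(\va)$, then squeeze the two positivity statements out of hypothesis (H2) together with the convexity/non-negativity of $\Psi$.

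First I invoke part (2) of Proposition \ref{reduction} and the explicit form of $\cl$: since $\nabla\cj(\va)$ and $\nabla\cl(\va+\bt(\va))$ agree when tested against elements of $X$, and $\va\in X$, the Nehari condition collapses to
\[
\|\va\|^2=\inp{\nabla\Psi(\va+\bt(\va))}{\va}.
\]
Next I combine this with the Euler--Lagrange equation $\bt(\va)=-(I-P)\nabla\Psi(\va+\bt(\va))$ from \eqref{reduction equ}. Dotting that equation with $\bt(\va)\in Y$ and using that $P$ is the orthogonal projection onto $X$ yields $\inp{\nabla\Psi(\va+\bt(\va))}{\bt(\va)}=-\|\bt(\va)\|^2$, so that
\[
\|\va\|^2=\inp{\nabla\Psi(\va+\bt(\va))}{\va+\bt(\va)}+\|\bt(\va)\|^2\ \geq\ p\,\Psi(\va+\bt(\va))+\|\bt(\va)\|^2,
\]
where the last step is (H2).

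To deduce $\Psi(\va+\bt(\va))>0$ I argue by contradiction. If $\Psi(\va+\bt(\va))=0$ then, because $\Psi\geq0$ (a consequence of (H2)--(H3) recorded in the excerpt), the point $\va+\bt(\va)$ is a global minimizer of the $C^1$ convex function $\Psi$, and hence $\nabla\Psi(\va+\bt(\va))=0$. The defining relation \eqref{reduction equ} then forces $\bt(\va)=0$, and the displayed inequality above gives $\|\va\|^2=0$, contradicting $\va\neq 0$.

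The second claim $\Psi(\va)>0$ is handled in the same spirit: if $\Psi(\va)=0$ then $\va$ is a global minimizer of $\Psi$, so $\nabla\Psi(\va)=0$; in particular $\bt=0$ solves the fixed-point equation $\bt+(I-P)\nabla\Psi(\va+\bt)=0$, and the uniqueness statement in Proposition \ref{reduction}(1) forces $\bt(\va)=0$. Substituting into the Nehari identity produces $\|\va\|^2=\inp{\nabla\Psi(\va)}{\va}=0$, again contradicting $\va\neq 0$. The only mildly delicate point in the whole argument is the implication ``$\Psi(z)=0\Rightarrow\nabla\Psi(z)=0$,'' which is where the convexity assumption (H3) genuinely enters; everything else is a short algebraic consequence of the reduction scheme and the superquadratic bound (H2).
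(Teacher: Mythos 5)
Your proof is correct and follows essentially the same route as the paper's: assume one of the $\Psi$-values vanishes, use non-negativity/convexity to conclude $\nabla\Psi=0$ there, deduce $\bt(\va)=0$, and then contradict the Nehari condition via $\|\va\|^2>0$. The only cosmetic differences are that you derive the cleaner identity $\|\va\|^2=\inp{\nabla\Psi(\va+\bt(\va))}{\va+\bt(\va)}+\|\bt(\va)\|^2\ge p\Psi(\va+\bt(\va))+\|\bt(\va)\|^2$ along the way, and that you infer $\bt(\va)=0$ from uniqueness of the fixed point rather than from the bound \eqref{reduction bdd}; both are valid and interchangeable.
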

\begin{proof}
	Assume contrarily that $\Psi(\va)=0$, then it follows that $\nabla\Psi(\va)=0$ (since $0$ is the global minimum of $\Psi$). And from \eqref{reduction bdd}, we have $\bt(\va)=0$. Hence
	\[
	\inp{\nabla \cj(\va)}{\va}=\inp{\va-P \nabla\Psi(\va)}{\va}=\|\va\|^2>0,
	\]
	which is a contradiction. 
	
	If $\Psi(\va+\bt(\va))=0$, then it also follows that $\nabla\Psi(\va+\bt(\va))=0$. By \eqref{reduction equ}, we deduce that $\bt(\va)=0$. Thus we get a similar contradiction as above.
\end{proof}

Let us introduce a functional $\ck:X\to\R$ by
\begin{\equ}\label{K-al def}
	\ck(\va)=\inp{\nabla \cj(\va)}{\va}.
\end{\equ}
We have $\ck\in C^1(X,\R)$ and its derivative is given by
\[
\inp{\nabla \ck(\va)}{w}=\inp{\nabla \cj(\va)}{w}+\inp{\nabla^2 \cj(\va)[\va]}{w}
\]
for all $\va,w\in X$. Also, we have $\msn=\ck^{-1}(0)\setminus\{0\}$. 

\begin{Lem}\label{K-al estimate}
	For $\va\in X$,
	\[
	\inp{\nabla \ck(\va)}{\va}\leq 2\ck(\va)-(\ka-1)\inp{\nabla\Psi(\va+\bt(\va))}{\va+\bt(\va)},
	\]
	where $\ka>1$ is the constant in {\rm (H5)}.
\end{Lem}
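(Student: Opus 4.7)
The plan is to compute $\inp{\nabla\ck(\va)}{\va}$ explicitly by the chain rule, rewrite the second-derivative term using the implicit characterization of $\bt$, and then invoke hypothesis (H5) with a carefully chosen test element. Write $z=\va+\bt(\va)$ and $\zeta=\va+\nabla\bt(\va)[\va]$. From the computation at the end of the proof of Proposition \ref{reduction}, $\inp{\nabla\cj(\va)}{v}=\inp{v}{\va}-\inp{\nabla\Psi(z)}{v}$ for $v\in X$, so in particular
\[
\ck(\va)=\|\va\|^2-\inp{\nabla\Psi(z)}{\va}.
\]
Differentiating this once more in direction $\va$ gives
\[
\inp{\nabla\ck(\va)}{\va}=2\|\va\|^2-\inp{\nabla\Psi(z)}{\va}-\inp{\nabla^2\Psi(z)[\zeta]}{\va}.
\]
Subtracting $2\ck(\va)$, the claimed estimate is equivalent to the Hessian-type inequality
\[
\inp{\nabla^2\Psi(z)[\zeta]}{\va}\ \geq\ \inp{\nabla\Psi(z)}{\va}+(\ka-1)\inp{\nabla\Psi(z)}{z}.
\]

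The second step is to exploit $\bt(\va)=-(I-P)\nabla\Psi(z)$. Differentiating this identity in direction $\va$ produces $\nabla\bt(\va)[\va]=-(I-P)\nabla^2\Psi(z)[\zeta]$. Pairing with $\nabla\bt(\va)[\va]\in Y$ yields the key identity
\[
\|\nabla\bt(\va)[\va]\|^2=-\inp{\nabla^2\Psi(z)[\zeta]}{\nabla\bt(\va)[\va]},
\]
which, since $\zeta=\va+\nabla\bt(\va)[\va]$, is equivalent to
\[
\inp{\nabla^2\Psi(z)[\zeta]}{\va}=\inp{\nabla^2\Psi(z)[\zeta]}{\zeta}+\|\nabla\bt(\va)[\va]\|^2.
\]
A parallel consequence of $\bt(\va)=-(I-P)\nabla\Psi(z)$ is that $\inp{\nabla\Psi(z)}{w}=-\inp{\bt(\va)}{w}$ for every $w\in Y$; specializing $w=\bt(\va)$ gives $\inp{\nabla\Psi(z)}{\bt(\va)}=-\|\bt(\va)\|^2$, and writing $\va=z-\bt(\va)$ one obtains $\inp{\nabla\Psi(z)}{\va}=\inp{\nabla\Psi(z)}{z}+\|\bt(\va)\|^2$.

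The final step is to apply (H5) with the choice $w=\nabla\bt(\va)[\va]-\bt(\va)$, picked precisely so that $z+w=\zeta$. Since $w\in Y$, the above reduction of $\inp{\nabla\Psi(z)}{w}$ converts (H5) into
\[
\inp{\nabla^2\Psi(z)[\zeta]}{\zeta}\ \geq\ \ka\inp{\nabla\Psi(z)}{z}+2\|\bt(\va)\|^2-2\inp{\bt(\va)}{\nabla\bt(\va)[\va]}.
\]
Inserting the two identities from the previous paragraph into the target Hessian inequality reduces it to
\[
2\|\bt(\va)\|^2-2\inp{\bt(\va)}{\nabla\bt(\va)[\va]}\ \geq\ \|\bt(\va)\|^2-\|\nabla\bt(\va)[\va]\|^2,
\]
that is, $\|\bt(\va)-\nabla\bt(\va)[\va]\|^2\geq 0$, which is trivially true.

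The only real obstacle is recognizing the precise choice $w=\nabla\bt(\va)[\va]-\bt(\va)$ that realizes $z+w=\zeta$ in (H5); everything else is a bookkeeping exercise using the implicit equation satisfied by $\bt$ and the fact that $\bt(\va),\nabla\bt(\va)[\va]\in Y$. I expect no technical difficulty with convergence or regularity, because $\bt\in C^1(X,Y)$ and $\Psi\in C^2(\ch,\R)$ by hypothesis, so all differentiations above are justified.
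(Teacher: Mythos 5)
Your proof is correct and follows essentially the same route as the paper: both compute $\inp{\nabla^2\cj(\va)[\va]}{\va}$ via the implicit differentiation of $\bt$, both apply (H5) with the same choice $w=\nabla\bt(\va)[\va]-\bt(\va)$ so that $z+w=\zeta$, and both use the $Y$-projection identities $\inp{\nabla\Psi(z)}{\,\cdot\,}=-\inp{\bt(\va)}{\,\cdot\,}$ on $Y$. The term $\|\bt(\va)-\nabla\bt(\va)[\va]\|^2\geq0$ that you reduce to at the end is precisely the $\|w_\va\|^2$ term that the paper's computation produces and then discards.
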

\begin{proof}
	Since
	\begin{\equ}\label{D1}
	\aligned
	\inp{\nabla \cj(\va)}{\phi}&=\inp{\va-P \nabla\Psi(\va+\bt(\va))}{\phi}  \\[0.3em]
	&=\inp{\va}{\phi}-\inp{ \nabla\Psi(\va+\bt(\va))}{\phi}
	\endaligned
	\end{\equ}
	for all $\va,\phi\in X$, it follows directly that
	\begin{\equ}\label{X1}
		\inp{\nabla^2\cj(\va)[\phi]}{\phi}=\|\phi\|^2-\inp{ \nabla^2 \Psi(\va+\bt(\va))\big[\phi+\nabla\bt(\va)[\phi]\big]}{\phi}.
	\end{\equ}
	By taking derivative with respect to $\va$ in the relation \eqref{reduction equ}, we have
	\begin{\equ}\label{reduction equ derivative}
		 \nabla\bt(\va)[\va] =-(I-P)\nabla^2 \Psi(\va+\bt(\va))\big[\va+\nabla\bt(\va)[\va]\big]
	\end{\equ}
	for $\va\in X$. For ease of notations, we denote $z_\va=\va+\bt(\va)$ and $w_\va=\nabla\bt(\va)[\va]-\bt(\va)$. Then we can deduce from \eqref{reduction equ derivative} that
	\begin{\equ}\label{D2}
	\big\|\nabla\bt(\va)[\va]\big\|^2=-\inp{\nabla^2\Psi(z_\va)[z_\va+w_\va]}{\nabla\bt(\va)[\va]}.
	\end{\equ}
    Combining \eqref{D1}, \eqref{X1} and the relations \eqref{reduction equ} and \eqref{D2}, we obtain
	\begin{eqnarray*}
	\inp{\nabla^2\cj(\va)[\va]}{\va}&\overset{\eqref{X1}}{=}&\|\va\|^2-\inp{ \nabla^2 \Psi(z_\va)[z_\va+w_\va]}{\va} \\[0.3em]
	&\overset{\eqref{D1}}{=}&\inp{\nabla \cj(\va)}{\va} + \inp{\nabla\Psi(z_\va)}{\va}-\inp{ \nabla^2 \Psi(z_\va)[z_\va+w_\va]}{\va} \\[0.3em]
	&\overset{\eqref{D2}}{=}&\inp{\nabla \cj(\va)}{\va}-\inp{ \nabla^2 \Psi(z_\va)[z_\va+w_\va]}{z_\va+w_\va} + \inp{\nabla\Psi(z_\va)}{\va}\\[0.3em]
	& & - \big\|\nabla\bt(\va)[\va]\big\|^2 \\[0.3em]
	&=&   \inp{\nabla \cj(\va)}{\va}-\inp{ \nabla^2 \Psi(z_\va)[z_\va+w_\va]}{z_\va+w_\va} + \inp{\nabla\Psi(z_\va)}{z_\va}\\[0.3em]
	& & - \inp{\nabla\Psi(z_\va)}{\bt(\va)} - \big\|\nabla\bt(\va)[\va]\big\|^2 \\[0.3em]
	&\overset{\eqref{reduction equ}}{=}&  \inp{\nabla \cj(\va)}{\va}-\inp{ \nabla^2 \Psi(z_\va)[z_\va+w_\va]}{z_\va+w_\va} + \inp{\nabla\Psi(z_\va)}{z_\va}\\[0.3em]
	& & + \|\bt(\va)\|^2 - \big\|\nabla\bt(\va)[\va]\big\|^2 \\[0.3em]
	&\overset{\eqref{reduction equ}}{=}&\inp{\nabla \cj(\va)}{\va} - \inp{ \nabla^2 \Psi(z_\va)[z_\va+w_\va]}{z_\va+w_\va} + 2\inp{\nabla\Psi(z_\va)}{w_\va}\\[0.3em]
	& & + \inp{\nabla\Psi(z_\va)}{z_\va}-\|w_\va\|^2 \\[0.3em]
	& \overset{\text{(H5)}}{\leq} &\inp{\nabla \cj(\va)}{\va} -(\ka-1)\inp{\nabla\Psi(z_\va)}{z_\va}-\|w_\va\|^2
	\end{eqnarray*}
	for $\va\in X$. Hence the assertion follows.
\end{proof}

It follows from the above two  lemmas and the second part of (H2) that, for $\va\in\msn$, $\nabla \ck(\va)\neq 0$. Hence $\msn$ is locally a $C^1$-manifold. Moreover, if $\va\in\msn$ is a constrained critical point of $\cj$, then $\va$ is also a critical point of $\cj$ on $X$. Indeed, according to the Lagrange multiplier rule, we have the existence of $\lm\in\R$ such that $\nabla \cj(\va)=\lm\nabla \ck(\va)$. Note that $\va\in\msn$, we find $\inp{\nabla \cj(\va)}{\va}=0$. Together with Lemma \ref{K-al estimate}, this implies $\lm=0$ and $\va$ is a critical point of $\cj$ on $X$.

\begin{Rem}\label{rem:uniquenss-Nehari}
For later use, we also mention that if $\msn\neq\emptyset$ then $\msn$ is bounded away from $0$, i.e., there exists $r_0>0$ such that $\|\va\|\geq r_0$ for all $\va\in\msn$. In fact, for a given $\va\in S^X$, let us consider the function $t\mapsto \cj(t\va)$, $t\geq0$. Then we have
\[
\frac{d}{dt}\cj(t\va)=\inp{\nabla \cj(t\va)}{\va}=\frac{\ck(t\va)}t 
\]
and 
\[
\frac{d^2}{dt^2}\cj(t\va)=\inp{\nabla^2 \cj(t\va)[\va]}{\va}=\frac1{t^2}\big( \inp{\nabla \ck(t\va)}{t\va}-\ck(t\va) \big).
\]
It follows from Lemmas \ref{lemma 1}, \ref{K-al estimate} and the second part of (H2) that if there is $t_\va>0$ such that $\ck(t_\va\va)=0$, i.e., $t_\va\va\in\msn$, then $\frac{d^2}{dt^2}\cj(t\va)\big|_{t=t_\va}<0$. And hence $t_\va$ is a strict local maximum point. We actually have more. In fact, by Lemmas \ref{lemma 1} and \ref{K-al estimate}, we can see that $\ck(t\va)<0$ for all $t>t_\va$. Therefore, $t_\va$ should be the unique critical point of $\cj(t\va)$, $t>0$, and hence the global maximum point. Now, by \eqref{linking1}, we find $\max_{t>0}\cj(t\va)\geq\tau$, and this implies the existence of $r_0>0$ such that $t_\va\geq r_0$.
\end{Rem}

We also note that the min-max value in \eqref{ga-al1} has the following equivalent characterizations
\begin{\equ}\label{ga-al2}
	\ga=\inf_{\va\in X\setminus\{0\}}\max_{t>0}\cj(t\va)=\inf_{\va\in X\setminus\{0\}}\max_{z\in\R^+\va\op Y}\cl(z)
	=\inf_{\va\in\msn}\cj(\va).
\end{\equ}
Particularly,  $\ga<+\infty$ provided that $\msn\neq\emptyset$.

Next, let us consider a sequence of ``almost critical points" $\{z_n\}\subset\ch$ such that 
\begin{\equ}\label{key assumption}
	c_1\leq \cl(z_n)\leq c_2 \quad \text{and} \quad 
	\|\nabla\cl(z_n)\|\to0 \quad \text{as } n\to\infty,
\end{\equ}
for some fixed constants $c_1,c_2>0$. In this setting, $\{z_n\}$ is nothing but a generic Palais-Smale sequence for $\cl$. 

\begin{Lem}\label{lemma 3}
	Under the condition \eqref{key assumption}, we have:
	\begin{itemize}
		\item[$(1)$] $\|z_n\|$ is uniformly bounded in $n$;
		
		\item[$(2)$] $\|z_n^Y-\bt(z_n^X)\|\leq \|\nabla\cl(z_n)\|$ as $n\to\infty$;
		
		\item[$(3)$] $\nabla \cj(z_n^X)\to0$ as $n\to\infty$.
	\end{itemize}
\end{Lem}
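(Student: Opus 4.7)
The plan is to prove the three claims in order, using only the reduction identities of Proposition \ref{reduction} together with the hypotheses (H1)--(H4); notice that (H5) plays no role here.

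For (1), I use the standard bootstrap for strongly indefinite Palais--Smale sequences. Combining $c_1\leq\cl(z_n)\leq c_2$ with the identity
\[
2\cl(z_n)-\inp{\nabla\cl(z_n)}{z_n}=-2\Psi(z_n)+\inp{\nabla\Psi(z_n)}{z_n}\geq(p-2)\Psi(z_n)
\]
coming from (H2) yields $\inp{\nabla\Psi(z_n)}{z_n}\leq C_1\bigl(1+\|\nabla\cl(z_n)\|\|z_n\|\bigr)$. Testing $\nabla\cl(z_n)$ against $z_n^X-z_n^Y$ recovers the full norm $\|z_n\|^2$ from the indefinite quadratic part, so $\|z_n\|\leq\|\nabla\cl(z_n)\|+\|\nabla\Psi(z_n)\|$. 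Inserting (H4) and the previous bound, the term $K\inp{\nabla\Psi(z_n)}{z_n}^\mu$ grows at worst like $\|z_n\|^\mu$; since $\mu<1$, Young's inequality absorbs this into the left-hand side and delivers a uniform bound on $\|z_n\|$.

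For (2), I exploit that $\bt(z_n^X)$ is the unique zero on $Y$ of the map
\[
f\colon Y\longrightarrow Y,\qquad f(w)=w+(I-P)\nabla\Psi(z_n^X+w).
\]
The convexity (H3) gives $\nabla^2\Psi\geq 0$ and hence $\inp{f(u)-f(v)}{u-v}\geq\|u-v\|^2$ on $Y$, so $\|f(u)-f(v)\|\geq\|u-v\|$. A direct computation using the defining formula for $\nabla\cl$ on elements of $Y$ shows $(I-P)\nabla\cl(z_n)=-f(z_n^Y)$, while $f(\bt(z_n^X))=0$ by \eqref{reduction equ}. Therefore
\[
\|z_n^Y-\bt(z_n^X)\|\leq\|f(z_n^Y)-f(\bt(z_n^X))\|=\|(I-P)\nabla\cl(z_n)\|\leq\|\nabla\cl(z_n)\|,
\]
which is exactly (2).

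For (3), the starting point is $\nabla\cj(z_n^X)=z_n^X-P\nabla\Psi(z_n^X+\bt(z_n^X))$. Restricting $\nabla\cl(z_n)$ to $X$ gives $P\nabla\cl(z_n)=z_n^X-P\nabla\Psi(z_n)$, which lets me rewrite
\[
\nabla\cj(z_n^X)=P\nabla\cl(z_n)+P\bigl[\nabla\Psi(z_n^X+z_n^Y)-\nabla\Psi(z_n^X+\bt(z_n^X))\bigr].
\]
The first summand vanishes by hypothesis. For the second, the boundedness of $\|z_n\|$ from (1), the estimate \eqref{reduction bdd} on $\|\bt(z_n^X)\|$, and hypothesis (H1) produce a uniform bound on $\|\nabla^2\Psi\|$ along the segment joining $z_n^X+z_n^Y$ and $z_n^X+\bt(z_n^X)$; the mean-value inequality then dominates this term by a multiple of $\|z_n^Y-\bt(z_n^X)\|$, which tends to zero by (2). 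The only real technical obstacle sits in step (1), where the interplay between the quadratic growth allowed by (H2) and the $\mu$-power bound in (H4) has to be balanced via Young's inequality, with $\mu<1$ being indispensable for the absorption argument to close.
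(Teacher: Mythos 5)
Your argument tracks the paper's proof step by step: (1) controls $\inp{\nabla\Psi(z_n)}{z_n}$ from the Palais--Smale bounds via (H2) and then absorbs $\|z_n\|$ through (H4) and $\mu<1$; (2) uses the monotonicity of $\nabla\Psi$ coming from convexity to compare $z_n^Y$ with $\bt(z_n^X)$; (3) reduces to (1), (2), and a mean-value estimate using (H1). Your packaging of step (2) through the strictly monotone map $f(w)=w+(I-P)\nabla\Psi(z_n^X+w)$, with $\|f(u)-f(v)\|\geq\|u-v\|$, is a clean reformulation of the paper's direct expansion of $\inp{\nabla\cl(z_n)}{w_n}$, and your observation that (H5) is not used is correct. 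One precision slip worth fixing: in (1) the displayed chain ends with the lower bound $(p-2)\Psi(z_n)$, which only controls $\Psi(z_n)$; to get the bound on $\inp{\nabla\Psi(z_n)}{z_n}$ that (H4) actually requires, you should substitute $\Psi(z_n)\leq\tfrac1p\inp{\nabla\Psi(z_n)}{z_n}$ into the middle term of the identity, giving $2\cl(z_n)-\inp{\nabla\cl(z_n)}{z_n}\geq\tfrac{p-2}{p}\inp{\nabla\Psi(z_n)}{z_n}$, which is the estimate the paper derives. Likewise, in (3) the first summand $P\nabla\cl(z_n)$ does not vanish but only tends to zero; the conclusion is unaffected.
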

\begin{proof}
	We can derive from \eqref{key assumption} and (H2) that
	\[
	\aligned
	\inp{\nabla\Psi(z_n)}{z_n}&=\|z_n^X\|^2-\|z_n^Y\|^2-\inp{\nabla\cl(z_n)}{z_n}   \\[0.3em]
	&=2\cl(z_n)+2\Psi(z_n)+o_n(1)\|z_n\| \\
	&\leq 2c_2 +\frac2p\inp{\nabla\Psi(z_n)}{z_n}+o_n(1)\|z_n\|.
	\endaligned
	\]
	Hence
	\[
	\inp{\nabla\Psi(z_n)}{z_n}\leq \frac p{p-2}\big( 2c_2+o_n(1)\|z_n\| \big)
	\]
	and so, by (H4),
	\[
	\aligned
	\|z_n\|&= \|z_n^X-z_n^Y\| \leq \|\nabla\cl(z_n)\|+\|\nabla\Psi(z_n)\| \\[0.3em]
	&\leq o_n(1)+ K\inp{\nabla\Psi(z_n)}{z_n}^\mu  +\mu\|z_n\| \\
	&\leq o_n(1)+ K \frac{p^\mu}{(p-2)^\mu}\big( 2c_2+o_n(1)\|z_n\| \big)^\mu+\mu\|z_n\| .
	\endaligned
	\]
	Since $\mu<1$, this implies the boundedness of $\|z_n\|$.
	
	To check $(2)$, let us set $\phi_n=z_n^X+\bt(z_n^X)$ and $w_n=z_n-\phi_n$. Then, we have $w_n=z_n^Y-\bt(z_n^X)\in Y$. Because of the relation \eqref{reduction equ} and the fact $\bt(z_n^X)$ is the maximum of $\cl_{z_n^X}$, we see that
	\[
	0=\inp{\nabla\cl(\phi_n)}{w_n}=-\inp{\phi_n^Y}{w_n}-\inp{\nabla\Psi(\phi_n)}{w_n}.
	\]
	Since 
	\[
	\inp{\nabla\cl(z_n)}{w_n}=-\inp{z_n^Y}{w_n}-\inp{\nabla\Psi(z_n)}{w_n},
	\]
	it follows that
	\begin{\equ}\label{X2}
		\inp{\nabla\cl(z_n)}{w_n}=-\|w_n\|^2+\inp{\nabla\Psi(\phi_n)}{w_n}-\inp{\nabla\Psi(z_n)}{w_n}
	\end{\equ}
	where the last two summands add up to something non-positive. Indeed, by the convexity in (H3), we can see that
\[
\inp{\nabla\Psi(z_n)}{w_n}-\inp{\nabla\Psi(\phi_n)}{w_n}=\inp{\nabla^2 \Psi(\phi_n+\theta_n w_n)[w_n]}{w_n}\geq0
\]
for some $\theta_n\in(0,1)$. Together with  \eqref{X2}, this implies
\[
\|w_n\|^2\leq \|\nabla\cl(z_n)\|\cdot\|w_n\|,
\]
and hence $\|w_n\|\leq \|\nabla\cl(z_n)\|$ as $n\to\infty$.

To see $(3)$, it follows from $(2)$ and the $C^2$-smoothness of $\cl$ that
\[
\|\nabla\cl(\phi_n)-\nabla\cl(z_n)\|\leq \|\nabla^2\cl(\phi_n+\tilde\theta_nw_n)[w_n]\|=O(\|w_n\|)
\] 
where $\tilde\theta_n\in(0,1)$. Here we have used the uniform boundedness of $\nabla^2\cl(\phi_n+\tilde\theta_nw_n)$. And thus we get
$\|\nabla \cl(\phi_n)\|\to0$ as $n\to\infty$. Finally, since $\|\nabla \cj(z_n^X)\|=\|\nabla\cl(\phi_n)\|$, we have that $\|\nabla \cj(z_n^X)\|\to0$ as $n\to\infty$.
\end{proof}

The next lemma guarantees that the infimum in \eqref{ga-al2} is well-defined, i.e.,  $\ga<\infty$.
\begin{Lem}\label{lemma 4}
	Under the condition \eqref{key assumption}, we have $\msn\neq \emptyset$. In particular, there exists $t_n>0$ such that $t_nz_n^X\in\msn$ and $|t_n-1|=O(\|\nabla \cj(z_n^X)\|)$ as $n\to\infty$.
\end{Lem}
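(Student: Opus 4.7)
The plan is, for each large $n$, to study the scalar function $h_n(t) := \cj(t z_n^X)$ on $(0,\infty)$ and to prove via a quantitative implicit function argument that $h_n'$ has a unique positive zero $t_n$ lying near $t=1$. Since $h_n'(t) = \ck(tz_n^X)/t$, zeros of $h_n'$ correspond exactly to the points of $\msn$ on the ray $\R^+ z_n^X$, and the required bound $|t_n - 1| = O(\|\nabla \cj(z_n^X)\|)$ will follow from an invertible linearization of $h_n'$ at $t=1$, controlled uniformly in $n$.

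The first step is to show that $\cj(z_n^X)$ is bounded below by a positive constant. Setting $\phi_n := z_n^X + \bt(z_n^X)$ and $w_n := z_n - \phi_n = z_n^Y - \bt(z_n^X)$, a second-order Taylor expansion of $\cl$ about $\phi_n$, combined with $\|w_n\| = O(\|\nabla \cl(z_n)\|)$ from Lemma \ref{lemma 3}(2), gives $\cj(z_n^X) = \cl(\phi_n) = \cl(z_n) + O(\|\nabla \cl(z_n)\|^2)$, hence $\cj(z_n^X) \geq c_1/2$ for all large $n$. Next, from the characterization $\bt(\va) = -(I - P) \nabla \Psi(\phi_\va)$ in \eqref{reduction equ}, one derives the Pohozaev-type identity
\[
\cj(\va) - \tfrac{1}{2} \inp{\nabla \cj(\va)}{\va} = -\Psi(\phi_\va) + \tfrac{1}{2}\inp{\nabla \Psi(\phi_\va)}{\phi_\va},
\]
valid for every $\va \in X$ with $\phi_\va := \va + \bt(\va)$. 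Since $\Psi \geq 0$, evaluating at $\va = z_n^X$ and using $\inp{\nabla \cj(z_n^X)}{z_n^X} = o(1)$ from Lemma \ref{lemma 3}(3) yields $\inp{\nabla \Psi(\phi_n)}{\phi_n} \geq c_1 - o(1)$, uniformly positive for large $n$.

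Feeding this into Lemma \ref{K-al estimate} at $\va = z_n^X$, and again using $\ck(z_n^X) = o(1)$, produces
\[
h_n''(1) = \inp{\nabla^2 \cj(z_n^X)[z_n^X]}{z_n^X} \leq \ck(z_n^X) - (\ka - 1)\inp{\nabla \Psi(\phi_n)}{\phi_n} \leq -\de_0
\]
for some $\de_0 > 0$ depending only on $c_1$ and $\ka$, uniformly in large $n$. By (H1) and the uniform bound $\|z_n^X\| \leq R$ from Lemma \ref{lemma 3}(1), the explicit expression for $\nabla^2 \cj$ established in the proof of Lemma \ref{K-al estimate} shows that $t \mapsto h_n''(t)$ admits a modulus of continuity which is uniform in $n$ on any compact $t$-interval; hence there is $\eta > 0$, independent of $n$, such that $h_n''(t) \leq -\de_0/2$ for $t \in [1 - \eta, 1 + \eta]$. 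Combined with $h_n'(1) = \ck(z_n^X) = o(1)$, the intermediate value theorem then delivers a (unique, by strict concavity) $t_n \in (1 - \eta, 1 + \eta)$ with $h_n'(t_n) = 0$; the mean-value estimate $|t_n - 1| \leq 2|h_n'(1)|/\de_0$ together with $|h_n'(1)| \leq R\|\nabla \cj(z_n^X)\|$ then gives $|t_n - 1| = O(\|\nabla \cj(z_n^X)\|)$. Finally, $\|z_n^X\|$ is bounded below (otherwise $\cj(z_n^X) \to \cj(0) = 0$), so $t_n z_n^X \neq 0$, and therefore $t_n z_n^X \in \msn$; in particular $\msn \neq \emptyset$.

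The principal obstacle is securing the uniform-in-$n$ modulus of continuity for $t \mapsto h_n''(t)$ near $t = 1$; this is handled by combining the explicit expression for $\nabla^2 \cj$ with the growth bound $\normm{\nabla^2 \Psi(z)} \leq \varrho(\|z\|)$ of (H1), applied on the bounded set $\{t z_n^X + \bt(t z_n^X) : |t-1| \leq 1,\ n \in \N\}$, whose boundedness follows from \eqref{reduction bdd} and Lemma \ref{lemma 3}(1).
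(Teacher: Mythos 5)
Your overall strategy—studying the scalar function along the ray $\R^+ z_n^X$, using Lemma \ref{K-al estimate} to produce a definite negative second derivative near $t=1$, then invoking the intermediate value theorem and a quantitative mean-value bound—is precisely the paper's strategy (the paper works with $\eta_n(t)=\ck(tz_n^X)=t\,h_n'(t)$ and shows $\eta_n'(t)\leq-\nu$, which is an equivalent packaging). Your derivation of the lower bound $\inp{\nabla\Psi(\phi_n)}{\phi_n}\geq c_1-o(1)$ via the Pohozaev-type identity is a clean alternative to the paper's contradiction argument using (H4), and it is correct: the identity
\[
\cj(\va)-\tfrac12\inp{\nabla\cj(\va)}{\va}=-\Psi(\phi_\va)+\tfrac12\inp{\nabla\Psi(\phi_\va)}{\phi_\va}
\]
follows from \eqref{reduction equ}, which gives $\inp{\nabla\Psi(\phi_\va)}{\bt(\va)}=-\|\bt(\va)\|^2$. (You could in fact bypass the second-order Taylor step and use directly that $\cj(z_n^X)=\cl(\phi_n)\geq\cl(z_n)\geq c_1$, since $\bt(z_n^X)$ maximizes $\cl_{z_n^X}$; the paper uses this one-line bound.)

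However, there is a genuine gap in the step where you claim that $t\mapsto h_n''(t)$ admits a modulus of continuity uniform in $n$, citing (H1). Hypothesis (H1) asserts only the growth bound $\normm{\nabla^2\Psi(z)}\leq\varrho(\|z\|)$; it gives boundedness of $\nabla^2\Psi$ on bounded sets, not uniform continuity. Since $\Psi$ is merely $C^2$ (not $C^{2,\alpha}$) and bounded sets in $\ch$ are not compact, continuity of $\nabla^2\Psi$ on bounded sets need not be uniform, so you cannot conclude from (H1) that $\nabla^2\Psi(\psi_{n,t})-\nabla^2\Psi(\psi_{n,1})\to0$ uniformly in $n$ as $t\to1$, nor that $\nabla\bt(tz_n^X)-\nabla\bt(z_n^X)\to0$ uniformly. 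The explicit formula for $\nabla^2\cj$ from \eqref{X1} involves both of these. The fix is not to propagate the bound at $t=1$ by continuity of $h_n''$, but to apply Lemma \ref{K-al estimate} directly at $\va=tz_n^X$: it gives $t^2h_n''(t)\leq\ck(tz_n^X)-(\ka-1)\inp{\nabla\Psi(\psi_{n,t})}{\psi_{n,t}}$, and both $t\mapsto\ck(tz_n^X)$ and $t\mapsto\inp{\nabla\Psi(\psi_{n,t})}{\psi_{n,t}}$ have \emph{first} derivatives uniformly bounded in $n$ (this only requires boundedness of $\nabla^2\Psi$, which (H1) does supply), so they lie within a uniform $O(|t-1|)$ of their values at $t=1$. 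This is exactly what the paper's inequality \eqref{X4} encodes, and it gives the needed negative bound $h_n''(t)\leq-\de_0/2$ on a fixed interval around $t=1$ without invoking a modulus of continuity for $h_n''$.
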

\begin{proof}
	Let's use the same notation $\phi_n=z_n^X+\bt(z_n^X)$ as in the previous lemma. Then, from \eqref{key assumption} and Lemma \ref{lemma 3} $(3)$, it follows that
	\begin{\equ}\label{X3}
		\liminf_{n\to\infty}\inp{\nabla\Psi(\phi_n)}{\phi_n}\geq c_0
	\end{\equ}
	for some constant $c_0>0$. In fact, we have $\inp{\nabla\Psi(\phi_n)}{\phi_n}>0$ from (H2). Furthermore, if  $\inp{\nabla\Psi(\phi_n)}{\phi_n}\to 0$ as $n\to\infty$ then 
	\[
	\aligned
	\|\phi_n\|&=\|\phi_n^X-\phi_n^Y\|\leq \|\nabla \cl(\phi_n)\| + \|\nabla\Psi(\phi_n)\| \\[0.3em]
	&\leq \|\nabla \cl(\phi_n)\| + K\inp{\nabla\Psi(\phi_n)}{\phi_n}^\mu+\mu\|\phi_n\| \quad \text{by (H4)} \\[0.3em]
	&=o_n(1)+\mu\|\phi_n\|.
	\endaligned
	\]
	Since $\mu<1$, we find $\|\phi_n\|\to0$ as $n\to\infty$. This contradicts to $\cl(\phi_n)\geq \cl(z_n)\geq c_1>0$ as assumed in \eqref{key assumption}.
	
	Let us set $\eta_n:[0,\infty)\to\R$ by $\eta_n(t)=\ck(tz_n^X)$, where $\ck$ is given by \eqref{K-al def}. One easily checks that $\eta_n'(t)=\inp{\nabla \ck(tz_n^X)}{z_n^X}$. And hence, by Lemma \ref{K-al estimate}, the Taylor's expansion of $\eta_n$ around $t=1$ and the continuity of $\nabla^2\Psi$ and $\nabla\bt$, we get
	\begin{\equ}\label{X4}
	\aligned
		t\eta_n'(t)&\leq 2\eta_n(t)-(\ka-1)\inp{\nabla\Psi(tz_n^X+\bt(tz_n^X))}{tz_n^X+\bt(tz_n^X)} \\[0.3em]
		&\leq 2\eta_n(1) - (\ka-1)\inp{\nabla\Psi(\phi_n)}{\phi_n} + O(|t-1|)
	\endaligned
	\end{\equ}
	where the $O(|t-1|)$ term is independent of $n$ due to the uniform boundedness of $\eta_n'(t)$ in both $n$ and $t$ in a neighborhood of $1$ (this is a direct consequence of Lemma \ref{lemma 3} $(1)$).
	
	Note that $\eta_n(1)=\inp{\nabla \cj(z_n^X)}{z_n^X}\to0$ (by Lemma \ref{lemma 3} $(3)$), we can conclude from \eqref{X3} and \eqref{X4} that there exists a small constant $\nu>0$ such that
	\[
	\eta_n'(t)\leq -\nu \text{ for all } t\in[1-\nu,1+\nu] \text{ and } n \text{ large enough.}
	\] 
	In particular, $\eta_n(1-\nu)\geq\eta_n(1)+\nu^2>0$ and $\eta_n(1+\nu)\leq \eta_n(1)-\nu^2<0$ for $n$ large. Then, by the Inverse Function Theorem, $t_n=\eta_n^{-1}(0)$ exists and $t_nz_n^X\in\msn$ for $n$ large enough. Furthermore, since $|\eta_n'(t)^{-1}|\leq \frac1\nu$ on $[1-\nu,1+\nu]$, we consequently get
	\[
	|t_n-1|=\big|\eta_n^{-1}(0)-\eta_n^{-1}(\ck(z_n^X))\big|\leq \frac1\nu |\ck(z_n^X)|.
	\]
	Now the conclusion follows from $\ck(z_n^X)=O(\|\nabla \cj(z_n^X)\|)$.
\end{proof}

Combining Lemma \ref{lemma 1} to \ref{lemma 4}, we have the following general result, providing us an upper bound estimate for the minimax value in \eqref{ga-al2}.

\begin{Thm}\label{abstract thm}
	Under the hypotheses {\rm (H1)} to {\rm (H5)}. If there exists $\{z_n\}\subset\ch$ verifying \eqref{key assumption}, then the min-max value $\ga>0$ defined in \eqref{ga-al2} satisfies
	\[
	\ga\leq \cl(z_n)+O(\|\nabla\cl(z_n)\|^2) \quad \text{as } n\to\infty.
	\]
	If, additionally, the functional $\cl$ satisfies the Palais-Smale condition at the energy level $\ga$, then there exists $z\in\ch\setminus\{0\}$ such that $\cl(z)=\ga$ and $\nabla\cl(z)=0$.
\end{Thm}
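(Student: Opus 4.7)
The strategy is to lift $z_n$ to a comparison element in $\msn$ whose reduced-functional value differs from $\cl(z_n)$ only at second order, and then invoke the characterization $\ga=\inf_\msn \cj$ from \eqref{ga-al2}. By Lemma \ref{lemma 4}, for all large $n$ there exists $t_n>0$ with $t_n z_n^X\in\msn$ and $|t_n-1|=O(\|\nabla\cj(z_n^X)\|)$; consequently $\ga\le \cj(t_n z_n^X)$.

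To compare $\cj(t_n z_n^X)$ with $\cl(z_n)$, I perform two Taylor expansions. First, for $f(t):=\cj(tz_n^X)$ around $t=1$: $f'(1)=\ck(z_n^X)=O(\|\nabla\cj(z_n^X)\|)$ thanks to the uniform bound on $\|z_n^X\|$ from Lemma \ref{lemma 3}(1), while $\nabla^2\cj$ is bounded on bounded subsets of $X$ by (H1) together with \eqref{reduction bdd}--\eqref{reduction derivative bdd}, so
\[
\cj(t_n z_n^X)=\cj(z_n^X)+O(\|\nabla\cj(z_n^X)\|^2).
\]
Second, set $\phi_n:=z_n^X+\bt(z_n^X)$ and $w_n:=z_n-\phi_n=z_n^Y-\bt(z_n^X)\in Y$; because $\bt(z_n^X)$ is the unique maximiser of $\cl$ on $z_n^X+Y$, the first-order term $\inp{\nabla\cl(\phi_n)}{w_n}$ vanishes, and combining the uniform Hessian bound on $\cl$ (again from (H1)) with $\|w_n\|=O(\|\nabla\cl(z_n)\|)$ from Lemma \ref{lemma 3}(2) yields
\[
\cl(z_n)=\cj(z_n^X)+O(\|\nabla\cl(z_n)\|^2).
\]
Finally, Proposition \ref{reduction}(2) gives $\|\nabla\cj(z_n^X)\|=\|\nabla\cl(\phi_n)\|$, and the standard Lipschitz bound combined with $\|w_n\|=O(\|\nabla\cl(z_n)\|)$ implies $\|\nabla\cj(z_n^X)\|=O(\|\nabla\cl(z_n)\|)$; chaining these three estimates delivers the desired $\ga\leq\cl(z_n)+O(\|\nabla\cl(z_n)\|^2)$.

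For the second assertion I start from a minimising sequence $\{\va_k\}\subset\msn$ with $\cj(\va_k)\to\ga$. Since $\msn$ is locally a $C^1$-manifold (as noted after Lemma \ref{K-al estimate}, $\nabla\ck(\va)\neq 0$ on $\msn$), Ekeland's variational principle lets us assume $\|\nabla(\cj|_\msn)(\va_k)\|\to 0$. The associated Lagrange multipliers $\lm_k$ satisfy $\nabla\cj(\va_k)-\lm_k\nabla\ck(\va_k)\to 0$; pairing with $\va_k$ and using $\ck(\va_k)=0$ together with Lemma \ref{K-al estimate} and the identity $\inp{\nabla\Psi(\va_k+\bt(\va_k))}{\va_k+\bt(\va_k)}\geq 2\cj(\va_k)\to 2\ga>0$ (which holds on $\msn$ by combining $\ck(\va)=0$ with the positivity $\Psi\geq 0$) forces $\lm_k\to 0$. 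Hence $\nabla\cj(\va_k)\to 0$ in $X$, and by Proposition \ref{reduction}(2) the sequence $\{\va_k+\bt(\va_k)\}$ is a $(PS)_\ga$-sequence for $\cl$. The PS hypothesis at level $\ga$ then produces a subsequential limit $z$ with $\cl(z)=\ga$ and $\nabla\cl(z)=0$; since $\ga\geq\tau>0=\cl(0)$ by \eqref{linking1}, we have $z\neq 0$.

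The principal obstacle is maintaining the precise quadratic error order through both Taylor expansions and the gradient conversion between $\cj$ at $z_n^X$ and $\cl$ at $z_n$; this hinges on the uniform Hessian controls supplied by (H1) together with the structural estimates of Proposition \ref{reduction}, which must be applied on the bounded set $\{z_n\}\cup\{\phi_n\}$ guaranteed by Lemma \ref{lemma 3}(1)--(2).
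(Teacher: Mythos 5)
Your proof is correct and takes essentially the same approach as the paper: it uses Lemma \ref{lemma 4} to place $t_nz_n^X$ on $\msn$, converts gradient sizes between $\cj$ at $z_n^X$ and $\cl$ at $z_n$, and controls the resulting correction via quadratic Taylor estimates, differing only in that you perform two smaller Taylor expansions (around $t=1$ for $\cj$ and around $\phi_n$ for $\cl$) where the paper collapses the estimate into a single expansion of $\cl$ around $\psi_n=t_nz_n^X+\bt(t_nz_n^X)$ together with the observation $\inp{\nabla\cl(\psi_n)}{z_n-\psi_n}=0$. For the existence claim your Ekeland-plus-Lagrange-multiplier argument makes explicit what the paper leaves implicit when it asserts that a minimizing sequence on $\msn$ yields a $(PS)_\ga$-sequence.
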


\begin{Rem}
	It is easy to prove that, under the hypotheses of Theorem \ref{abstract thm}, there is a natural upper bound of $\ga$, that is the constant $c_2$ in the assumption \eqref{key assumption}. However, this is not enough since sometimes it is crucial to have a more explicit asymptotic characterization for the energies (see for instance the resolution of classical Yamabe problem and its variations, elliptic Brezis-Nirenberg problem, etc.). As we will see later in the applications, the sequence $\{z_n\}$ in Theorem \ref{abstract thm} plays the  role of certain test elements for the functional  $\cl$. Our result shows a refined upper bound estimate for the min-max value $\ga$, and  makes it computable whenever $\{z_n\}$ is explicitly constructed. 
\end{Rem}

\begin{proof}[Proof of Theorem \ref{abstract thm}]
	
	We set $\phi_n=z_n^X+\bt(z_n^X)$ as before and $t_n>0$ as was in Lemma \ref{lemma 4}, we also let $\psi_n=t_nz_n^X+\bt(t_nz_n^X)$. It follows from Lemma \ref{lemma 3} and \ref{lemma 4} that
	\begin{\equ}\label{X5}
		\aligned
		\|z_n-\psi_n\|&\leq \|z_n-\phi_n\|+|t_n-1|\|z_n^X\|  +\|\bt(z_n^X)-\bt(t_nz_n^X)\| \\[0.3em]
		&=O(\|\nabla\cl(z_n)\|)+O(\|\nabla \cj(z_n^X)\|)
		\endaligned
	\end{\equ}
	where we have used the inequality
	\[
	\|\bt(z_n^X)-\bt(t_nz_n^X)\| \leq \normm{\nabla\bt(\theta_nz_n^X)}\cdot\|z_n^X\|\cdot|t_n-1|
	\]
	for some $\theta_n$ between $t_n$ and $1$. Since we have $\|\nabla \cj(z_n^X)\|=\|\nabla\cl(\phi_n)\|$ (by Proposition \ref{reduction} $(2)$), we can see from Lemma \ref{lemma 3} that
	\[
	\|\nabla \cj(z_n^X)\|\leq
	\|\nabla\cl(z_n)\| + O(\|\phi_n-z_n\|)=O(\|\nabla\cl(z_n)\|).
	\]
	Together with \eqref{X5}, this implies 
	$
	\|z_n-\psi_n\|=O(\|\nabla\cl(z_n)\|)
	$.
	
	Now, using Taylor's expansion, we obtain
	\[
	\aligned
	\cl(z_n)&=\cl(\psi_n)+\inp{\nabla\cl(\psi_n)}{z_n-\psi_n} + O(\|z_n-\psi_n\|^2) \\[0.3em]
	&=\cl(\psi_n)+\inp{\nabla\cl(\psi_n)}{z_n-\psi_n} + O(\|\nabla\cl(z_n)\|^2).
	\endaligned
	\]
	Since $\psi_n=t_nz_n^X+\bt(t_nz_n^X)$ and $t_nz_n^X\in\msn$, we have (by \eqref{reduction equ})
	\[
	\inp{\nabla\cl(\psi_n)}{z_n-\psi_n}=\inp{\nabla\cl(\psi_n)}{(z_n-\psi_n)^X}=(1-t_n)\inp{\nabla \cj(t_nz_n^X)}{z_n^X}=0.
	\]
	Therefore, we have
	\[
	\ga=\inf_{\va\in\msn}\cj(\va)\leq \cj(t_nz_n^X)=\cl(\psi_n)=\cl(z_n)+ O(\|\nabla\cl(z_n)\|^2)
	\]
	as desired.
	
	Finally, let us consider a minimizing sequence $\{\va_j\}\subset\msn$ for the reduced functional $\cj$. By Proposition \ref{reduction}, we see that $\{z_j=\va_j+\bt(\va_j)\}\subset\ch$ is a $(PS)_\ga$-sequence for $\cl$. And hence the conclusion follows from the assumption that $\cl$ satisfies the $(PS)_\ga$-condition, which means that $\{z_j\}$ is compact. 
\end{proof}

\section{Geometric preliminaries}\label{sec: geometric preliminaries}

In this section, we still assume $(M,\ig,\sa)$ is a general closed Riemannian spin manifold. We will recall how to develop the Dirac operator in normal coordinates using a well-suited trivialization of the spinor bundle. Our presentation follows \cite{AGHM} where a similar development was calculated, but we present here a more accurate expression than that of \cite{AGHM} to lay the groundwork for the subsequent proof of Theorem \ref{main thm}.

\subsection{Geometric notations}

The curvature operator is given by 
\[
R(X,Y)=[\nabla_X,\nabla_Y]-\nabla_{[X,Y]}.
\]
Then the Riemannian curvature tensor is the tensor with components $R_{ijkl}$, evaluated in local coordinates $\{x^i\}$ by
\[
R_{ijkl}=\ig(R(\pa_k,\pa_l)\pa_j,\pa_i).
\]

From now on, we use Einstein's summation convention.
The Ricci tensor is given by the contraction $R_{ij}=\ig^{kl}R_{kilj}$ of the curvature tensor, and the scalar curvature is the trace $S=\ig^{ij}R_{ij}$ of the Ricci tensor.

The Weyl tensor is given by
\[
W_{ijkl}=R_{ijkl}-\frac1{m-2}(R_{ik}\ig_{jl}-R_{il}\ig_{jk}+R_{jl}\ig_{ik}-R_{jk}\ig_{il})+\frac{S}{(m-1)(m-2)}(\ig_{ik}\ig_{jl}-\ig_{il}\ig_{jk}).
\]

\subsection{The Bourguignon-Gauduchon trivialization}\label{B-G-T}

To begin with we fix $\Psi_0\in\mbs_m$ with $|\Psi_0| = 1$ arbitrarily and define 
\begin{\equ}\label{t0}
\psi(x)=\frac{m^{\frac{m-1}2}}{(1+|x|^2)^{\frac m2}}(1-x)\cdot_{\ig_{\R^m}}\Psi_0
\end{\equ}
for $x\in\R^m$ and $\cdot_{\ig_{\R^m}}$ denotes the Clifford multiplication with respect to the Euclidean metric. Let $D_{\R^m}$ be the Dirac operator on $\R^m$, then it is standard to verify that
\begin{\equ}\label{t1}
	D_{\R^m}\psi=|\psi|^{2^*-2}\psi
\end{\equ}
and
\begin{\equ}\label{t2}
	|\psi| = \left(\frac{m}{1+|x|^2}\right)^{\frac{m-1}{2}}.
\end{\equ}
We choose $\de<i(M)/2$ where $i(M)>0$ is the injectivity radius of $M$. Let $\eta:\R^m\to\R$ be a smooth radially symmetric cut-off function satisfying $\eta(x)=1$ if $|x|\le\de$ and $\eta(x)=0$ if $|x|\ge2\de$. Now we define  $\va_\vr\in C^\infty(\R^m,\mbs_m)$ by
\begin{\equ}\label{test spinor}
	\va_\vr(x)=\eta(x)\psi_\vr(x) \quad \text{where}\quad \psi_\vr(x)=\vr^{-\frac{m-1}2}\psi(x/\vr).
\end{\equ}

In order to transplant the test spinor on $M$, we employ the well adapted Bourguignon-Gauduchon-trivialization \cite{BG}. Here we fix $p_0\in M$ arbitrarily, and let $(x_1,\dots,x_m)$ be the normal coordinates given by the exponential map
\[
\exp_{p_0}: \R^m\cong T_{p_0}M\supset U \to V\subset M,\quad x \mapsto p = \exp_{p_0}(x).
\]
For $p\in V$ let $G(p)=(\ig_{ij}(p))_{ij}$ denote the corresponding metric at $p$. Since $G(p)$ is symmetric and positive definite, the square root
$B(p)=(b_{ij}(p))_{ij}$ of $G(p)^{-1}$ is well defined, symmetric and positive definite. It can be thought of as a linear isometry
\[
B(p): (\R^m\cong T_{\exp_{p_0}^{-1}(p)}U,\ig_{\R^m}) \to (T_pV,\ig).
\]
We obtain an isomorphism of $SO(m)$-principal bundles:
\begin{displaymath}
	\xymatrix{
		P_{SO}(U,\ig_{\R^m}) \ar[r]^{\displaystyle\phi}  \ar[d] & P_{SO}(V,\ig) \ar[d] \\
		T_{p_0}M \supset U\ar[r]^{\ \ \displaystyle\exp_{p_0}} & V \subset M}
\end{displaymath}
where $\phi(y_1,\dots,y_m) = (By_1,\dots,By_m)$ for an oriented frame $(y_1,\dots,y_m)$ on $U$. Notice that $\phi$ commutes with the right action of $SO(m)$, hence it induces an isomorphism of spin structures:
\begin{displaymath}
	\xymatrix{
		Spin(m)\times U  =P_{Spin}(U,\ig_{\R^m}) \ar[r] \ar[d] & P_{Spin}(V,\ig) \subset P_{Spin}(M) \ar[d]\\
		T_{p_0}M\supset U\ar[r]^{\ \ \displaystyle\exp_{p_0}} & V \subset M}
\end{displaymath}
Thus we obtain an isomorphism between the spinor bundles $\mbs(U)$ and $\mbs(V)$:
\begin{equation}\label{spin-iso}
	\mbs(U) := P_{Spin}(U,\ig_{\R^m})\times_\rho \mbs_m \longrightarrow \mbs(V) := P_{Spin}(V,\ig)\times_\rho \mbs_m \subset \mbs(M)
\end{equation}
where $(\rho,\mbs_m)$ is the complex spin representation.

Setting $e_i=B(\pa_i)=\sum_{j}b_{ij}\pa_j$ we obtain an orthonormal frame $(e_1,\dots, e_m)$ of $(TV,\ig)$. In order to simplify the notation, we use $\nabla$ and $\bar\nabla$, respectively, for the Levi-Civita connections on $(TU,\ig_{\R^m})$ and $(TV,\ig)$ and for the natural lifts of these connections to the spinor bundles $\mbs(U)$ and $\mbs(V)$, respectively. For the Clifford multiplications on these bundles, we shall simply write ``$\cdot$" in both cases, that is,
\[
e_i\cdot\bar\psi=B(\pa_i)\cdot\bar\psi=\ov{\pa_i\cdot\psi}.
\]

Now a spinor $\va\in\Ga(\mbs(U))$ corresponds via the isomorphims \eqref{spin-iso} to a spinor $\bar\va\in\Ga(\mbs(V))$. In particular, since the spinors $\va_\vr\in\Ga(\mbs(U))$ from \eqref{test spinor} have compact support in $U$ they correspond to spinors $\bar\va_\vr\in\Ga(\mbs(M))$ with compact support in $V$. These will play the role of the test spinors in our functional settings. In the rest of this subsection we give some calculations that will be needed in order to establish the energy estimate later.

We write $D$ and $\bar D$ for the Dirac operators acting on $\Ga(\mbs(U))$ and $\Ga(\mbs(V))$, respectively. There holds (see \cite{AGHM} for details)
\begin{\equ}\label{cut-off spinor identity}
	\bar D \bar\va_\vr = \ov{D\va_\vr}+\Theta\cdot\bar\va_\vr + \La\cdot\bar\va_\vr + \sum_{i,j}(b_{ij}-\de_{ij})\ov{\pa_i\cdot\nabla_{\pa_j}\va_\vr}
\end{\equ}
with $\Theta\in\Ga(Cl(TV))$ and $\La\in\Ga(TV)$ given by
\[
\Theta = \frac14\sum_{\substack{i,j,k \\ i\neq j\neq k\neq i}}\sum_{\al,\bt} b_{i\al}(\pa_{\al}b_{j\bt})(b^{-1})_{\bt k}e_i\cdot e_j\cdot e_k,
\]
and
\[
\La = \frac14\sum_{i,k}\big( \bar\Ga_{ik}^i-\bar\Ga_{ii}^k\big)e_k = \frac12\sum_{i,k} \bar\Ga_{ik}^i e_k;
\]
here $(b^{-1})_{ij}$ denotes entries of the inverse matrix of $B$, and $\bar\Ga_{ij}^k:=\ig(\bar\nabla_{e_j}e_j,e_k)$.

\subsection{Expansion of the metric}

For any point $p_0\in M$, $r$ denotes the distance from $p$ to $p_0$. Recall that in the neighborhood of $p_0$, we have
\begin{\equ}\label{development of the metric}
	\aligned
	\ig_{ij}&=\de_{ij}+\frac13R_{i\al\bt j}x^\al x^\bt+\frac16R_{i\al\bt j,k}x^\al x^\bt x^k \\
	&\qquad +\Big( \frac1{20} R_{i\al\bt j,kl}+\frac2{45}\sum_d R_{i\al\bt d}R_{jkld}\Big)x^\al x^\bt x^k x^l + O(r^5)
	\endaligned
\end{\equ}
where we used the Einstein's summation convention.
Therefore, we can write
\[
G=I+G_2+G_3+G_4+O(r^5)
\]
with
\[
(G_2)_{ij}=\frac13R_{i\al\bt j}x^\al x^\bt
\]
\[
(G_3)_{ij}=\frac16R_{i\al\bt j,k}x^\al x^\bt x^k
\]
and 
\[
(G_4)_{ij}=\Big( \frac1{20} R_{i\al\bt j,kl}+\frac2{45}\sum_d R_{i\al\bt d}R_{jkld}\Big)x^\al x^\bt x^k x^l
\]
Let $B$ be the matrix in the Bourguignon-Gauduchon trivialization, then we have
\[
B=I+B_1+B_2+B_3+B_4+O(r^5).
\]
The relation $B^2G=I$ yields $B_1=0$ and
\[
2B_2+G_2=2B_3+G_3=2B_2G_2+2B_4+B_2^2+G_4=0.
\]
Hence
\[
\aligned
b_{ij}&=\de_{ij}-\frac16R_{i\al\bt j}x^\al x^\bt-\frac1{12}R_{i\al\bt j,k}x^\al x^\bt x^k \\
&\qquad - \Big( \frac1{40} R_{i\al\bt j,kl}-\frac7{360}\sum_d R_{i\al\bt d}R_{jkld}\Big)x^\al x^\bt x^k x^l +O(r^5)
\endaligned
\]
and similarly for the inverse of $B$ we have
\[
\aligned
(b^{-1})_{ij}&=\de_{ij}+\frac16R_{i\al\bt j}x^\al x^\bt+\frac1{12}R_{i\al\bt j,k}x^\al x^\bt x^k \\
&\qquad + \Big( \frac1{40} R_{i\al\bt j,kl}+\frac1{120}\sum_d R_{i\al\bt d}R_{jkld}\Big)x^\al x^\bt x^k x^l +O(r^5)
\endaligned
\]

Likewise, we have
\[
\Theta=-\frac1{144}\sum_{\substack{i,j,k \\ i\neq j\neq k\neq i}}\sum_l R_{l\bt\ga k}(R_{ji\al l}+R_{jl\al i})x^\al x^\bt x^\ga e_i\cdot e_j \cdot e_k +O(r^4)
\]
and 
\[
\La=-\sum_k\Big( \frac14 R_{\al k}x^\al +\frac16 R_{\al k,\bt}x^\al x^\bt + O(r^3)\Big)e_k
\]
in the expansion \eqref{cut-off spinor identity}.

\section{Proof of the main results}\label{sec: proof the main results}

From now on, let us consider $(M,\ig)$ is not locally conformally flat. The proof is divided into three major steps: 

\begin{itemize}
	\item[1.]  The construction of a good test spinor by using a Killing spinor on the $m$-sphere and some fundamental estimates, which are derived from elementary computations by using the very test spinor. 
	
	\item[2.] The proof of Theorem \ref{main thm} for the case $\ker(D_\ig)=\{0\}$.
	
	\item[3.] The proof of Theorem \ref{main thm} for the case $\ker(D_\ig)\neq\{0\}$.
\end{itemize}
Let us remind the reader that the proof of Theorem \ref{main thm} will be accomplished by applying the local energy estimate in Theorem \ref{abstract thm} to the functional $\Phi$ given in \eqref{the functional} (see also its simplified formulation \eqref{the functional in E}).

\subsection{Some estimates}

For $\dim M=m\geq4$, the local conformal flatness of $(M,\ig)$ is characterized by the nullity of the Weyl tensor $W_\ig$. Since we assumed that $(M,\ig)$ is not conformally flat, there exists $p_0\in M$ such that $|W_\ig(p_0)|>0$. In the sequel, we fix this $p_0$. Up to a conformal change of the metric, let us consider the following conformal normal coordinates, introduced by Lee and Parker \cite[Section 5]{LeeParker}. By conformal invariance of the Weyl tensor, the relation $|W_\ig(p_0)|>0$ still holds.

\begin{Prop}\label{conformal normal coordinates}
	{Given $\tau\in\R$, there is a conformal metric $\ig$ on $M$ such that}
	\begin{\equ}\label{conformal-normal-det}
{	\det \ig_{ij}=1+\tau\sum_{i,d}R_{i\al\bt d}R_{ikld}x^\al x^\bt x^k x^l+O(r^5), }
	\end{\equ}
	where $r=|x|$ in $\ig$-normal coordinates at $p_0$. Moreover, there hold
	\begin{itemize}
		\item[$(1)$] $R_{ij}=0$;
		
		\item[$(2)$] $R_{ij,k}+R_{jk,i}+R_{ki,j}=0$;
		
		\item[$(3)$] $\displaystyle \Big(R_{\al\bt ,kl}+\Big(\frac{2}9+20\tau\Big)\sum_{i,d} R_{i\al\bt d}R_{ikld}\Big)x^\al x^\bt x^k x^l=0$.
	\end{itemize}
\end{Prop}
\begin{proof}
{Since the idea is built upon the arguments in \cite[Section 5]{LeeParker}, we will simply sketch the proofs as follows. For a given metric $\ig$ on $M$, let $\{x^i\}$ denote the $\ig$-normal coordinates in a neighborhood $U$ of $p_0$. Then, using \eqref{development of the metric}, one should be able to obtain the Taylor expansion of $\det \ig_{ij}$ evaluated at $p_0$ as
\[
\aligned
\det\ig_{ij}&=1-\frac13 R_{\al\bt}x^\al x^\bt -\frac16 R_{\al\bt ,k}x^\al x^\bt x^k  \\
&\qquad - \Big( \frac1{20}R_{\al\bt ,kl}+\frac{1}{90}\sum_{i,d} R_{i\al\bt d}R_{ikld}-\frac1{18}R_{\al\bt}R_{kl} \Big)x^\al x^\bt x^k x^l+O(r^5),
\endaligned
\]
which can be found in \cite[Lemma 5.5]{LeeParker}.}

{Let $\msp_n$ denote the space of homogeneous polynomials in $x$ of degree $n$. For a function $f=f_2+f_3+f_4$ with $f_2\in\msp_2$, $f_3\in\msp_3$ and $f_4\in\msp_4$, let us consider the conformal metric $\tilde\ig=e^{2f}\ig$ and the $\tilde\ig$-normal coordinates $\{\tilde x^i=e^{-f}x^i\}$. In the sequel, quantities marked with a tilde will be associated with $\tilde\ig$, while those unmarked with such will be associated with $\ig$.}

{It follows directly that $\det \tilde\ig_{ij}=e^{2mf}\det \ig_{ij}$, and then by Taylor's theorem
\[
\aligned
\det\tilde\ig_{ij}&=1+\Big(-\frac13 R_{\al\bt}x^\al x^\bt + 2mf_2\Big)+\Big(-\frac16 R_{\al\bt ,k}x^\al x^\bt x^k + 2m f_3\Big) \\[0.5em]
&\qquad + \Big[ - \Big( \frac1{20}R_{\al\bt ,kl}+\frac{1}{90}\sum_{i,d} R_{i\al\bt d}R_{ikld}-\frac1{18}R_{\al\bt}R_{kl} \Big)x^\al x^\bt x^k x^l  \\
&\qquad \quad +2mf_4-\frac23(m-1)f_2R_{\al\bt}x^\al x^\bt  + 2m^2 f_2^2 \Big] + O(r^5).
\endaligned
\]
Here we have used the fact that $\tilde x^i=(1-f_2)x^i + O(r^4)$. Now, it is obvious that $f_2$ and $f_3$ are uniquely determined if $\det\tilde\ig_{ij}$ vanishes to order $4$ in $\tilde\ig$-normal coordinates.}

{To determine $f_4$, let us denote $\tilde\ig^*=e^{2(f_2+f_3)}\ig$ and $\{\tilde R^*_{ijkl}\}$ the components of the $\tilde\ig^*$-curvature tensor. Then we have $\tilde\ig=e^{2f_4}\tilde\ig^*$. Notice that $\det\tilde\ig^*_{ij}$ also vanishes to order $4$, we have 
\[
\tilde R^*_{ij}=0 \quad \text{and} \quad \tilde R^*_{ij,k}+\tilde R^*_{jk,i}+\tilde R^*_{ki,j}=0.
\]
Moreover, since $f_4\in\msp_4$, we have that the conformal factor $e^{2f_4}$ keeps the $\tilde\ig^*$-curvature tensor and its first derivatives invariant at $p_0$, i.e., we have $\tilde R_{ijkl}=\tilde R^*_{ijkl}$ and $\tilde R_{ij,k}=\tilde R^*_{ij,k}$ (cf. \cite[Theorem 5.2]{LeeParker}). Therefore, by choosing $f_4$ so that
\[
- \Big( \frac1{20}\tilde R^*_{\al\bt ,kl}+\frac{1}{90}\sum_{i,d} \tilde R^*_{i\al\bt d}\tilde R^*_{ikld} \Big)x^\al x^\bt x^k x^l+2mf_4
=\tau\sum_{i,d}\tilde R^*_{i\al\bt d}\tilde R^*_{ikld}x^\al x^\bt x^k x^l,
\]
we obtain \eqref{conformal-normal-det} by replacing $\ig$ with $\tilde\ig$.}
\end{proof}

\begin{Rem}\label{rmk-conformal-normal-coordinates}
{It is worth noting that we have retained the 4th-order term and the parameter $\tau$ in expansion \eqref{conformal-normal-det}. The reason for not enforcing $\det{g_{ij}}=1+O(r^N)$ with $N\geq5$ as stated in \cite[Theorem 5.1]{LeeParker} (or simply $\det{g_{ij}}\equiv1$ as in \cite{Cao, Gunther}) is to demonstrate that the presence of $\sum_{i,d}R_{i\al\bt d}R_{ikld}x^\al x^\bt x^k x^l$ is not crucial in deriving the energy bounds for functional \eqref{the functional}. We believe this information will be valuable in our upcoming works concerning Dirac operators and spinor equations. For reader's convenience, we would like to emphasize that, as demonstrated in the proof of Lemma \ref{energy-esti} below, the specific value of the parameter $\tau$ in \eqref{conformal-normal-det} has no impact on the conclusion of Lemma \ref{energy-esti}. Hence, one can simply choose here $\tau=0$ if desired.
}
\end{Rem}

%

In what follows, whenever we refer to conformal normal coordinates, it will cause no confusion if we take the metric $\ig$ just as in Proposition \ref{conformal normal coordinates}. Next we collect some useful lemmas that will help us to obtain an accurate energy estimate when applying Theorem \ref{abstract thm}.

\begin{Lem}\label{lem: zero identity1}
For $\psi$ be given in \eqref{t0}, we have
\[
\sum_{i,j} R_{i\al\bt j}x^\al x^\bt\pa_i\cdot\nabla_{\pa_j}\psi(x)\equiv0
\]
for all $x\in\R^m$.
\end{Lem}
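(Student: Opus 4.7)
The strategy is to differentiate $\psi$ directly, substitute into the identity, and then use only the algebraic symmetries of the Riemann tensor together with the fact that, in the conformal normal coordinates at $p_0$ fixed at the start of Section~\ref{sec: proof the main results}, one has $R_{\al\bt}(p_0)=0$ by Proposition~\ref{conformal normal coordinates}$(1)$.

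A direct computation in the flat trivialization of $\mbs(\R^m)$ yields
\[
\nabla_{\pa_j}\psi(x)=-\frac{m\,x^j}{1+|x|^2}\,\psi(x)-f(x)\,\pa_j\cdot\Psi_0,\qquad f(x)=\frac{m^{(m-1)/2}}{(1+|x|^2)^{m/2}}.
\]
Plugging this into the sum produces two contributions. The $x^j\psi$-contribution contains the factor $\sum_{\bt,j}R_{i\al\bt j}\,x^\bt x^j$, which vanishes because $R_{i\al\bt j}=-R_{i\al j\bt}$ is antisymmetric in $(\bt,j)$ while $x^\bt x^j$ is symmetric.

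The $\pa_j\cdot\Psi_0$-contribution equals
\[
-f(x)\sum_{i,j,\al,\bt}R_{i\al\bt j}\,x^\al x^\bt\,\pa_i\cdot\pa_j\cdot\Psi_0
= -f(x)\sum_{i,j,\al,\bt}R_{i\al\bt j}\,x^\al x^\bt\,\Bigl(-\de_{ij}+\tfrac12[\pa_i\cdot,\pa_j\cdot]\Bigr)\Psi_0.
\]
The key algebraic claim is that $T_{ij}:=\sum_{\al,\bt}R_{i\al\bt j}\,x^\al x^\bt$ is symmetric in $(i,j)$. To prove this I would start from $T_{ji}$, apply pair-exchange symmetry $R_{j\al\bt i}=R_{\bt i j\al}$ and relabel $\al\leftrightarrow\bt$ in the sum to get $T_{ji}=\sum R_{\al i j\bt}\,x^\al x^\bt$; then the first Bianchi identity $R_{\al ij\bt}+R_{\al j\bt i}+R_{\al\bt ij}=0$ combined with the vanishing of $R_{\al\bt ij}\,x^\al x^\bt$ (antisymmetry in $(\al,\bt)$), and a further pair exchange together with $R_{\al i\bt j}=-R_{i\al\bt j}$, yields $T_{ji}=T_{ij}$. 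Consequently the $(i,j)$-antisymmetric commutator $[\pa_i\cdot,\pa_j\cdot]$ is annihilated, and only the trace part survives, giving $-\sum_i R_{i\al\bt i}\,x^\al x^\bt\,\Psi_0 = R_{\al\bt}\,x^\al x^\bt\,\Psi_0$, where the identification with the Ricci tensor follows from $\sum_i R_{i\al\bt i}=-\sum_i R_{i\al i\bt}=-R_{\al\bt}$. Since $R_{\al\bt}(p_0)=0$ in the chosen coordinates, the entire expression vanishes identically in $x$.

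The main technical point is the $(i,j)$-symmetry of $T_{ij}$: it is elementary but demands a careful chain of pair-symmetry, first Bianchi, and antisymmetry arguments, with one relabelling of the dummy indices $(\al,\bt)$; all remaining steps are bookkeeping and the use of a single Ricci-vanishing input from the conformal normal coordinate construction.
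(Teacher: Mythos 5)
Your argument is correct and follows essentially the same route as the paper: compute $\nabla_{\pa_j}\psi$ explicitly, kill the $x^j\psi$ piece by the antisymmetry of $R_{i\al\bt j}$ in $(\bt,j)$, observe that $T_{ij}=R_{i\al\bt j}x^\al x^\bt$ is symmetric so that only the trace of $\pa_i\cdot\pa_j$ survives, and invoke $R_{\al\bt}(p_0)=0$ from the conformal normal coordinates. The only cosmetic difference is your derivation of the $(i,j)$-symmetry of $T_{ij}$ via the first Bianchi identity, which is correct but unnecessary: the paper gets it directly from the dummy relabelling $\al\leftrightarrow\bt$ and the pair-exchange symmetry $R_{i\bt\al j}=R_{j\al\bt i}$ alone.
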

\begin{proof}
	Using the explicit formula of $\psi$, we get
	\[
	\pa_i\cdot\nabla_{\pa_j}\psi(x)=-\frac{m^{\frac{m-1}2}}{(1+|x|^2)^{\frac m2+1}}x_j\pa_i\cdot(1-x)\cdot\Psi_0-\frac{m^{\frac{m-1}{2}}}{(1+|x|^2)^{\frac m2}}\pa_i\cdot\pa_j\cdot\Psi_0.
	\]
	Hence, we compute
	\[
	\aligned
	R_{i\al\bt j}x^\al x^\bt\pa_i\cdot\nabla_{\pa_j}\psi(x)&=-\frac{m^{\frac{m-1}2}}{(1+|x|^2)^{\frac m2+1}}R_{i\al\bt j}x^\al x^\bt x^j\pa_i\cdot(1-x)\cdot\Psi_0 \\
	&\qquad - \frac{m^{\frac{m-1}{2}}}{(1+|x|^2)^{\frac m2}}R_{i\al\bt j}x^\al x^\bt\pa_i\cdot\pa_j\cdot\Psi_0.
	\endaligned
	\]
	
	Since $R_{i\al\bt j}=-R_{i\al j\bt}$, we get that
	\[
	R_{i\al\bt j}x^\al x^\bt x^j\pa_i\cdot(1-x)\cdot\Psi_0=0.
	\]
	That is, the firs summand vanishes.
	
	For the second summand, since $\pa_i\cdot\pa_j+\pa_j\cdot\pa_i=-2\de_{ij}$ and since
	\[
	R_{i\al\bt j}x^\al x^\bt=R_{i\bt\al j}x^\al x^\bt=R_{j\al\bt i}x^\al x^\bt
	\]
	(we have used the Einstein's summation convention and the fact $R_{j\al\bt i}=R_{\bt ij\al}=R_{i\bt \al j}$), we can obtain from Proposition \ref{conformal normal coordinates}(1) that
	\[
	\sum_{i,j}R_{i\al\bt j}x^\al x^\bt\pa_i\cdot\pa_j\cdot\Psi_0=-\sum_i R_{i\al\bt i}x^\al x^\bt\Psi_0  \\
	= R_{\al\bt}x^\al x^\bt\Psi_0 =0.
	\]
	Hence the second summand vanishes, which proves the lemma.
	
\end{proof}

\begin{Lem}\label{lem: zero identity2}
Let $A_{ijkl}\in\R$ be constants, for $1\leq i,j,k,l\leq m$, then $\Psi_0$ in \eqref{t0} can be chosen such that 
\begin{\equ}\label{zero identity}
\sum_{\substack{i,j,k,l \\ i\neq j\neq k\neq i}} A_{ijkl}\real(\pa_i\cdot\pa_j\cdot\pa_k\cdot\pa_l\cdot\Psi_0,\Psi_0)=0.
\end{\equ}
\end{Lem}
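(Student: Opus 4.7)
My plan is to split the sum in \eqref{zero identity} according to whether the index $l$ coincides with one of $i,j,k$ or not, and then handle the two pieces by entirely different mechanisms. For the cases $l\in\{i,j,k\}$ I would use the Clifford relations $\pa_a\cdot\pa_a=-1$ and $\pa_a\cdot\pa_b=-\pa_b\cdot\pa_a$ (for $a\neq b$) to observe that the fourfold product $\pa_i\cdot\pa_j\cdot\pa_k\cdot\pa_l$ collapses, up to a sign, to a twofold product $\pa_a\cdot\pa_b$ with $a\neq b$ (explicitly, $\pa_i\cdot\pa_j\cdot\pa_k\cdot\pa_i=-\pa_j\cdot\pa_k$, $\pa_i\cdot\pa_j\cdot\pa_k\cdot\pa_j=\pa_i\cdot\pa_k$, $\pa_i\cdot\pa_j\cdot\pa_k\cdot\pa_k=-\pa_i\cdot\pa_j$). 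A single orthonormal Clifford generator is skew-Hermitian on $\mbs_m$ and two such generators anti-commute, so $\pa_a\cdot\pa_b$ ($a\neq b$) is itself skew-Hermitian, giving $\real(\pa_a\cdot\pa_b\cdot\Psi_0,\Psi_0)=0$ for every $\Psi_0\in\mbs_m$. Thus the $l\in\{i,j,k\}$ contribution drops out of \eqref{zero identity} automatically, regardless of the choice of $\Psi_0$.

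It remains to arrange the vanishing of the reduced quadratic form
\[
Q(\Psi_0)\;:=\;\sum_{\substack{i,j,k,l \\ \text{pairwise distinct}}} A_{ijkl}\,\real(\pa_i\cdot\pa_j\cdot\pa_k\cdot\pa_l\cdot\Psi_0,\Psi_0).
\]
Here the endomorphism $\pa_i\cdot\pa_j\cdot\pa_k\cdot\pa_l$ is self-adjoint on $\mbs_m$ when $i,j,k,l$ are pairwise distinct, as one checks by combining the four sign flips from $(\pa_a)^*=-\pa_a$ with the $\binom{4}{2}=6$ anti-commuting transpositions needed to reverse the order of the product. Therefore the operator $T:=\sum_{\text{distinct}} A_{ijkl}\,\pa_i\cdot\pa_j\cdot\pa_k\cdot\pa_l$ is self-adjoint on $\mbs_m$, and $Q(\Psi_0)=(T\Psi_0,\Psi_0)$.

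The crux is then a trace argument. For any four pairwise distinct indices and $m\geq 6$, the Clifford monomial $\pa_i\cdot\pa_j\cdot\pa_k\cdot\pa_l$ is a degree-$4$ element of $Cl(m)\otimes\C$ which is neither a scalar nor (in odd dimensions) the volume form, and it is a standard structural property of the complex spin representation that every such monomial acts as a traceless endomorphism of $\mbs_m$; hence $\tr_{\mbs_m}T=0$. Being self-adjoint and traceless, $T$ is either identically zero, in which case any unit $\Psi_0$ works, or it admits unit eigenvectors $\Psi_+$ and $\Psi_-$ corresponding to a positive and a negative eigenvalue, which are automatically $\C$-orthogonal. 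The path $\Psi(t):=\cos t\,\Psi_++\sin t\,\Psi_-$ then remains on the unit sphere of $\mbs_m$, and $t\mapsto(T\Psi(t),\Psi(t))$ is a continuous function that changes sign on $[0,\pi/2]$; the intermediate value theorem yields $t_\ast$ with $Q(\Psi(t_\ast))=0$, and we set $\Psi_0:=\Psi(t_\ast)$. The one point I would pause to justify carefully is the tracelessness of degree-$4$ Clifford monomials in $\mbs_m$, which can be either cited from a standard reference on spin geometry (e.g.\ \cite{LM}) or verified directly from the decomposition of $Cl(m)\otimes\C$ into matrix algebras under the spin representation.
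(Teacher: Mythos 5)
Your proof is correct and takes a genuinely different route from the paper's. Both arguments begin by discarding the $l\in\{i,j,k\}$ terms through the observation that $\pa_a\cdot\pa_b$ ($a\neq b$) is skew-Hermitian on $\mbs_m$ (the paper merely asserts the purely imaginary character of these terms; your explicit Clifford reduction to $\pm\pa_a\cdot\pa_b$ is a useful addition). After that, the paper proceeds by induction on the dimension: the case $m=4$ is treated via the projectors $w^\pm=\tfrac12\bigl(1\pm\pa_1\cdot\pa_2\cdot\pa_3\cdot\pa_4\bigr)$, and the inductive step splits the sum into the part $I_1$ with indices in $\{1,\dots,d-1\}$ and the remainder $I_2+\cdots+I_5$; choosing $\Psi_1$ with $I_1(\Psi_1)=0$, the spinor $\Psi_2:=\pa_d\cdot\Psi_1$ flips the sign of the remainder while leaving $I_1$ fixed, and the intermediate value theorem on a path in the unit sphere of $\mbs_d$ produces a zero. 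Your argument replaces the induction by a single global fact: $T=\sum_{\text{distinct}}A_{ijkl}\,\pa_i\cdot\pa_j\cdot\pa_k\cdot\pa_l$ is Hermitian (correctly checked: four adjoint sign flips times $\binom42=6$ anticommuting transpositions gives $+1$), and each degree-4 Clifford monomial with pairwise distinct indices is traceless on $\mbs_m$, so $\tr T=0$; a nonzero Hermitian traceless operator has eigenvalues of both signs, and your great-circle path finishes with the intermediate value theorem as before. The tracelessness step, which you rightly flag as the point needing care, admits a short self-contained proof: conjugating $\pa_{i_1}\cdots\pa_{i_4}$ by any one of its own factors $\pa_{i_a}$ replaces the monomial by $(-1)^{3}=-1$ times itself, and since conjugation preserves the trace (on each irreducible block, also when $m$ is odd, because the volume element is central there), the trace must vanish. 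This argument is cleaner than the paper's, avoids the implicit bookkeeping of how $\mbs_d$ restricts to the subalgebra generated by $\pa_1,\dots,\pa_{d-1}$ in the inductive step, and in fact works for all $m\geq4$: your restriction $m\geq 6$ (to exclude the volume form) is unnecessary, since for $m\geq 5$ the degree $4$ is strictly less than $m$, and for $m=4$ the element $\pa_1\cdot\pa_2\cdot\pa_3\cdot\pa_4$ has equal-dimensional $\pm1$ eigenspaces on $\mbs_4$ and is still traceless. That said, $m\geq 6$ is all the paper needs.
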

\begin{proof}
	First of all, let $i\neq j\neq k\neq i$. It is easy to see that, for arbitrary $\Psi_0$ with unit length, $	(\pa_i\cdot\pa_j\cdot\pa_k\cdot\pa_l\cdot\Psi_0,\Psi_0)$ takes only real values for mutually distinct  $i,j,k,l$ and it takes only pure imaginary values for $l\in\{i,j,k\}$.
	
	For dimension $m=3$, since $1\leq l\leq 3$, we have
	\[
	\sum_{\substack{i,j,k,l \\ i\neq j\neq k\neq i}} A_{ijkl}\real(\pa_i\cdot\pa_j\cdot\pa_k\cdot\pa_l\cdot\Psi_0,\Psi_0)\equiv0
	\]
	for arbitrary choice of $\Psi_0$.
	
	For dimension $m=4$, the summation is taken over all $4!$ permutations of the form
	\[
	\real(\pa_i\cdot\pa_j\cdot\pa_k\cdot\pa_l\cdot\Psi_0,\Psi_0) \quad \text{with } i,j,k,l \text{ mutually distinct.}
	\]
	Hence, we only need to evaluate $\real(\pa_1\cdot\pa_2\cdot\pa_3\cdot\pa_4\cdot\Psi_0,\Psi_0)$. For this purpose, let us set $w\in Cl(R^m)$ by
	\[
	w^\pm=\frac{1\pm \pa_1\cdot\pa_2\cdot\pa_3\cdot\pa_4}2.
	\]
	Then, by using $\pa_1\cdot\pa_2\cdot\pa_3\cdot\pa_4\cdot w^\pm=w^\pm\cdot\pa_1\cdot\pa_2\cdot\pa_3\cdot\pa_4=\pm w^\pm$ and $w^+\cdot w^-=0$, we see that 
	\[
	\real(\pa_1\cdot\pa_2\cdot\pa_3\cdot\pa_4\cdot\Upsilon,\Upsilon)=|w^+\cdot\Upsilon|^2-|w^-\cdot\Upsilon|^2
	\]
	for a spinor $\Upsilon\in \mbs_m$. Hence we can choose suitably $\Upsilon=\Psi_0$ with $|\Psi_0|=1$ such that
	\[
	\real(\pa_1\cdot\pa_2\cdot\pa_3\cdot\pa_4\cdot\Psi_0,\Psi_0)=0,
	\]
	and this leads to the identity \eqref{zero identity}.

	For dimension $m=d\geq 5$, let us assume the conclusion holds true for dimension $d-1$. Notice that, for any spinor $\Upsilon\in\mbs_d$, we have
	\[
	\sum_{\substack{i,j,k,l \\ i\neq j\neq k\neq i}} A_{ijkl}\real(\pa_i\cdot\pa_j\cdot\pa_k\cdot\pa_l\cdot\Upsilon,\Upsilon)=I_1(\Upsilon)+I_2(\Upsilon)+I_3(\Upsilon)+I_4(\Upsilon)+I_5(\Upsilon)
	\]
	where
	\begin{eqnarray*}
	&& I_1(\Upsilon)=\sum_{\substack{i,j,k,l\in\{1,\dots,d-1\} \\ i\neq j\neq k\neq i} }  A_{ijkl}\real(\pa_i\cdot\pa_j\cdot\pa_k\cdot\pa_l\cdot\Upsilon,\Upsilon), \\
	&& I_2(\Upsilon)=\sum_{\substack{j,k,l \\d\neq j\neq k\neq d } }  A_{djkl}\real(\pa_d\cdot\pa_j\cdot\pa_k\cdot\pa_l\cdot\Upsilon,\Upsilon), \\
	&& I_3(\Upsilon)=\sum_{\substack{i,k,l \\i\neq d\neq k\neq i } }  A_{idkl}\real(\pa_i\cdot\pa_d\cdot\pa_k\cdot\pa_l\cdot\Upsilon,\Upsilon), \\
	&& I_4(\Upsilon)=\sum_{\substack{i,j,l \\i\neq j\neq d\neq i } }  A_{ijdl}\real(\pa_i\cdot\pa_j\cdot\pa_d\cdot\pa_l\cdot\Upsilon,\Upsilon), \\
	&& I_5(\Upsilon)=\sum_{\substack{i,j,k  \\i\neq j\neq k\neq i } }  A_{ijkd}\real(\pa_i\cdot\pa_j\cdot\pa_k\cdot\pa_d\cdot\Upsilon,\Upsilon) .
	\end{eqnarray*}
Since the summand $I_1(\Upsilon)$ can be treated similarly as in the dimension $d-1$, we first obtain a unit spinor $\Psi_1\in \mbs_{d}$ such that $I_1(\Psi_1)=0$. Then we have
	\[
	\aligned
	\sum_{\substack{i,j,k,l \\ i\neq j\neq k\neq i}} A_{ijkl}\real(\pa_i\cdot\pa_j\cdot\pa_k\cdot\pa_l\cdot\Psi_1,\Psi_1)=I_2(\Psi_1)+I_3(\Psi_1)+I_4(\Psi_1)+I_5(\Psi_1).
	\endaligned
	\]
If the remaining summands vanish identically, then we are done. If not, let us consider the unit spinor $\Psi_2=\pa_d\cdot\Psi_1$. Then we see that, for $i,j,k,l\in\{1,\dots,d-1\}$,
\[
\real(\pa_i\cdot\pa_j\cdot\pa_k\cdot\pa_l\cdot\Psi_2,\Psi_2)=\real(\pa_i\cdot\pa_j\cdot\pa_k\cdot\pa_l\cdot\Psi_1,\Psi_1) 
\]
and, if any one of $i,j,k,l$ equals to $d$,
\[
\real(\pa_i\cdot\pa_j\cdot\pa_k\cdot\pa_l\cdot\Psi_2,\Psi_2)=-\real(\pa_i\cdot\pa_j\cdot\pa_k\cdot\pa_l\cdot\Psi_1,\Psi_1).
\]
Therefore the function
\[
\Upsilon \mapsto \sum_{\substack{i,j,k,l \\ i\neq j\neq k\neq i}} A_{ijkl}\real(\pa_i\cdot\pa_j\cdot\pa_k\cdot\pa_l\cdot\Upsilon,\Upsilon)
\]
changes sign on the unit sphere of $\mbs_d$. And due to the continuity of this function, we get the existence of a unit spinor $\Upsilon=\Psi_0$ such that \eqref{zero identity} holds. The proof is hereby completed.
\end{proof}

In the sequel we use the notation $f_\vr\lesssim g_\vr$ for two functions $f_\vr$ and $g_\vr$, when there exists a constant $C>0$ independent of $\vr$ such that $f_\vr\leq C g_\vr$.

\begin{Lem}\label{derivative-esti}
	Let $\bar\va_\vr\in\mbs(V)$ be as in Subsection \ref{B-G-T} and set $\bar R_\vr:=\bar D\bar\va_\vr-|\bar\va_\vr|^{2^*-2}\bar\va_\vr$. Then
	\[
	\|\bar R_\vr\|_{E^*}
	\lesssim\begin{cases}
		\vr^{\frac{m-1}2} &\text{if } 2\leq m\leq 6, \\
		\vr^3|\ln\vr|^{\frac47} &\text{if } m=7, \\
		\vr^3 &\text{if } m\geq8,
	\end{cases}
	\]
	where $E^*$ stands for the dual space of $E=H^{\frac12}(M,\mbs(M))$.
\end{Lem}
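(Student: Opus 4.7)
The plan is to bound $\|\bar R_\vr\|_{E^*}$ via the dual Sobolev embedding $L^{2m/(m+1)}(M,\mbs(M))\hookrightarrow E^*$ (dual to the critical embedding $H^{1/2}\hookrightarrow L^{2^*}$), reducing the task to a control of $\|\bar R_\vr\|_{L^{2m/(m+1)}}$. Using the Bourguignon--Gauduchon identity \eqref{cut-off spinor identity} together with $\va_\vr=\eta\psi_\vr$ and the scale-invariant relation $D_{\R^m}\psi_\vr=|\psi_\vr|^{2^*-2}\psi_\vr$ inherited from \eqref{t1}, I would decompose
\[
\bar R_\vr=\overline{\nabla\eta\cdot\psi_\vr}+\overline{(\eta-\eta^{2^*-1})|\psi_\vr|^{2^*-2}\psi_\vr}+\Theta\cdot\bar\va_\vr+\La\cdot\bar\va_\vr+\sum_{i,j}(b_{ij}-\de_{ij})\overline{\pa_i\cdot\nabla_{\pa_j}\va_\vr}.
\]
The first two summands are supported in the annulus $\de\leq|x|\leq 2\de$, where \eqref{t2} gives $|\psi_\vr|\lesssim\vr^{(m-1)/2}$; hence their contribution to every $L^p$-norm (in particular to $\|\bar R_\vr\|_{L^{2m/(m+1)}}$) is $O(\vr^{(m-1)/2})$.

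Next, I would pass to the conformal normal coordinates at $p_0$ provided by Proposition \ref{conformal normal coordinates} and show that on the core region $|x|\leq\de$ each of the three remaining metric-error terms admits the common pointwise bound $r^3(|\psi_\vr|+|\nabla\psi_\vr|)$. This rests on three cancellations. First, the expansion of $\Theta$ in Subsection 3.3 is already cubic in $x$, so $|\Theta|\lesssim r^3$ directly. Second, for $\La$ the linear coefficient $R_{\al k}x^\al$ vanishes by Proposition \ref{conformal normal coordinates}\,(1), while the quadratic coefficient $\tfrac16 R_{\al k,\bt}x^\al x^\bt\,e_k$ vanishes after symmetrization in $(\al,\bt)$ via the cyclic identity of Proposition \ref{conformal normal coordinates}\,(2), yielding $|\La|\lesssim r^3$. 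Third, and most delicately, the leading quadratic coefficient $-\tfrac16 R_{i\al\bt j}x^\al x^\bt$ of $b_{ij}-\de_{ij}$ vanishes identically when contracted against $\pa_i\cdot\nabla_{\pa_j}\psi_\vr$: since $\psi_\vr(x)=\vr^{-(m-1)/2}\psi(x/\vr)$ is a rescaling of the model spinor \eqref{t0} and $R_{i\al\bt j}$ is the constant curvature tensor at $p_0$, this reduces after substituting $y=x/\vr$ to precisely Lemma \ref{lem: zero identity1}. The remainder is then dominated by $r^3|\nabla\psi_\vr|$, which absorbs the weaker $r^3|\psi_\vr|$ contribution.

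All that remains is a rescaling computation. Substituting $x=\vr y$ and using $|\nabla\psi(y)|\lesssim(1+|y|)^{-m}$ from \eqref{t0}, the $L^p$-integral of $r^3|\nabla\psi_\vr|$ over $|x|\leq\de$ with $p=2m/(m+1)$ reduces to
\[
\vr^{6m/(m+1)}\int_{|y|\leq \de/\vr}|y|^{3p}|\nabla\psi(y)|^p\,dy,
\]
whose radial density (after including the $|y|^{m-1}$ Jacobian factor) has asymptotic exponent $(-m^2+6m-1)/(m+1)$. This integral is finite for $m\geq 8$, produces a logarithmic divergence at the critical dimension $m=7$, and diverges polynomially as $(\de/\vr)^{m(7-m)/(m+1)}$ for $m\leq 6$. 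Taking the $p$-th root yields the bounds $\vr^3$, $\vr^3|\ln\vr|^{4/7}$, and $\vr^{(m-1)/2}$ respectively, which together with the annulus bound $\vr^{(m-1)/2}$ give the claim. The main delicate point is the simultaneous use of conformal normal coordinates (needed to suppress the low-order curvature terms in $\La$) and Lemma \ref{lem: zero identity1} (needed to kill the $O(r^2)$ contribution from $b_{ij}-\de_{ij}$); without either cancellation, the sharp $\vr^3$ bound in high dimensions would be unreachable, and the critical behavior at $m=7$ would not be captured correctly.
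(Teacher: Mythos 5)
Your overall strategy coincides with the paper's: pass to $E^*$ via the dual embedding $L^{2m/(m+1)}\hookrightarrow E^*$, split $\bar R_\vr$ using \eqref{cut-off spinor identity}, treat the annulus terms trivially, exploit conformal normal coordinates and Lemma \ref{lem: zero identity1} in the core, and rescale. The decomposition and the rescaling computation for the $(b_{ij}-\de_{ij})\ov{\pa_i\cdot\nabla_{\pa_j}\psi_\vr}$ term are correct, and the exponent bookkeeping (with the borderline at $m=7$) matches the paper.

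There is, however, a genuine error in your treatment of $\La$. You assert that the quadratic coefficient $\tfrac16 R_{\al k,\bt}x^\al x^\bt e_k$ of $\La$ ``vanishes after symmetrization in $(\al,\bt)$ via the cyclic identity of Proposition \ref{conformal normal coordinates}(2),'' concluding $|\La|\lesssim r^3$. This is false. Symmetrizing in $(\al,\bt)$ gives $\tfrac1{12}(R_{\al k,\bt}+R_{\bt k,\al})x^\al x^\bt$, and the cyclic identity $R_{\al k,\bt}+R_{k\bt,\al}+R_{\bt\al,k}=0$ together with the symmetry of the Ricci tensor yields
\[
R_{\al k,\bt}+R_{\bt k,\al}=-R_{\al\bt,k},
\]
so the symmetrized quadratic term is $-\tfrac1{12}R_{\al\bt,k}x^\al x^\bt e_k$, which is generically nonzero at $p_0$ (conformal normal coordinates give $R_{ij}(p_0)=0$ and kill only the \emph{cyclic} symmetrization of $R_{ij,k}$, not $R_{ij,k}$ itself). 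The correct bound is $|\La|\lesssim r^2$, which is exactly what the paper uses in estimating $A_4=\eta\La\cdot\bar\psi_\vr$. Your final answer survives this slip only by coincidence: the $r^2|\psi_\vr|$ profile and the $r^3|\nabla\psi_\vr|$ profile give the \emph{same} $L^{2m/(m+1)}$ decay in $\vr$ (indeed $r^{2p}/(1+r^2)^{p/2}\leq r^{3p}\cdot(1+r^2)^{-p}$ up to bounded factors in the rescaled variable, as the paper notes when reducing $A_5$ to the $A_4$ integral), both with critical dimension $m=7$. So the $\La$-term is \emph{not} subdominant as your argument implicitly assumes; it contributes at the same $\vr^3$ (resp.\ $\vr^3|\ln\vr|^{4/7}$, $\vr^{(m-1)/2}$) order as the $b_{ij}-\de_{ij}$ term. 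To make the proof correct you should drop the spurious cancellation, keep $|\La|\lesssim r^2$, and run the rescaling computation directly on $\int_{|x|\leq 2\de}|x|^{2p}|\psi_\vr|^{p}\,dx$ with $p=\tfrac{2m}{m+1}$, verifying that it reproduces the same three-case bound.
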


\begin{proof}
In order to estimate $\bar R_\vr$,  we first deduce from \eqref{t1} and \eqref{t2} that
	\begin{\equ}\label{identity-D}
		\aligned
		D\va_\vr &= \nabla\eta\cdot\psi_\vr+\eta D\psi_\vr\\
		&=\nabla\eta\cdot\psi_\vr+|\va_\vr|^{2^*-2}\va_\vr+(\eta-\eta^{2^*-1})|\psi_\vr|^{2^*-2}\psi_\vr.
		\endaligned
	\end{\equ}
	Using \eqref{cut-off spinor identity}, we obtain
	\[
	\bar R_\vr=A_1+A_2+A_3+A_4+A_5+A_6
	\]
	where
	\begin{eqnarray*}
		&&A_1=\ov{\nabla\eta\cdot\psi_\vr},\\
		&&A_2=(\eta-\eta^{2^*-1})|\bar\psi_\vr|^{2^*-2}\bar\psi_\vr,\\
		&&A_3= \eta \Theta\cdot\bar\psi_\vr,\\
		&&A_4=\eta \La\cdot\bar\psi_\vr,\\
		&&A_5=\eta \sum_{i,j} (b_{ij}-\de_{ij})\ov{\pa_i\cdot\nabla_{\pa_j}\psi_\vr},\\
		&&A_6=\sum_{i,j}(b_{ij}-\de_{ij})\pa_j\eta\,\ov{\pa_i\cdot\psi_\vr}.
	\end{eqnarray*}
	
	In the following estimates we use that the support of $\eta$ is contained in $B_{2\de}(0)\subset\R^m$. For $\psi\in E$ with $\|\psi\|\leq1$ and $\vr$ small we estimate:\\
	
	$\displaystyle \begin{aligned}
		\|A_1\|_{E^*}
		&\lesssim\left(\int_{B_{2\de}(p_0)}\big|\ov{\nabla\eta\cdot\psi_\vr}\big|^{\frac{2m}{m+1}}d\vol_\ig \right)^{\frac{m+1}{2m}}
		\lesssim\left(\int_{\de\leq|x|\leq2\de}|\psi_\vr|^{\frac{2m}{m+1}}dx \right)^{\frac{m+1}{2m}}  \\[0.3em]
		&\lesssim\left( \vr^{\frac{2m}{m+1}}\int_{\frac\de\vr}^{\frac{2\de}\vr}\frac{r^{m-1}}{(1+r^2)^\frac{m(m-1)}{m+1}}dr \right)^{\frac{m+1}{2m}}
		\lesssim \ \vr^{\frac{m-1}2}
	\end{aligned}$\\
	
	\medskip
	
	$\displaystyle \begin{aligned}
		\|A_2\|_{E^*}
		&\lesssim \left(\int_{B_{2\de}(p_0)}\big(\eta-\eta^{\frac{m+1}{m-1}}\big)^{\frac{2m}{m+1}}|\bar\psi_\vr|^{\frac{2m}{m-1}}d\vol_\ig\right)^{\frac{m+1}{2m}} \\[0.3em]
		&\lesssim \left(\int_{\de\leq|x|\leq2\de}|\psi_\vr|^{\frac{2m}{m-1}}dx \right)^{\frac{m+1}{2m}}
		\lesssim \left(\int_{\frac\de\vr}^{\frac{2\de}\vr}\frac{r^{m-1}}{(1+r^2)^m}dr \right)^{\frac{m+1}{2m}} \\
		&\lesssim \vr^{\frac{m+1}2}
	\end{aligned}$\\
	
	\medskip
	
	$\displaystyle \begin{aligned}
		\|A_3\|_{E^*}
		&\lesssim \left(\int_{B_{2\de}(p_0)}|\Theta|^{\frac{2m}{m+1}}|\bar\psi_\vr|^{\frac{2m}{m+1}}d\vol_\ig \right)^{\frac{m+1}{2m}}
		\lesssim \left(\int_{|x|\leq2\de}|x|^{\frac{6m}{m+1}}|\psi_\vr|^{\frac{2m}{m+1}}dx \right)^{\frac{m+1}{2m}} \\[0.3em]
		&\lesssim \left( \vr^{\frac{8m}{m+1}}\int_0^{\frac{2\de}\vr}\frac{r^{\frac{6m}{m+1}+m-1}}{(1+r^2)^{\frac{m(m-1)}{m+1}}}dr \right)^{\frac{m+1}{2m}}
		\lesssim  \begin{cases}
			\vr^{\frac{m-1}2} &\text{if } 2\leq m\leq8\\
			\vr^4|\ln\vr|^{\frac59} &\text{if }  m=9\\
			\vr^4 &\text{if } m\geq10
		\end{cases}
	\end{aligned}$\\
	
	\medskip
	
	$\displaystyle \begin{aligned}
		\|A_4\|_{E^*}
		&\lesssim \left(\int_{B_{2\de}(p_0)}|\La|^{\frac{2m}{m+1}}|\bar\psi_\vr|^{\frac{2m}{m+1}}d\vol_\ig \right)^{\frac{m+1}{2m}}
		\lesssim \left(\int_{|x|\leq2\de}|x|^{\frac{4m}{m+1}}|\psi_\vr|^{\frac{2m}{m+1}}dx \right)^{\frac{m+1}{2m}}  \\[0.3em]
		&\lesssim \left(\vr^{\frac{6m}{m+1}}\int_0^{\frac{2\de}\vr}\frac{r^{\frac{4m}{m+1}+m-1}}{(1+r^2)^{\frac{m(m-1)}{m+1}}}dr \right)^{\frac{m+1}{2m}}
		\lesssim  \begin{cases}
			\vr^{\frac{m-1}2} &\text{if } 2\leq m\leq6\\
			\vr^3|\ln\vr|^{\frac47} &\text{if } m=7\\
			\vr^3 &\text{if } m\geq8
		\end{cases}
	\end{aligned}$\\

\noindent where we used $\La=-\frac16\sum_k\big(R_{\al k,\bt}x^\al x^\bt +O(r^3)\big)e_k=\sum_kO(r^2)e_k$, \\

	$\displaystyle \begin{aligned}
		\|A_5\|_{E^*}
		&\lesssim \left(\int_{|x|\leq2\de}|x|^{\frac{6m}{m+1}}|\nabla\psi_\vr|^{\frac{2m}{m+1}}dx \right)^{\frac{m+1}{2m}}
		\lesssim \left(\vr^{\frac{6m}{m+1}}\int_0^{\frac{2\de}\vr}\frac{r^{\frac{6m}{m+1}+m-1}}{(1+r^2)^{\frac{m^2}{m+1}}}dr \right)^{\frac{m+1}{2m}} \\[0.3em]
		&\leq \left(\vr^{\frac{6m}{m+1}}\int_0^{\frac{2\de}\vr}\frac{r^{\frac{4m}{m+1}+m-1}}{(1+r^2)^{\frac{m(m-1)}{m+1}}}dr \right)^{\frac{m+1}{2m}}
		\lesssim  \begin{cases}
			\vr^{\frac{m-1}2}  &\text{if } 2\leq m\leq6\\
			\vr^3|\ln\vr|^{\frac47} &\text{if } m=7\\
			\vr^3 &\text{if } m\geq8
		\end{cases}
	\end{aligned}$\\
	
	\noindent Here we used Lemma \ref{lem: zero identity1}, the inequality $|\nabla\psi(x)|\lesssim (1+|x|^2)^{-\frac m2}$ and the same estimate as for $\|A_4\|_{E^*}$. Finally there holds:\\
	
	$\displaystyle
	\|A_6\|_{E^*}
	\lesssim \left(\int_{|x|\leq2\de}|x|^{\frac{6m}{m+1}}|\psi_\vr|^{\frac{2m}{m+1}}dx \right)^{\frac{m+1}{2m}}
	\lesssim  \begin{cases}
		\vr^{\frac{m-1}2} &\text{if } 2\leq m\leq8\\
		\vr^4|\ln\vr|^{\frac59} &\text{if } m=9\\
		\vr^4 &\text{if } m\geq10
	\end{cases}
	$\\
	
	\noindent Here we used $|\nabla\eta(x)|\lesssim |x|$ and the same estimate as for $\|A_3\|_{E^*}$.
	
	From these estimates we finally obtain:
	\[
	\|\bar R_\vr\|_{E^*}
	\lesssim  \begin{cases}
		\vr^{\frac{m-1}2} &\text{if } 2\leq m\leq 6 \\
		\vr^3|\ln\vr|^{\frac47} &\text{if } m=7 \\
		\vr^3 &\text{if } m\geq8
	\end{cases}
	\]
\end{proof}

\begin{Lem}\label{energy-esti}
	Let $\bar\va_\vr\in\mbs(V)$ be as above, but with a particular choice of $\Psi_0$ in \eqref{t0}, and let $\om_m$ stand for the volume of the standard sphere $S^m$. If the constant $\tau$ in Proposition \ref{conformal normal coordinates} is chosen so that $\tau>-\frac1{90}$, then
	\[
		\aligned
		&\frac12\int_{M}(\bar D\bar\va_\vr,\bar\va_\vr)d\vol_\ig - \frac1{2^*}\int_M|\bar\va_\vr|^{2^*}d \vol_\ig  \\
		&\qquad   \leq \frac1{2m}\left(\frac{m}2\right)^m\om_m+\begin{cases}
			- C|W_\ig(p_0)|^2\vr^4|\ln\vr| & \text{if } m=4 \\
			- C|W_\ig(p_0)|^2\vr^4 & \text{if } m\geq5
		\end{cases} + \begin{cases}
			O(\vr^{m-1})  &\text{if } 4\leq m\leq 5, \\
			O(\vr^5|\ln\vr|) &\text{if } m=6,\\
			O(\vr^5) &\text{if } m\geq7,
		\end{cases}
	\endaligned
	\]
for some constant $C>0$.
\end{Lem}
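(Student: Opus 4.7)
The plan is to compute the two integrals on the left by transplanting everything to $\R^m$ through the Bourguignon–Gauduchon trivialization of Subsection \ref{B-G-T}, and then to control the resulting expansions using conformal normal coordinates at $p_0$. Since the fiber isomorphism is a pointwise isometry, $|\bar\va_\vr|_\ig(\exp_{p_0}(x))=|\va_\vr|(x)$, and the volume factor $\sqrt{\det G}\,dx=(1+O(r^N))\,dx$ is essentially trivial on the support of $\eta$ once $N\geq 5$. For the $L^{2^*}$-integral, one obtains $\int_M |\bar\va_\vr|^{2^*}\,d\vol_\ig = \int_{\R^m}\eta^{2^*}|\psi_\vr|^{2^*}\,dx + O(\vr^N)$; after rescaling $y=x/\vr$ this equals $(m/2)^m\om_m$ up to cutoff tails of order $O(\vr^m)$. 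The leading $\frac{1}{2m}(m/2)^m\om_m$ on the right-hand side then emerges by combining this with the principal piece of the Dirac-quadratic term, since $D\psi = |\psi|^{2^*-2}\psi$ on $\R^m$ implies $(D\va_\vr,\va_\vr)$ reproduces $|\va_\vr|^{2^*}$ up to cutoff-correction terms.

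The bulk of the work is to expand $\int_M (\bar D\bar\va_\vr,\bar\va_\vr)\,d\vol_\ig$. Using \eqref{cut-off spinor identity} together with \eqref{identity-D}, I would decompose the integrand into the principal piece $|\va_\vr|^{2^*}$ plus the six terms $A_1,\dots,A_6$ of Lemma \ref{derivative-esti}, each paired with $\bar\va_\vr$. The cutoff-induced pieces $A_1$, $A_2$, $A_6$ are supported in $\{\de\leq |x|\leq 2\de\}$ where $|\psi_\vr|\lesssim \vr^{(m-1)/2}$, hence contribute at most $O(\vr^{m-1})$. For the $\Theta$-term $A_3$, which is cubic in $x$, Lemma \ref{lem: zero identity2} allows me to choose $\Psi_0$ so that $\real(\pa_i\cdot\pa_j\cdot\pa_k\cdot\pa_l\cdot\Psi_0,\Psi_0)=0$ for mutually distinct $i,j,k,l$, preventing this term from generating a spurious order-$\vr^4$ contribution. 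The $\La$-term $A_4$ vanishes to leading order because $R_{ij}(p_0)=0$ in conformal normal coordinates, leaving only a higher-order residue.

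The decisive contribution comes from $A_5=\eta\sum_{i,j}(b_{ij}-\de_{ij})\ov{\pa_i\cdot\nabla_{\pa_j}\psi_\vr}$. Expanding $b_{ij}$ to fourth order in $r$ as in Subsection \ref{B-G-T}: the quadratic term vanishes identically when integrated against $\bar\va_\vr$ by Lemma \ref{lem: zero identity1}; the cubic term vanishes by odd parity in $x$ combined with Proposition \ref{conformal normal coordinates}(2); and the quartic term, after rescaling $y=x/\vr$, produces a $\vr^4$ coefficient given by an integral over $\R^m$ of a polynomial in $y$ divided by $(1+|y|^2)^m$. Invoking Proposition \ref{conformal normal coordinates}(3) to replace $R_{\al\bt,kl}x^\al x^\bt x^k x^l$ by $-\tfrac{22}{9}\sum_{i,d}R_{i\al\bt d}R_{ikld}x^\al x^\bt x^k x^l$, and using $R_{ij}(p_0)=0$ once more to discard every Ricci contraction, the surviving scalar integrand reduces to a positive multiple of $|W_\ig(p_0)|^2$. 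The dimensional weight $\int_{\R^m}|y|^4(1+|y|^2)^{-m}\,dy$ diverges logarithmically when $m=4$ and converges for $m\geq 5$, accounting for the $|\ln\vr|$ in the $m=4$ case. The stated remainders $O(\vr^{m-1})$, $O(\vr^5|\ln\vr|)$, and $O(\vr^5)$ then follow by combining the pointwise bounds used in Lemma \ref{derivative-esti} with the Cauchy–Schwarz inequality applied to the pairings $(A_i,\bar\va_\vr)$.

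The main obstacle is the sign analysis: after assembling the $A_5$-contribution with every other $O(\vr^4)$ piece coming from $A_4$, from the expansion of the volume form, and from cross terms, the net coefficient of $|W_\ig(p_0)|^2\vr^4$ must be shown to be a strictly negative universal constant independent of $p_0$. This is the spinorial analogue of Aubin's computation for the classical Yamabe problem, and it hinges crucially on the fact that conformal normal coordinates eliminate every Ricci contribution at the critical order, leaving only the conformally invariant Weyl tensor to dictate the sign.
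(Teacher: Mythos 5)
Your outline tracks the paper's proof quite closely: decompose the Dirac quadratic form through \eqref{cut-off spinor identity} and \eqref{identity-D}, work in conformal normal coordinates, kill the quadratic part of $b_{ij}$ by Lemma \ref{lem: zero identity1} and the $\Theta$-contribution by Lemma \ref{lem: zero identity2}, and invoke Proposition \ref{conformal normal coordinates}(3) to convert the quartic $b_{ij}$-piece into a curvature-squared expression that produces the $\vr^4|W_\ig(p_0)|^2$ term, with the $|\ln\vr|$ degeneracy in dimension four. Two points are off, however. First, the terms you assign an estimate ``$O(\vr^{m-1})$'' or a ``higher-order residue'' in the case of the $\La$-term do not merely have small magnitude --- they vanish \emph{identically}. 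The quantities $J_1$, $J_5$, $J_7$ in the paper's notation are real parts of integrals of $(v\cdot\bar\psi_\vr,\bar\psi_\vr)_\ig$ with $v$ a tangent vector, and Clifford multiplication by a vector is skew-Hermitian, so these inner products are purely imaginary; Ricci-flatness at $p_0$ is not what suppresses the $\La$-term. This also defuses the worry in your closing paragraph: there are no competing $O(\vr^4)$ pieces from $A_4$, the volume form (whose error is $O(r^N)$ with $N\geq5$ by Proposition \ref{conformal normal coordinates}), or cross terms. The sole $O(\vr^4)$ contribution is the quartic part of $J_6$, and there is nothing else to assemble.

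The substantive gap is the step you phrase as ``the surviving scalar integrand reduces to a positive multiple of $|W_\ig(p_0)|^2$.'' This is the actual crux of the lemma and is not free. After applying Proposition \ref{conformal normal coordinates}(3), the leading quartic piece of $J_6$ is (up to a fixed positive factor and a sign) $\sum_{i,d}R_{i\al\bt d}R_{ikld}\int_{|x|\le\de/\vr}x^\al x^\bt x^k x^l(1+|x|^2)^{-m}\,dx$, and neither its sign nor its comparability with $|W_\ig(p_0)|^2$ is immediate: one must split the sum into $\al\neq\bt$ and $\al=\bt$ pieces, use Cauchy--Schwarz to control the cross products $R_{i\al\bt d}R_{i\bt\al d}$, compare the two relevant moment integrals $\int|x^1|^4(1+|x|^2)^{-m}\,dx > \int|x^1x^2|^2(1+|x|^2)^{-m}\,dx$, and invoke the first Bianchi identity to obtain $\sum_{\al,\bt,d}R_{i\al\bt d}R_{i\bt\al d}=\tfrac12\sum_{\al,\bt,d}R_{i\al\bt d}^2$, after which $R_{ij}(p_0)=0$ lets you replace $R_{ijkl}$ by $W_{ijkl}$. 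Without these steps your proposal identifies the right target but does not establish the strict inequality.
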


\begin{proof}
	Analogously to the arguments in Lemma~\ref{derivative-esti}, we shall use \eqref{cut-off spinor identity} and
	\eqref{identity-D} in order to get
	\[
	\int_M(\bar D\bar\va_\vr,\bar\va_\vr)d\vol_\ig = J_1+J_2+\dots+J_7
	\]
	where
	\begin{eqnarray*}
		&&J_1 = \real\int_M \eta\cdot(\ov{\nabla\eta\cdot\psi_\vr},\bar\psi_\vr)d\vol_\ig \\
		&&J_2 = \int_M|\bar\va_\vr|^{2^*}d\vol_\ig \\
		&&J_3 = \int_M(\eta-\eta^{2^*-1})\cdot\eta\cdot|\bar\psi_\vr|^{2^*}d\vol_\ig \\
		&&J_4 = \real\int_M\eta^2\cdot(\Theta\cdot\bar\psi_\vr,\bar\psi_\vr)d\vol_\ig \\
		&&J_5 = \real\int_M\eta^2\cdot(\La\cdot\bar\psi_\vr,\bar\psi_\vr)d\vol_\ig \\
		&&J_6 = \real\sum_{i,j}\int_M\eta^2(b_{ij}-\de_{ij})(\ov{\pa_i\cdot\nabla_{\pa_j}\psi_\vr},\bar\psi_\vr)d\vol_\ig \\
		&&J_7 = \real\sum_{i,j}\int_M\eta\cdot(b_{ij}-\de_{ij})\pa_j\eta\cdot(\ov{\pa_i\cdot\psi_\vr},\bar\psi_\vr)d\vol_\ig.
	\end{eqnarray*}
	
	{In what follows, when no confusion can arise, an index occurs once in an upper (superscript) and once in a lower (subscript) position in a term implies summation of that term over all the values of the index (that is we keep using the Einstein summation convention). And the summations with respect to the indices only in lower (subscript) positions will still use the $\Sigma$-notation. To start with, we notice that $J_1=0$. And, by using the conformal normal coordinates in Proposition \ref{conformal normal coordinates}, we have:}
	
	
	\[
 \begin{aligned}
		J_2 &\leq\int_{|x|\leq\de}|\psi_\vr|^{2^*}dx+{O\left(\int_{\de<|x|\leq2\de}(1+|x|^4)|\psi_\vr|^{2^*}dx \right)} \\[0.3em]
		&\qquad  {+ \frac\tau2\sum_{i,d}R_{i\al \bt d}R_{ikld}\int_{|x|\leq\de}x^\al x^\bt x^k x^l |\psi_\vr|^{2^*}dx+ O\left(\int_{|x|\leq2\de}|x|^5|\psi_\vr|^{2^*}dx \right) }
	\end{aligned}
	\]
	
	\noindent
	{where we have used the Taylor expansion of $\sqrt{\det\ig_{ij}}$ (it can be induced directly from \eqref{conformal-normal-det}). Let us first evaluate the term with curvature tensors. By using $R_{ij}=0$, \eqref{t2} and \eqref{test spinor}, we can easily find (by combination of the symmetry of the integrand and the symmetry of the integral domain) that}
	\[{
	\aligned
	&R_{i\al \bt d}R_{ikld}\int_{|x|\leq\de}x^\al x^\bt x^k x^l |\psi_\vr|^{2^*}dx=\vr^4 m^mR_{i\al \bt d}R_{ikld}\int_{|x|\leq\frac\de\vr}\frac{x^\al x^\bt x^k x^l}{(1+|x|^2)^m}dx \\[0.3em]
	&\quad =\vr^4 m^m\sum_{\al\neq \bt}\big( R_{i\al\bt d}^2+ R_{i\al\bt d}R_{i\bt\al d} \big)\int_{|x|\leq\frac\de\vr}\frac{|x^\al x^\bt|^2}{(1+|x|^2)^m}dx \\[0.3em]
	&\qquad + \vr^4 m^m R_{i\al\al d}^2\int_{|x|<\frac\de\vr} \frac{|x^\al|^4}{(1+|x|^2)^m}dx + \vr^4m^m\sum_{\al\neq k}R_{i\al\al d}R_{i kk d}\int_{|x|<\frac\de\vr}\frac{|x^\al x^k|^2}{(1+|x|^2)^m}dx\\[0.3em]
	&\quad=\vr^4 m^m\sum_{\al\neq \bt}\big( R_{i\al\bt d}^2+ R_{i\al\bt d}R_{i\bt\al d} \big)\int_{|x|\leq\frac\de\vr}\frac{|x^1 x^2|^2}{(1+|x|^2)^m}dx \\[0.3em]
	&\qquad + \vr^4m^m\sum_\al R_{i\al\al d}^2\Big( \int_{|x|<\frac\de\vr} \frac{|x^1|^4-|x^1x^2|^2}{(1+|x|^2)^m}dx \Big),
	\endaligned }
	\]
	{where in the last equality we have used $\sum_{k=1,\, k\neq \al}^mR_{ikkd}=-R_{i\al\al d}$ (i.e. $R_{id}=0$) and the fact that those integrals are independent of the indices of the coordinates. Notice that, by Cauchy-Schwarz inequality, there hold}
	\begin{\equ}\label{curvature-tensor-inequ}
	{\sum_{\al\neq \bt}\big( R_{i\al\bt d}^2+ R_{i\al\bt d}R_{i\bt\al d} \big)=\sum_{\al>\bt}\big(R_{i\al\bt d}^2 + R_{i\bt\al d}^2 +2 R_{i\al\bt d}R_{i\bt\al d}\big)\geq0 }
	\end{\equ}
	and 
	\begin{\equ}\label{integral-inequ}
{	\int_{|x|\leq\frac\de\vr}\frac{|x^1|^4 }{(1+|x|^2)^m} dx=\frac12 \int_{|x|\leq\frac\de\vr}\frac{|x^1|^4+|x^2|^4 }{(1+|x|^2)^m} dx>
	\int_{|x|\leq\frac\de\vr}\frac{|x^1 x^2|^2 }{(1+|x|^2)^m} dx. }
	\end{\equ}
	{Hence the $4$th-order term in the upper bound of $J_2$ depends linearly on $\tau$, and we can derive the following refined estimates}
	
	$\displaystyle \begin{aligned}
		J_2&\leq m^m\om_{m-1}\int_0^{\frac\de\vr}\frac{r^{m-1}}{(1+r^2)^m}dr { \,+ \frac{\vr^4m^m\tau}{2}\sum_{i,d}R_{i\al\bt d}R_{ikld}\int_{|x|\leq\frac\de\vr}\frac{x^\al x^\bt x^k x^l}{(1+|x|^2)^m}dx } \\[0.3em]
	&\qquad 	+ O\left(\int_{\frac\de\vr}^{\frac{2\de}\vr}\frac{r^{m-1}}{(1+r^2)^m}dr \right) 
		+ {O\left(\vr^5\int_0^{\frac{2\de}\vr}\frac{r^{m+4}}{(1+r^2)^m}dr \right) }\\[0.3em]
		&= m^m\om_{m-1}\int_0^{\infty}\frac{r^{m-1}}{(1+r^2)^m}dr  { \,+ \frac{\vr^4m^m\tau}{2}\sum_{i,d}R_{i\al\bt d}R_{ikld}\int_{|x|\leq\frac\de\vr}\frac{x^\al x^\bt x^k x^l}{(1+|x|^2)^m}dx } \\[0.3em]
		&\qquad + O(\vr^m)
		+ {\begin{cases}
			O(\vr^m), & \text{if } 2\leq m\leq 4 \\
			O(\vr^5|\ln\vr|), &\text{if } m=5 \\
			O(\vr^5), &\text{if } m\geq6
		\end{cases} }\\
		&= m^m\om_{m-1}\int_0^{\infty}\frac{r^{m-1}}{(1+r^2)^m}dr { \,+ \frac{\vr^4m^m\tau}{2}\sum_{i,d}R_{i\al\bt d}R_{ikld}\int_{|x|\leq\frac\de\vr}\frac{x^\al x^\bt x^k x^l}{(1+|x|^2)^m}dx } \\[0.3em]
		&\qquad + {\begin{cases}
			O(\vr^m), & \text{if } 2\leq m\leq 4 \\
			O(\vr^5|\ln\vr|), &\text{if } m=5 \\
			O(\vr^5), &\text{if } m\geq 6
		\end{cases}  } 
	\end{aligned}$\\
	
	\noindent
	and $J_3\lesssim \int_{\de\leq|x|\leq2\de}|\psi_\vr|^{2^*}dx \lesssim \vr^m$.\\

To estimate $J_4$, we first note that 
	\[
	\aligned
	\Theta&=-\frac1{144}\sum_{\substack{i,j,k \\ i\neq j\neq k\neq i}}\sum_l R_{l\bt\ga k}(R_{ji\al l}+R_{jl\al i})x^\al x^\bt x^\ga e_i\cdot e_j \cdot e_k +O(r^4) \\
	&=\sum_{\substack{i,j,k \\ i\neq j\neq k\neq i}}A_{ijk\al\bt\ga}x^\al x^\bt x^\ga e_i\cdot e_j\cdot e_k + O(r^4)
	\endaligned
	\]
	and 
	\[
	J_4=\real\int_{B_\de(p_0)} + \real\int_{B_{2\de}(p_0)\setminus B_\de(p_0)}\eta^2\cdot(\Theta\cdot\bar\psi_\vr,\bar\psi_\vr)d\vol_\ig.
	\]	
	We have
	\[
	\aligned
	\real\int_{B_\de(p_0)}(\Theta\cdot\bar\psi_\vr,\bar\psi_\vr)d\vol_\ig&=\sum_{\substack{i,j,k \\ i\neq j\neq k\neq i}}A_{ijk\al\bt\ga}\real\int_{|x|\leq\de}x^\al x^\bt x^\ga (e_i\cdot e_j\cdot e_k\cdot\psi_\vr,\psi_\vr)dx  \\
	&\qquad + { O\left( \int_{|x|\leq\de} |x|^{4}|\psi_\vr|^2 dx\right)  }.
	\endaligned
	\]
	Using the explicit formula of $\psi$ in \eqref{t0}, we get
	\[
	\aligned
	&A_{ijk\al\bt\ga}\real\int_{|x|\leq\de}x^\al x^\bt x^\ga (e_i\cdot e_j\cdot e_k\cdot\psi_\vr,\psi_\vr)dx \\
	&\qquad = -\vr^4A_{ijk\al\bt\ga}\int_{|x|\leq \frac\de\vr}x^\al x^\bt x^\ga \frac{m^{m-1}}{(1+|x|^2)^m}2\real(\pa_i\cdot\pa_j\cdot\pa_k\cdot x\cdot\Psi_0,\Psi_0) dx \\
	&\qquad = \vr^4 \sum_l A_{ijkl} \real(\pa_i\cdot\pa_j\cdot\pa_k\cdot \pa_l\cdot\Psi_0,\Psi_0)
	\endaligned
	\]
	for some coefficients $A_{ijkl}$. And hence, by Lemma \ref{lem: zero identity2}, for a suitable choice of $\Psi_0$, we have
	\[
	J_4={O\left( \int_{\de\leq|x|\leq2\de}|x|^3|\psi_\vr|^2dx \right)} + O\left( \int_{|x|\leq\de} |x|^{4}|\psi_\vr|^2 dx \right).
	\]
	Hence
	\[
	\aligned
	J_4 &\lesssim {\int_{\de\leq|x|\leq2\de}|x|^3|\psi_\vr|^2dx} + \int_{|x|\leq\de}|x|^4|\psi_\vr|^2dx \\[0.3em]
	&\lesssim {\vr^4\int_{\frac\de\vr}^{\frac{2\de}{\vr}}\frac{r^{m+2}}{(1+r^2)^{m-1}}dr}+\vr^5\int_0^{\frac{\de}\vr}\frac{r^{m+3}}{(1+r^2)^{m-1}}dr
	\lesssim \begin{cases}
		\vr^{m-1} &\text{if } 2\leq m\leq 5,  \\
		\vr^5|\ln\vr| &\text{if } m=6,\\
		\vr^5 &\text{if } m\geq7.
	\end{cases}
	\endaligned
	\]

	We also have that $J_5=J_7=0$ since the integrands take pure imaginary values. Then, it only remains to evaluate $J_6$:\\
	
	$\displaystyle \begin{aligned}
		J_6 &= \real\sum_{i,j}\int_{|x|\leq\de}(b_{ij}-\de_{ij})(\pa_i\cdot\nabla_{\pa_j}\psi_\vr,\psi_\vr)dx+ {O\left( \vr^2\int_{\frac\de\vr\leq|x|\leq\frac{2\de}\vr}\frac{|x|^2(1+|x|)}{(1+|x|^2)^m}dx \right) }\\
		&\qquad + {O\left( \vr^{6}\int_{|x|\leq\frac{\de}\vr}\frac{|x|^{6}(1+|x|)}{(1+|x|^2)^m}dx \right) }\\
		& =\real\sum_{i,j}\int_{|x|\leq\de}(b_{ij}-\de_{ij})(\pa_i\cdot\nabla_{\pa_j}\psi_\vr,\psi_\vr)dx+ {\begin{cases}
			O(\vr^{m-1}) &\text{if } 2\leq m\leq 6,\\
			O(\vr^{6}|\ln\vr|) &\text{if } m=7,\\
			O(\vr^{6}) &\text{if } m\geq 8.
		\end{cases} }
	\end{aligned}$\\
	
	\noindent Note that
	\[
	\aligned
	\real(\pa_i\cdot\nabla_{\pa_j}\psi(x),\psi(x))&=-\frac{m^{m-1}}{(1+|x|^2)^m}\real\big(\pa_i\cdot\pa_j\cdot\Psi_0,\,(1-x)\cdot\Psi_0\big) \\
\endaligned
	\]
	we compute
	\begin{eqnarray*}
	&&\real\sum_{i,j}\int_{|x|\leq\de}(b_{ij}-\de_{ij})(\pa_i\cdot\nabla_{\pa_j}\psi_\vr,\psi_\vr)dx \\
	&&\qquad =-\frac{\vr^2}6\sum_{i,j}\int_{|x|\leq\frac\de\vr}R_{i\al\bt j}x^\al x^\bt \real(\pa_i\cdot\nabla_{\pa_j}\psi,\psi)dx \\
	&&\qquad \quad -\frac{\vr^3}{12}\sum_{i,j}\int_{|x|\leq\frac\de\vr}R_{i\al\bt j,k}x^\al x^\bt x^k \real(\pa_i\cdot\nabla_{\pa_j}\psi,\psi)dx \\
	&&\qquad \quad
	-\vr^4\sum_{i,j}\Big( \frac1{40}R_{i\al\bt j,kl}-\frac7{360}\sum_dR_{i\al\bt d}R_{jkld} \Big)\int_{|x|\leq\frac\de\vr}x^\al x^\bt x^k x^l \real(\pa_i\cdot\nabla_{\pa_j}\psi,\psi)dx.
	\end{eqnarray*}
	Since
	\[
	R_{i\al\bt j}x^\al x^\bt=R_{i\bt\al j}x^\al x^\bt
	=R_{j\al \bt i}x^\al x^\bt
	\]
	and $\pa_i\cdot\pa_j=-\pa_j\cdot\pa_i$ for $i\neq j$, we have
	\[
	\aligned
	&\sum_{i,j}\int_{|x|\leq\frac\de\vr}R_{i\al\bt j}x^\al x^\bt \real(\pa_i\cdot\nabla_{\pa_j}\psi,\psi)dx\\
	&\qquad =-\sum_{i,j}\int_{|x|\leq\frac\de\vr}\frac{m^{m-1}R_{i\al\bt j}x^\al x^\bt}{(1+|x|^2)^m} \real(\pa_i\cdot\pa_j\cdot\Psi_0,\Psi_0)dx\\
	&\qquad =-\int_{|x|\leq\frac\de\vr}\frac{m^{m-1}R_{\al\bt }x^\al x^\bt}{(1+|x|^2)^m} dx\\
	&\qquad =0
	\endaligned
	\]
	due to $R_{\al\bt}=0$ as was indicated by Proposition \ref{conformal normal coordinates}(1).
	
	Analogously
	\[
	\aligned
	&\sum_{i,j}\int_{|x|\leq\frac\de\vr}R_{i\al\bt j,k}x^\al x^\bt x^k \real(\pa_i\cdot\nabla_{\pa_j}\psi,\psi)dx \\
	&\qquad =\sum_{i,j}\int_{|x|\leq\frac\de\vr}\frac{m^{m-1}R_{i\al\bt j,k}x^\al x^\bt x^k}{(1+|x|^2)^m} \real(\pa_i\cdot\pa_j\cdot\Psi_0,x\cdot\Psi_0)dx \\
	&\qquad =\int_{|x|\leq\frac\de\vr}\frac{m^{m-1}R_{\al\bt ,k}x^\al x^\bt x^k}{(1+|x|^2)^m} \real(\Psi_0,x\cdot\Psi_0)dx =0 \\
	\endaligned
	\]
	since $\real(\Psi_0,x\cdot\Psi_0)=0$ for all $x\in\R^m$.

	Combining these estimates we deduce that
	\[
	\aligned
	J_6 &=-\vr^4\sum_{i}\Big( \frac1{40}R_{i\al\bt i,kl}-\frac7{360}\sum_dR_{i\al\bt d}R_{ikld} \Big)\int_{|x|\leq\frac\de\vr}\frac{m^{m-1}x^\al x^\bt x^k x^l}{(1+|x|^2)^m} dx \\
 &\qquad	+  {\begin{cases}
		O(\vr^{m-1}) &\text{if } 2\leq m\leq 6,\\
		O(\vr^{6}|\ln\vr|) &\text{if } m=7,\\
		O(\vr^{6}) &\text{if } m\geq 8.
	\end{cases} }
\endaligned
	\]
Recall the conformal normal coordinates in Proposition \ref{conformal normal coordinates}, we have that $W_{ijkl}=R_{ijkl}$ and
\begin{\equ}\label{XX1}
{\Big(R_{\al\bt,kl}+\Big(\frac{2}9+20\tau\Big)\sum_{i,d} R_{i\al\bt d}R_{ikld}\Big)x^\al x^\bt x^k x^l=0}
\end{\equ}
at $p_0$. {Notice that $R_{\al\bt,kl}=(\ig^{ij}R_{i\al j\bt})_{,kl}$, we find
\[
R_{\al\bt,kl}=\sum_i R_{i\al i\bt,kl}+\ig^{ij}_{,kl}R_{i\al j\bt}=-\sum_i R_{i\al \bt i,kl}+\frac13\sum_{i,j}(R_{iklj}+R_{ilkj})R_{i\al \bt j},
\]
where we have used 
\[
\ig^{ij}=\de_{ij}-\frac13 R_{i\al\bt j}x^\al x^\bt + O(r^3)
\]
and $R_{i\al j\bt}=-R_{i\al\bt j}$ in the above equality. And hence, \eqref{XX1} can be rewritten as
	\[
	{\sum_i\Big(R_{i\al\bt i,kl}-\Big( \frac89+20\tau \Big)\sum_{d} R_{i\al\bt d}R_{ikld}\Big)x^\al x^\bt x^k x^l=0}
	\]
	 which suggests
	\[
	\aligned
	J_6 &=-\Big(\frac1{360}+\frac\tau2\Big)\vr^4\sum_{i,d}R_{i\al\bt d}R_{ikld}\int_{|x|\leq\frac\de\vr}\frac{m^{m-1}x^\al x^\bt x^k x^l}{(1+|x|^2)^m} dx	+ \begin{cases}
	O(\vr^{m-1}) &\text{if } 2\leq m\leq 6,\\
	O(\vr^{6}|\ln\vr|) &\text{if } m=7,\\
	O(\vr^{6}) &\text{if } m\geq 8.
\end{cases} \\
	\endaligned
	\]	
Similar to the estimate of $J_2$, by using $R_{ij}=0$, we have
\begin{\equ}\label{E1}
\aligned
&R_{i\al\bt d}R_{ikld}\int_{|x|\leq\frac\de\vr}\frac{m^{m-1}x^\al x^\bt x^k x^l}{(1+|x|^2)^m} dx \\
&\quad = \sum_{\al\neq\bt}\big(R_{i\al\bt d}^2+R_{i\al\bt d}R_{i\bt\al d}\big)\int_{|x|\leq\frac\de\vr}\frac{m^{m-1}|x^\al x^\bt|^2 }{(1+|x|^2)^m} dx \\
&\qquad + \sum_\al R_{i\al\al d}^2\Big( \int_{|x|\leq\frac\de\vr}\frac{m^{m-1}\big(|x^1|^4 - |x^1x^2|^2\big) }{(1+|x|^2)^m}  dx \Big).
\endaligned
\end{\equ}
By virtue of \eqref{curvature-tensor-inequ} and \eqref{integral-inequ},  we can find a constant $C_1>0$ depends only on the dimension $m$ such that 
\[
\aligned
&R_{i\al\bt d}R_{ikld}\int_{|x|\leq\frac\de\vr}\frac{m^{m-1}x^\al x^\bt x^k x^l}{(1+|x|^2)^m} dx \\
&\quad \geq  C_1 min \Big\{\int_{|x|\leq\frac\de\vr}\frac{|x^1x^2|^2 }{(1+|x|^2)^m} dx,\, 
\int_{|x|\leq\frac\de\vr}\frac{\big(|x^1|^4 - |x^1x^2|^2\big) }{(1+|x|^2)^m}  dx
 \Big\} \sum_{\al,\bt} R_{i\al\bt d}R_{i\bt\al d},
 \endaligned
\]
where the integrals can be estimated via elementary calculations. Noticing
\[
\sum_{\al,\bt,d}R_{i\al\bt d}R_{i\bt\al d}=\sum_{\al,d,\bt}R_{i\al d\bt}R_{id\al \bt}=\frac12\sum_{\al,\bt,d}R_{i\al\bt d}\big(R_{i\bt\al d}-R_{id\al \bt} \big)=\frac12\sum_{\al,\bt,d}R_{i\al\bt d}^2
\]
by the symmetry of the curvature tensor and the Bianchi identity}, we then have
\begin{eqnarray*}
\Big(\frac12-\frac1{2^*}\Big)J_2+\frac12J_6 &\leq&  \frac{m^m\om_{m-1}}{2m}\int_0^{\infty}\frac{r^{m-1}}{(1+r^2)^m}dr  \\
& &{ \,- \frac1{720}\vr^4\sum_{i,d}R_{i\al\bt d}R_{ikld}\int_{|x|\leq\frac\de\vr}\frac{m^{m-1}x^\al x^\bt x^k x^l}{(1+|x|^2)^m}dx } \\[0.3em]
&&\qquad + {\begin{cases}
		O(\vr^{m-1}), & \text{if } 4\leq m\leq 5 \\
		O(\vr^5), &\text{if } m\geq 6
\end{cases}  } \\
&\leq&\frac{m^m\om_{m-1}}{2m}\int_0^{\infty}\frac{r^{m-1}}{(1+r^2)^m}dr  + \begin{cases}
	- C|W_\ig(p_0)|^2\vr^4|\ln\vr| & \text{if } m=4 \\
	- C|W_\ig(p_0)|^2\vr^4 & \text{if } m\geq5
\end{cases}	\\
& &+ { \begin{cases}
	O(\vr^{m-1}) &\text{if } 4\leq m\leq 5\\
	O(\vr^{5}) &\text{if } m\geq 6
\end{cases} }
\end{eqnarray*}
where $C>0$ is a constant depending only on the dimension. 

Finally, the assertion follows upon taking into account that
\[
\om_m=\int_{\R^m}\Big( \frac2{1+|x|^2} \Big)^mdx=2^m\om_{m-1}\int_0^\infty \frac{r^{m-1}}{(1+r^2)^m}dr
\]
in the computation of $J_2$.
\end{proof}


\subsection{For the case $\ker(D_\ig)=\{0\}$}

Now let us apply Theorem \ref{abstract thm} to the functional 
\[
\Phi(\psi)=\frac12\big( \|\psi^+\|^2-\|\psi^-\|^2 \big)-\frac1{2^*}\|\psi\|_{L^{2^*}}^{2^*} \quad \text{for } \psi\in E=H^{\frac12}(M,\mbs(M)).
\]
In fact, since $\ker(D_\ig)=\{0\}$, we have the orthogonal decomposition $E=E^+\op E^-$ with $E^\pm$ being the positive and negative eigenspaces of $D_\ig$ respectively, and the inner product is given by
\[
\inp{\psi}{\va}=\real(|D_\ig|^{\frac12}\psi,|D_\ig|^{\frac12}\va)_2 \quad \text{for } \psi,\va\in E.
\]
For the super-quadratic term 
\[
\Psi(\psi)=\frac1{2^*}\|\psi\|_{L^{2^*}}^{2^*},
\] 
we can compute
\[
\inp{\nabla\Psi(\psi)}{\va}=\real\int_M|\psi|_\ig^{2^*-2}(\psi,\va)_\ig d\vol_{\ig}
\]
and
\[
\inp{\nabla^2\Psi(\psi)[\va]}{\phi}=\real\int_M|\psi|_\ig^{2^*-2}(\phi,\va)_\ig d\vol_{\ig}+(2^*-2)\int_M|\psi|_\ig^{2^*-4}\real(\psi,\phi)_\ig\real(\psi,\va)_\ig d\vol_{\ig}.
\]
Hence, it follows directly that $\Psi$ satisfies the hypotheses (H1)-(H4). And
\[
\aligned
&\inp{\nabla^2\Psi(\psi)[\psi+\va]}{\psi+\va}-2\inp{\nabla\Psi(\psi)}{\va} \\
&\quad =\int_M|\psi|_\ig^{2^*-2}\Big[ |\va|_\ig^2+(2^*-1)|\psi|_\ig^2+2(2^*-2)\real(\psi,\va)_\ig+(2^*-2)\frac{|\real(\psi,\va)_\ig|^2}{|\psi|_\ig^2} \Big]d\vol_{\ig} \\
&\quad \geq\int_M|\psi|_\ig^{2^*-2}\Big[ (2^*-1)|\psi|_\ig^2+2(2^*-2)\real(\psi,\va)_\ig+(2^*-1)\frac{|\real(\psi,\va)_\ig|^2}{|\psi|_\ig^2} \Big]d\vol_{\ig} \\
&\quad \geq\Big((2^*-1)-\frac{(2^*-2)^2}{2^*-1} \Big)\int_M|\psi|_\ig^{2^*}d\vol_{\ig}=\frac{m+3}{m+1}\inp{\nabla\Psi(\psi)}{\psi}
\endaligned
\] 
where we used the fundamental inequalities
\[
|\va|_\ig^2\geq \frac{|\real(\psi,\va)_\ig|^2}{|\psi|_\ig^2} \quad \text{and} \quad
2|\real(\psi,\va)_\ig|\leq \frac{2^*-2}{2^*-1}|\psi|_{\ig}^2+\frac{2^*-1}{2^*-2}\frac{|\real(\psi,\va)_\ig|^2}{|\psi|_\ig^2}
\]
for $|\psi|_\ig\neq0$. This implies that $\Psi$ satisfies (H5).

Therefore, by Lemmas \ref{derivative-esti}, \ref{energy-esti} and a direct application of Theorem \ref{abstract thm}, we obtain the energy estimate 
\[
\aligned
c&=\inf_{\va\in E^+\setminus\{0\}}\max_{\psi\in\R^+\va\op E^-}\Phi(\psi)\\
&\leq\frac1{2m}\left(\frac{m}2\right)^m\om_m- C|W_\ig(p_0)|^2\vr^4 + \begin{cases}
	O(\vr^5|\ln\vr|) &\text{if } m=6,\\
	O(\vr^5) &\text{if } m\geq7.
\end{cases}
\endaligned
\]
From this, together with the compactness result Lemma \ref{PS Dirac}, we can see that $c<\ga_{crit}$ is a critical value of $\Phi$ and is attained.

\subsection{For the case $\ker(D_\ig)\neq\{0\}$}

In this case, we have
\[
E=H^{\frac12}(M,\mbs(M))=E^+\op E^0\op E^-
\]
where $E^\pm$ are positive and negative eigenspaces of $D_\ig$, respectively, and $E^0=\ker(D_\ig)$.
To deal with the non-trivial kernel, let us take
\[
T:L^{2^*}=L^{2^*}(M,\mbs(M))\to E^0, \quad T(\psi) = \arg\min_{\phi\in E^0} \|\psi-\phi\|_{L^{2^*}}^{2^*},
\]
i.e.\ $T(\psi)\in E^0$ is the best approximation of $\psi\in L^{2^*}$ in $E^0$. This exists and is unique because the $L^{2^*}$ norm is uniformly convex. Of course, $T$ is not linear in general. Evidently, we have $\|\psi\|_{L^{2^*}}^{2^*}\geq\|\psi-T(\psi)\|_{L^{2^*}}^{2^*}$ and therefore
\begin{equation}\label{def:tilde-e}
	\Phi(\psi) \le \widetilde\Phi(\psi) := \Phi(\psi-T(\psi))
	= \frac12\big(\|\psi^+\|^2-\|\psi^-\|^2\big) -\frac1{2^*}\int_M|\psi-T(\psi)|_\ig^{2^*}d\vol_\ig
\end{equation}
for all $\psi\in E$. Set
\[
\widetilde\Psi(\psi)=\frac1{2^*}\int_M|\psi-T(\psi)|_\ig^{2^*}d\vol_\ig
\]
we have the follow lemma.

\begin{Lem}\label{lemma widetilde-psi}
	\begin{itemize}
		\item[$(1)$] For $\psi\in E$, $\phi\in E^0$ and $t\in\R$, we have $T(t\psi)=tT(\psi)$, $T(\psi+\phi)=T(\psi)+\phi$ and $\widetilde\Psi(\psi+\phi)=\widetilde\Psi(\psi)$.
		
		\item[$(2)$] $T$ is $C^1$ on $E^+\op E^-\setminus\{0\}$ and $\nabla T(\psi)[\psi]=T(\psi)$ for $\psi\in E^+\op E^-\setminus\{0\}$.
		
		\item[$(3)$] $\widetilde\Psi$ is $C^2$ on $E^+\op E^-$ and is convex, moreover, $P^0\nabla\widetilde\Psi(\psi)=0$ for all
		$\psi\in E^+\op E^-$, where $P^0: E\to E^0$ is the orthogonal projector.

		\item[$(4)$] For any $\psi,\va\in E^+\op E^-$,
		\[
		\inp{\nabla^2\widetilde\Psi(\psi)[\psi+\va]}{\psi+\va}-2\inp{\nabla\widetilde\Psi(\psi)}{\va}\geq \frac{m+3}{m+1}\inp{\nabla\widetilde\Psi(\psi)}{\psi}.
		\]
		
		\item[$(5)$] Critical points of $\widetilde\Phi$ on $E^+\op E^-$ are in one-to-one correspondence with critical points of $\Phi$ on $E$ via the injective map $\psi\mapsto \psi-T(\psi)$ from $E^+\op E^-$ into $E$.
	\end{itemize}
\end{Lem}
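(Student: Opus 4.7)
The plan is to establish the five items in order, each building on the previous ones. For item $(1)$, the key observation is that the minimization defining $T(\psi)$ is over the linear subspace $E^0$ and the objective depends on $\psi$ only through the difference $\psi-\phi$; translation invariance follows immediately from the change of variables $\phi''=\phi'-\phi$, and homogeneity from the rescaling $\phi'\mapsto\phi'/t$. Uniqueness of the minimizer, guaranteed by strict convexity of the $L^{2^*}$-norm, makes both assignments single-valued, and $\widetilde\Psi(\psi+\phi)=\widetilde\Psi(\psi)$ is then a direct consequence.

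For $(2)$, I would apply the implicit function theorem to the Euler--Lagrange map
\[
F(\psi,\phi)=P^0\bigl(|\psi-\phi|_\ig^{2^*-2}(\psi-\phi)\bigr)\in E^0,\qquad F(\psi,T(\psi))=0.
\]
The partial derivative $\partial_\phi F(\psi,T(\psi))$ coincides, up to sign, with the restriction of $\nabla^2\Psi(\psi-T(\psi))$ to the finite-dimensional space $E^0$, which is strictly positive definite whenever $\psi-T(\psi)\neq 0$. This gives $C^1$-regularity of $T$ on $E^+\op E^-\setminus\{0\}$, and differentiating the homogeneity identity $T(t\psi)=tT(\psi)$ in $t$ at $t=1$ yields $\nabla T(\psi)[\psi]=T(\psi)$. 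For $(3)$, setting $S(\psi):=\psi-T(\psi)$ so that $\widetilde\Psi=\Psi\circ S$, the Euler--Lagrange condition $P^0\nabla\Psi(S(\psi))=0$ together with $\nabla T(\psi)[\xi]\in E^0$ collapses the chain rule to $\inp{\nabla\widetilde\Psi(\psi)}{\xi}=\inp{\nabla\Psi(S(\psi))}{\xi}$, from which $P^0\nabla\widetilde\Psi(\psi)=0$ follows by testing against $\xi\in E^0$. Convexity of $\widetilde\Psi$ is obtained by using the convex combination $(1-t)T(\psi_1)+tT(\psi_2)\in E^0$ as a suboptimal test point in the minimization defining $\widetilde\Psi(\psi_t)$ and invoking convexity of $|\cdot|_{2^*}^{2^*}$; the $C^2$-smoothness follows from differentiating the simplified gradient formula once more, using the $C^1$-regularity of $T$.

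For $(4)$, differentiating both the gradient identity $\inp{\nabla\widetilde\Psi(\psi)}{\xi}=\inp{\nabla\Psi(S(\psi))}{\xi}$ and the EL equation $P^0\nabla\Psi(S(\psi))=0$ in a direction $\eta$ yields
\[
\inp{\nabla^2\widetilde\Psi(\psi)[\eta]}{\xi}=\inp{\nabla^2\Psi(S(\psi))[\nabla S(\psi)[\eta]]}{\nabla S(\psi)[\xi]},
\]
after absorbing the $E^0$-components that the differentiated EL equation annihilates. Specializing to $\eta=\xi=\psi+\va$ and using $\nabla S(\psi)[\psi]=S(\psi)$, the desired inequality reduces directly to hypothesis (H5) for $\Psi$ applied with $z=S(\psi)$ and $w=\nabla S(\psi)[\va]$, together with $\inp{\nabla\widetilde\Psi(\psi)}{\psi}=\inp{\nabla\Psi(S(\psi))}{S(\psi)}$. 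Item $(5)$ then follows from two observations: $\widetilde\Phi$ is $E^0$-invariant by $(1)$ and the orthogonality of $E^0$ to $E^\pm$ in the quadratic part, so a critical point of $\widetilde\Phi$ on $E^+\op E^-$ is automatically a critical point on all of $E$; combined with $\nabla\widetilde\Psi(\psi)=\nabla\Psi(\psi-T(\psi))$ and $P^0\nabla\Psi(\psi-T(\psi))=0$, this gives $\nabla\Phi(\psi-T(\psi))=0$. Conversely, any critical point $\tilde\psi$ of $\Phi$ satisfies $P^0(|\tilde\psi|_\ig^{2^*-2}\tilde\psi)=0$, which by the EL characterization of $T$ forces $T(\tilde\psi^++\tilde\psi^-)=-\tilde\psi^0$, so $\psi=\tilde\psi^++\tilde\psi^-\in E^+\op E^-$ reproduces $\tilde\psi=\psi-T(\psi)$.

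I expect the main technical obstacle to be the regularity of $T$. The Nemytskii-type nonlinearity $u\mapsto|u|_\ig^{2^*-2}u$ is, between the relevant Lebesgue spaces, only H\"older continuous when $2^*-2=2/(m-1)<1$, which is the relevant regime. However, because $E^0$ is finite-dimensional (so the implicit function theorem is applied into a finite-dimensional target) and because $S(\psi)\neq 0$ for $\psi\in E^+\op E^-\setminus\{0\}$ (keeping us away from the singular locus of $|u|^{2^*-2}u$), the classical finite-dimensional implicit function theorem suffices for $C^1$-regularity of $T$ and hence $C^2$-regularity of $\widetilde\Psi$. The subtle bookkeeping is in $(4)$, where the two successive uses of the differentiated EL equation are what yields the symmetric form of $\nabla^2\widetilde\Psi$ and thereby the clean reduction to (H5).
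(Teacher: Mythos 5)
Your proposal follows essentially the same route as the paper, and most of it is sound: $(1)$ by translation/scaling invariance of the constrained minimization, $(2)$ by the implicit function theorem applied to the Euler--Lagrange map into $E^0$, $(3)$--$(4)$ by the chain rule together with the differentiated Euler--Lagrange condition (your reduction of $(4)$ to (H5) for $\Psi$ with $z=S(\psi)$, $w=\nabla S(\psi)[\va]$ is a nice clean packaging of what the paper computes by hand), and $(5)$ by combining $E^0$-invariance with $P^0\nabla\Psi(S(\psi))=0$.

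There is, however, one genuine gap in $(2)$: you assert that the restriction of $\nabla^2\Psi(\psi-T(\psi))$ to $E^0$ is \emph{strictly} positive definite whenever $\psi-T(\psi)\neq 0$, but this is not automatic. The Hessian bound reads
\[
\inp{\nabla^2\Psi_\psi(T(\psi))[\va]}{\va}\geq\int_M|\psi-T(\psi)|_\ig^{2^*-2}|\va|_\ig^2\,d\vol_\ig,
\]
and the right-hand side could a priori vanish for some nonzero $\va\in E^0$ if $\va$ vanished identically on the support of $\psi-T(\psi)$. What rules this out is that $\va$ is a harmonic spinor ($\va\in\ker D_\ig$), and B\"ar's theorem on nodal sets of eigenspinors guarantees that $\va$ cannot vanish on a set of positive measure unless $\va\equiv 0$; combined with the fact that $\{\psi-T(\psi)\neq 0\}$ has positive measure, the integral above is strictly positive. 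The paper invokes B\"ar (1997) at exactly this point, and without that input the non-degeneracy needed for the implicit function theorem is not established. A secondary, more minor issue is that your argument for $C^2$-smoothness of $\widetilde\Psi$ in $(3)$ — differentiating the gradient formula using the $C^1$-regularity of $T$ — works only away from $\psi=0$, where $T$ is genuinely not differentiable; the paper handles the origin separately via the $2^*$-homogeneity $\widetilde\Psi(t\psi)=t^{2^*}\widetilde\Psi(\psi)$, showing $\nabla\widetilde\Psi(0)=0$ and $\nabla^2\widetilde\Psi(0)=0$ directly, and you should add a line to the same effect.
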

\begin{proof}
Since $(1)$ is a straightforward consequence of the definition of $T$, we start with the proof of $(2)$.

Let us fix a $\psi\in E^+\op E^-\setminus\{0\}$ and consider the functional $\Psi_\psi$ defined on $E^0$ by
\[
\Psi_\psi(\phi)=\frac1{2^*}\int_M|\psi-\phi|_\ig^{2^*}d\vol_\ig, \quad \phi\in E^0.
\]
Then we have $\Psi_\psi$ is $C^2$  and $\Psi_\psi(\phi)\geq\frac1{2^*}( |\phi|_{2^*}-|\psi|_{2^*} )^{2^*}$, that is $\Psi_\psi$ is coercive. Moreover, for $\phi,\va\in E^0$, we have
\[
\inp{\nabla\Psi_\psi(\phi)}{\va}=-\real\int_M|\psi-\phi|_\ig^{2^*-2}(\psi-\phi,\va)_\ig d\vol_\ig
\]
and
\[
\aligned
\inp{\nabla^2\Psi_\psi(\phi)[\va]}{\va}&=\int_M\Big[|\psi-\phi|_\ig^{2^*-2}|\va|_\ig^2+(2^*-2)|\psi-\phi|_\ig^{2^*-4}|\real(\psi-\phi,\va)_\ig|^2\, \Big]d\vol_\ig\\[0.3em]
&\geq\int_M|\psi-\phi|_\ig^{2^*-2}|\va|_\ig^2d\vol_\ig.
\endaligned
\]
Hence $\Psi_\psi$ is convex.
And since $E^0$ is a finite dimensional eigenspace of the Dirac operator, by the convexity and coerciveness of $\Psi_\psi$, we have $T(\psi)$ is the unique strict minimum point for $\Psi_{\psi}$ on $E^0$, which is also the only critical point of $\Psi_{\psi}$. By virtue of the above observation and B\"ar's theorem \cite{Bar1997} on nodal sets of the eigenspinors, we can see that the self-adjoint operator 
\[
\va\mapsto\nabla^2\Psi_\psi(T(\psi))[\va], \quad \forall \va\in E^0
\]
is invertible. Hence, the implicit function theorem implies $T:E^+\op E^-\setminus\{0\}\to E^0$ is of $C^1$.

Although $(1)$ implies that $T$ cannot be differentiable at $0$, we shall show that $\widetilde\Psi$ is still of class $C^2$. In fact, by $\widetilde\Psi(0)=0$, we find that
\[
\widetilde\Psi(t\psi)-\widetilde\Psi(0)=\frac1{2^*}\int_M|t\psi-T(t\psi)|_\ig^{2^*}d\vol_\ig=\frac{t^{2^*}}{2^*}\int_M|\psi-T(\psi)|_\ig^{2^*}d\vol_\ig
\]
for $\psi\neq0$. Hence, combining the fact $2^*>2$, we see  that $\nabla\widetilde\Psi(0)=0$. 
For $\psi\in E^+\op E^-\setminus\{0\}$, it follows directly that $P^0\nabla\widetilde\Psi(\psi)=0$ and that
\[
\aligned
\inp{\nabla\widetilde\Psi(\psi)}{\va}&=\real\int_M|\psi-T(\psi)|^{2^*-2}(\psi-T(\psi),\va-\nabla T(\psi)[\va])_\ig d\vol_\ig \\[0.3em]
&=\real\int_M|\psi-T(\psi)|^{2^*-2}(\psi-T(\psi),\va-T(\va))_\ig d\vol_\ig \\[0.3em]
&=\real\int_M|\psi-T(\psi)|^{2^*-2}(\psi-T(\psi),\va)_\ig d\vol_\ig
\endaligned
\]
for all $\va\in E^+\op E^-$. Then we have the derivative of $\widetilde\Psi$ is continuous in $\psi$, and hence $\widetilde\Psi$ is of class $C^1$ on $E^+\op E^-$.

For the $C^2$-smoothness, we only need to show that the second derivative of $\widetilde\Psi$ exists and is continuous at $0$. To this end, let us take again $\psi\in E^+\op E^-\setminus\{0\}$ and $\va\in E^+\op E^-$. For the second derivative at $0$,  we first note that
\[
\inp{\nabla\widetilde\Psi(t\psi)}{\va}=t^{2^*-1}\real\int_M|\psi-T(\psi)|_\ig^{2^*-2}(\psi-T(\psi),\va)_\ig d\vol_\ig.
\]
Since $2^*>2$ and $\nabla\widetilde\Psi(0)=0$, this implies that $\widetilde\Psi$ is twice differentiable at $0$ and $\nabla^2\widetilde\Psi(0)=0$. For $\psi\neq0$, by taking the derivative in the direction of $\va$ at both sides of $P^0\nabla\widetilde\Psi(\psi)=0$, we have $P^0\nabla^2\widetilde\Psi(\psi)[\va]=0$. Hence, we see that
\[
\aligned
\inp{\nabla^2\widetilde\Psi(\psi)[\phi]}{\va}&=\frac2{m-1}\int_M|\psi-T(\psi)|_\ig^{2^*-4}\real(\psi-T(\psi),\phi)_\ig\real(\psi-T(\psi),\va)_\ig d\vol_\ig  \\
&\quad + \real\int_M|\psi-T(\psi)|_\ig^{2^*-2}(\phi,\va)_\ig d\vol_\ig
\endaligned
\]
for $\phi,\va\in E^+\op E^-$. It follows that $\nabla^2\widetilde\Psi$ is continuous when $\psi$ approaches $0$, and hence $\widetilde\Psi$ is of class $C^2$.

The convexity of $\widetilde\Psi$ is an immediate consequence of the above calculation and, furthermore, we have
\[
\aligned
&\inp{\nabla^2\widetilde\Psi(\psi)[\psi+\va]}{\psi+\va}-2\inp{\nabla\widetilde\Psi(\psi)}{\va} \\
&\quad =\int_M\Big[|\psi-T(\psi)|_\ig^{2^*-2}\big|\psi+\va-\nabla T(\psi)[\psi+\va]\big|_\ig^2 \\
&\qquad +(2^*-2)|\psi-T(\psi)|_\ig^{2^*-4}\big|\real\big(\psi-T(\psi),\psi+\va-\nabla T(\psi)[\psi+\va]\big)\big|_\ig^2 \Big]d\vol_\ig \\
&\qquad -2\real\int_M|\psi-T(\psi)|_\ig^{2^*-2}(\psi-T(\psi),\va-\nabla T(\psi)[\va])_\ig d\vol_\ig.
\endaligned
\]
For simplicity, we denote $z=\psi-T(\psi)$ and $w=\va-\nabla T(\psi)[\va]$. Then, using the fact $\nabla T(\psi)[\psi]=T(\psi)$, we can see from the above equality that
\[
\aligned
&\inp{\nabla^2\widetilde\Psi(\psi)[\psi+\va]}{\psi+\va}-2\inp{\nabla\widetilde\Psi(\psi)}{\va} \\
&\quad = \int_M\Big[ |z|_\ig^{2^*-2}|z+w|_\ig^2+(2^*-2)|z|_\ig^{2^*-4}|\real(z,z+w)_\ig|^2-2 |z|_\ig^{2^*-2}\real(z,w)_\ig \Big]d\vol_\ig \\
&\quad=\int_M|z|_\ig^{2^*-2}\Big[|w|_\ig^2+(2^*-1)|z|_\ig^2 +2(2^*-2)\real(z,w)_\ig+(2^*-2)\frac{|\real(z,w)_\ig|^2}{|z|_\ig^2}\Big] d\vol_\ig \\
&\quad \geq\int_M|z|_\ig^{2^*-2}\Big[(2^*-1)|z|_\ig^2 +2(2^*-2)\real(z,w)_\ig+(2^*-1)\frac{|\real(z,w)_\ig|^2}{|z|_\ig^2}\Big] d\vol_\ig \\
&\quad \geq \Big( (2^*-1)-\frac{(2^*-2)^2}{2^*-1} \Big)\int_M|z|_\ig^{2^*}d\vol_\ig = \frac{m+3}{m+1}\int_M|\psi-T(\psi)|_\ig^{2^*}d\vol_\ig
\endaligned
\]
where we used again the fundamental inequalities
\[
|w|_\ig^2\geq \frac{|\real(z,w)_\ig|^2}{|z|_\ig^2} \quad \text{and} \quad
2|\real(z,w)_\ig|\leq \frac{2^*-2}{2^*-1}|z|_\ig^2+ \frac{2^*-1}{2^*-2}\frac{|\real(z,w)_\ig|^2}{|z|_\ig^2}
\]
for $|z|_\ig\neq0$. This proves $(4)$. 

Finally, to see $(5)$, we first mention that if $\va\in H^{1/2}(M,\mbs(M))$ is a critical point of $\Phi$ then $\real\int_M|\va|_\ig^{2^*-2}(\va,\phi)_\ig d\vol_\ig=0$ for all $\phi\in E^0$. This implies $P^0\va=-T(\va)$ and $\va-P^0\va\in E^+\op E^-$ is a critical point of $\widetilde\Phi$. On the other hand, if $\psi\in E^+\op E^-$ is a critical point of $\widetilde\Phi$, it follows directly from the expression of $\nabla\widetilde\Psi$ that $\psi-T(\psi)$ 
is a critical point of $\Phi$. This completes the whole proof.
\end{proof}

Thanks to Lemma \ref{lemma widetilde-psi}, we can reduce the problem to the search of critical points of $\widetilde\Phi$ on the closed subspace $E^+\op E^-\subset H^{1/2}(M,\mbs(M))$. And it can be easily seen from the computations in the proof of Lemma \ref{lemma widetilde-psi} that $\widetilde\Psi$ satisfies the hypotheses (H1)-(H5) as we can use the explicit expressions of $\nabla\widetilde\Psi$ and $\nabla^2\widetilde\Psi$. Now we address our main inequality using our test spinor $\bar\va_\vr$ in \eqref{test spinor}. To start with, we have
\begin{\equ}\label{l1-norm}
	\|\bar\va_\vr\|_{L^1}=\int_M|\bar\va_\vr|_\ig d\vol_\ig \lesssim \vr^{\frac{m-1}2}
	\qquad\text{and}\qquad
	\|\bar\va_\vr\|_{L^{2^*-1}}^{2^*-1}=\int_M|\bar\va_\vr|_\ig^{2^*-1}d\vol_\ig \lesssim \vr^{\frac{m-1}2}.
\end{\equ}
As an immediate consequence of these two estimates, jointly with the $L^{2^*}$-boundedness of $\bar\va_\vr$ (see in the proof of Lemma \ref{energy-esti} the estimate of $J_2$), we have that $\bar\va_\vr\rightharpoonup 0$ in $L^{2^*}(M,\mbs(M))$. By Lemma \ref{lemma widetilde-psi} $(3)$, we can get that
\begin{\equ}\label{T-va-0}
	 \real \int_M|\bar\va_\vr-T(\bar\va_\vr)|_\ig^{2^*-2}(\bar\va_\vr-T(\bar\va_\vr), T(\bar\va_\vr))_\ig d\vol_\ig \equiv 0 
\end{\equ}
for all $\vr>0$. And hence, we can deduce from the boundedness of $T(\va_\vr)$ in $E^0$ and $\dim E^0<\infty$ that $T(\bar\va_\vr)\to0$ as $\vr\to0$ (since $E^0$ is of finite dimension, this convergence will be valid in any norm). Furthermore, we can also deduce from the definition of $\bar\va_\vr$ and \eqref{T-va-0}  that
\[
\aligned
\int_{M\setminus B_{2\de}(p_0)}|T(\bar\va_\vr)|_\ig^{2^*}d\vol_\ig &\leq \int_M|\bar\va_\vr-T(\bar\va_\vr)|_\ig^{2^*-2}|T(\bar\va_\vr)|_\ig^2d\vol_\ig \\
&=\real\int_M|\bar\va_\vr-T(\bar\va_\vr)|_\ig^{2^*-2}(\bar\va_\vr, T(\bar\va_\vr))_\ig d\vol_\ig.
\endaligned
\]
This implies that
\[
\aligned
\|T(\bar\va_\vr)\|_{L^{\infty}}^{2^*}&\lesssim \int_{M\setminus B_{2\de}(p_0)}|T(\bar\va_\vr)|_\ig^{2^*}d\vol_\ig  \lesssim\int_M\Big( |\bar\va_\vr|_\ig^{2^*-1}|T(\bar\va_\vr)|_{\ig}+|T(\bar\va_\vr)|_{\ig}^{2^*-1}|\bar\va_\vr|_\ig   \Big)d\vol_\ig \\
&\leq \|T(\bar\va_\vr)\|_{L^{\infty}}\int_M|\bar\va_\vr|_\ig^{2^*-1} d\vol_\ig + \|T(\bar\va_\vr)\|_{L^{\infty}}^{2^*-1}\int_M|\bar\va_\vr|_\ig d\vol_\ig.
\endaligned
\]
Then, using the facts $2^*>2$ and $\|T(\bar\va_\vr)\|_{L^{\infty}}\to0$ as $\vr\to0$, we conclude from \eqref{l1-norm} that
\begin{\equ}\label{l-infty-norm}
	 \|T(\bar\va_\vr)\|_{L^\infty} \lesssim 
	 \vr^{\frac{(m-1)^2}{2(m+1)}}.
\end{\equ}
And thus we get
\begin{eqnarray}\label{test-T-1}
		&&\Big| \int_M|\bar\va_\vr-T(\bar\va_\vr)|_\ig^{2^*}-|\bar\va_\vr|_\ig^{2^*} d\vol_\ig \Big|  \nonumber \\
		&&\qquad 
		\leq \Big|\int_M|\bar\va_\vr-\theta T(\bar\va_\vr)|_\ig^{2^*-2}(\bar\va_\vr-\theta T(\bar\va_\vr), T(\bar\va_\vr))_\ig \,d\vol_\ig \Big| \nonumber\\
		&&\qquad
		\leq \int_M|\bar\va_\vr-\theta T(\bar\va_\vr)|_\ig^{2^*-1}\cdot|T(\bar\va_\vr)|_\ig\,d\vol_\ig  \nonumber \\
		&&\qquad 
		\lesssim \int_M \Big( |\bar\va_\vr|_\ig^{2^*-1}|T(\bar\va_\vr)|_\ig+|T(\bar\va_\vr)|_\ig^{2^*} \Big) d\vol_\ig  \nonumber \\
		&&\qquad 
		\lesssim \|T(\bar\va_\vr)\|_{L^\infty}  \int_M|\bar\va_\vr|_\ig^{2^*-1}d\vol_\ig + \|T(\bar\va_\vr)\|_{L^\infty}^{2^*}
		\lesssim \vr^{\frac{m(m-1)}{m+1}}
\end{eqnarray}
where $\theta\in(0,1)$. Moreover, for any $\psi\in E$ with $\|\psi\|\leq1$, we can deduce from \eqref{l1-norm} and \eqref{l-infty-norm} that
\begin{eqnarray}\label{test-T-2}
	&&\Big| \int_M|\bar\va_\vr-T(\bar\va_\vr)|_\ig^{2^*-2}
	(\bar\va_\vr-T(\bar\va_\vr),\psi)_\ig -
	|\bar\va_\vr|_\ig^{2^*-2}(\bar\va_\vr,\psi)_\ig d\vol_\ig \Big|
	\nonumber\\
	&& \qquad \qquad\leq
	(2^*-1)\int_M|\bar\va_\vr-\theta
	T(\bar\va_\vr)|_\ig^{2^*-2}\cdot
	|T(\bar\va_\vr)|_\ig\cdot |\psi|_\ig d\vol_\ig  \nonumber \\
	&& \qquad \qquad\lesssim
	\int_M\Big(|\bar\va_\vr|_\ig^{2^*-2}\cdot|T(\bar\va_\vr)|_\ig\cdot
	|\psi|_\ig+ |T(\bar\va_\vr)|_\ig^{2^*-1}\cdot|\psi|_\ig \Big) d\vol_\ig
	\nonumber  \\
	&& \qquad \qquad \lesssim
	\|T(\bar\va_\vr)\|_{L^\infty}  \cdot \Big(\int_M|\bar\va_\vr|^{\frac{4m}{m^2-1}}d\vol_\ig\Big)^{\frac{m+1}{2m}}
	+ \|T(\bar\va_\vr)\|_{L^\infty}^{2^*-1}   
	\lesssim \vr^{\frac{m-1}2}. 
\end{eqnarray}
Notice that $\widetilde\Phi(\bar\va_\vr-P^0\bar\va_\vr)=\widetilde\Phi(\bar\va_\vr)$ and  $\nabla\widetilde\Phi(\bar\va_\vr-P^0\bar\va_\vr)=\nabla \widetilde\Phi(\bar\va_\vr)$ (this is because $T(\bar\va_\vr-P^0\bar\va_\vr)= T(\bar\va_\vr)-P^0\bar\va_\vr$ from the definition of $T$). We can then infer from \eqref{test-T-1}-\eqref{test-T-2} and Lemmas \ref{derivative-esti} and \ref{energy-esti} that $\{\widetilde\va_\vr:=\bar\va_\vr-P^0\bar\va_\vr\}_{\vr>0} \subset E^+\op E^-$ contains a $(PS)$-sequence for $\widetilde\Phi$ (by considering a decreasing sequence $\vr_n\searrow 0$). Combining these facts and  Theorem \ref{abstract thm}, we finally obtain
\[
\aligned
\tilde c&=\inf_{\va\in E^+\setminus\{0\}}\max_{\psi\in\R^+\va\op E^-}\widetilde\Phi(\psi)\\
&\leq\frac1{2m}\left(\frac{m}2\right)^m\om_m- C|W_\ig(p_0)|^2\vr^4 + \begin{cases}
	O(\vr^{\frac{30}7}) &\text{if } m=6,\\
	O(\vr^5) &\text{if } m\geq7,
\end{cases}
\endaligned
\]
which suggests $\tilde c< \ga_{crit}$. This, together with Lemma \ref{PS Dirac}, implies that $\tilde c$ is a critical value of $\widetilde\Phi$ (and hence of $\Phi$) and is attained.


\begin{thebibliography}{SK}
	
	\normalsize
	\baselineskip=16pt
	
\bibitem{Adams}R. Adams,
Sobolev Spaces,
Academic Press, New York 1975.

\bibitem{Ackermann}N. Ackermann,
A nonlinear superposition principle and multibump solution of periodic Schr\"{o}dinger equations,
J. Funct. Anal. 234 (2006), 423-443.	

\bibitem{Amann}H. Amann, 
Saddle points and multiple solutions of differential equations, 
Math. Z. 169:2 (1979) 127-166.

\bibitem{AR}A. Ambrosetti, P.H. Rabinowitz,
Dual variational methods in critical point theory and applications, 
J. Funct. Anal. 14 (1973), 349-381.
	
\bibitem{Ammann}B. Ammann,
A variational problem in conformal spin geometry,
Habilitationsschrift, Universit\"at Hamburg 2003.

\bibitem{Ammann2003} B. Ammann,
A spin-conformal lower bound of the first positive Dirac eigenvalue,
Differ. Geom. Appl. 18:1 (2003), 21-32.

\bibitem{Ammann2009} B. Ammann,
The smallest Dirac eigenvalue in a spin-conformal class and cmc immersions, 
Commun. Anal. Geom. 17:3 (2009), 429-479.

\bibitem{ADH}B. Ammann, M. Dahl, E. Humbert,
Harmonic spinors and local deformations of the metric,
Comm. Anal. Geom. 18 (2011), 927-936.

\bibitem{ADHH}B. Ammann, M. Dahl, A. Hermann, E. Humbert,
Mass endomorphism, surgery and perturbations,
Ann. Inst. Fourier. 64 (2014), no. 2, 467-487.

\bibitem{AGHM}B. Ammann, J.-F. Grossjean, E. Humbert, B. Morel,
A spinorial analogue of Aubin's inequality,
Math. Z. 260 (2008), 127-151.

\bibitem{AHM}B. Ammann, E. Humbert, B. Morel,
Mass endomorphism and spinorial Yamabe type problems on conformally flat manifolds,
Comm. Anal. Geom. 14:1 (2006), 163-182.
	
\bibitem{Aubin}T. Aubin, 
\'Equations diff\'erentielles non lin\'eaires et probl\'eme de Yamabe concernant la courbure scalaire,
J. Math. Pures Appl. 55 (1976), 269-296.

\bibitem{Bar92}C. B\"ar, 
Lower eigenvalue estimates for Dirac operators, 
Math. Ann. 293:1 (1992), 39-46.

\bibitem{Bar1997} C. B\"ar:
On nodal sets for Dirac and Laplace operators.
Comm. Math. Phys. 188:3 (1997), 709-721.

\bibitem{Bar98} C. B\"ar,
Extrinsic bounds for eigenvalues of the Dirac operator, 
Ann. Global Anal. Geom. 16 (1998), 573-596.

\bibitem{Ba-Xu-JFA21}T. Bartsch, T. Xu,
A spinorial analogue of the Brezis-Nirenberg theorem involving the critical Sobolev exponent,
J. Funct. Anal. 280 (2021), 108991.

\bibitem{BG}J.-P. Bourguignon, P. Gauduchon,
Spineurs, op\'{e}rateurs de Dirac et variations de m\'{e}triques, 
Comm. Math. Phys. 144:3 (1992), 581-599.

\bibitem{BJS}B. Buffoni, L. Jeanjean, C.A. Stuart, 
Existence of a nontrivial solution to a strongly indefinite semilinear equation,
Proc. Amer. Math. Soc. 119:1 (1993), 179-186.

\bibitem{Cao}J. Cao,
The Existence of Generalized Isothermal Coordinates for Higher Dimensional Riemannian Manifolds,
Trans. Amer. Math. Soc. 324:2 (1991), 901-920.

\bibitem{CL}A. Castro, A.C. Lazer,
Applications of a max-min principle,
Rev. Colombiana Mat., 10 (4) (1976), pp. 141-149.

\bibitem{Fredrich98} T. Friedrich, 
On the spinor representation of surfaces in Euclidean 3-space, 
J. Geom. Phys. 28 (1998), 143-157.

\bibitem{Friedrich00}T. Friedrich,
Dirac Operators in Riemannian Geometry,
Graduate Studies in Mathematics 25 (2000).

\bibitem{Gunther}M. G\"unther, 
Conformal normal coordinates,
Ann. Global Anal. Geom. 11 (1993), 173-184 . 

\bibitem{Hermann10}A. Hermann, 
Generic metrics and the mass endomorphism on spin three-manifolds,
Ann. Global Anal. Geom. 37 (2010), 163-171. 

\bibitem{Hij86} O. Hijazi,
A conformal lower bound for the smallest eigenvalue of the Dirac operator and Killing spinors,
Comm. Math. Phys. 104 (1986), 151-162.

\bibitem{Isobe11}T. Isobe, 
Nonlinear Dirac equations with critical nonlinearities on compact spin manifolds,
J. Funct. Anal. 260 (2011), 253-307.

\bibitem{Isobe13}T. Isobe,
A perturbation method for spinorial Yamabe type equations on $S^m$ and its applications,
Math. Ann. 355 (2013), 1255-1299.

\bibitem{Is-Xu}T. Isobe, T. Xu,
Solutions of  Spinorial Yamabe-type	Problems on $S^m$: Perturbations and Applications, 
Trans. Amer. Math. Soc. 376:9 (2023), 6397-6446.

\bibitem{KS96} R. Kusner, N. Schmitt, 
The spinor representation of surfaces in space, preprint, http://www.arxiv.org/abs/dg-ga/9610005 (1996).

\bibitem{LM}H. B. Lawson, M. L. Michelson,
Spin Geometry,
Princeton University Press (1989).
	
	
\bibitem{LeeParker}J.M. Lee, T. Parker,
The Yamabe problem, 
Bull. Amer. Math. Soc. 17 (1987), 37-91.

\bibitem{Lott} J. Lott,
Eigenvalue bounds for the Dirac operator,
Pac. J. Math. 125:1 (1986), 117-126.

\bibitem{Maier97}S. Maier,
Generic metrics and connections on Spin- and Spin$^c$-manifolds,
Comm. Math. Phys. 188:2 (1997), 407-437. 

\bibitem{PS}R.S. Palais, S. Smale, 
A generalized Morse theory,
Bull. Amer. Math. Soc. 70 (1964), 165-171.

\bibitem{Schoen}R. Schoen,
Conformal deformation of a Riemannian metric to constant scalar curvature, 
J. Differ. Geom. 20 (1984), 479-495.

\bibitem{SX2020} Y. Sire, T. Xu,
A variational analysis of the spinorial Yamabe equation on product manifolds,
Ann. Scuola Norm. Sup. Pisa Cl. Sci. 24:5 (2023), 205-248.

\bibitem{SW2010}A. Szulkin, T. Weth,
The method of Nehari manifold,
Handbook of nonconvex analysis and applications, 597–632, Int. Press, Somerville, MA, 2010.

\bibitem{Trudinger}N. Trudinger, 
Remarks concerning the conformal deformation of Riemannian structures on compact manifolds, 
Ann. Scuola Norm. Sup. Pisa Cl. Sci. 22:4 (1968), 265-274.

\bibitem{Willem}M. Willem, 
Minimax Theorems, Birkh\"{a}user, Boston, 1996.

\bibitem{Xu-CAG}T. Xu,
Conformal embeddings of $S^2\to\mathbb{R}^3$ with prescribed mean curvature: A variational approach, 
Comm. Anal. Geom., to appear.

\bibitem{Yamabe}H. Yamabe,
On a deformation of Riemannian structures on compact manifolds, 
Osaka Math. J. 12 (1960), 21-37.



\end{thebibliography}
\end{document}